\documentclass[a4paper]{article}
\usepackage[textwidth=14.5cm]{geometry}
\usepackage{graphicx}
\usepackage{amsmath,amssymb,amsthm}
\usepackage[bookmarks=false]{hyperref}
\usepackage{authblk}
\newcommand{\til}{\widetilde}
\newcommand{\appu}{z}
\newcommand{\appp}{r}
\newcommand{\eps}{\varepsilon}
\newcommand{\R}{\mathbf R}
\newcommand{\pk}[1]{\mathrm{P}_{#1}}
\newcommand{\dzero}{\delta}
\newcommand{\ch}{s_\delta}
\newcommand{\szero}{s_1}
\newcommand{\snorm}[1]{|#1|_{s}}
\newcommand{\pro}[1]{\widehat #1_h}
\theoremstyle{plain}
\newtheorem{theorem}{Theorem}[section]
\newtheorem{lemma}[theorem]{Lemma}
\theoremstyle{definition}
\newtheorem{remark}[theorem]{Remark}
\theoremstyle{definition}
\newtheorem{hypo}{Hypothesis}
\newcommand{\mone}{$\blacktriangle$}
\newcommand{\mtwo}{\Large$\bullet$\normalsize}
\newcommand{\mthree}{$\blacksquare$}
\newcommand{\mfour}{$\vartriangle$}
\newcommand{\mfive}{\Large$\circ$\normalsize}
\newcommand{\msix}{$\square$}
\begin{document}

\title{A pressure-stabilized projection Lagrange--Galerkin scheme for the transient Oseen problem}
\author{Shinya Uchiumi} 
\affil{
	Department of Mathematics, Gakushuin University, Tokyo 171-8588, Japan\\ 
	\texttt{shinya.uchiumi@gakushuin.ac.jp}
}

\date{November 7, 2021}

\maketitle

\begin{abstract}
We propose and analyze a pressure-stabilized projection Lagrange--Galerkin scheme for the transient Oseen problem. 
The proposed scheme inherits the following advantages from the projection Lagrange--Galerkin scheme.
The first advantage is computational efficiency. 
The scheme decouples the computation of each component of the velocity and pressure.
The other advantage is essential unconditional stability.
Here we also use the equal-order approximation for the velocity and pressure, and add a symmetric pressure stabilization term. 
This enriched pressure space enables us to obtain accurate solutions for small viscosity.
First, we show 
an error estimate for the velocity for small viscosity.
Then we show 
convergence results for the pressure.
Numerical examples of a test problem show higher accuracy of the proposed scheme for small viscosity.

\paragraph{Keywords:}
Transient Oseen problem, Lagrange--Galerkin method, fractional-step projection method, equal-order finite element, symmetric pressure stabilization, dependence on viscosity.
\end{abstract}
%
%
\section{Introduction}
We consider a finite element scheme for the transient Oseen problem, known as a linearization of the Navier--Stokes (NS) problem, with small viscosity.
We need special cares to obtain
accurate numerical solutions even in this linear problem.

We focus on the Lagrange--Galerkin (LG) method, which combines the method of the characteristics and Galerkin method.
The LG method
is a robust numerical technique for solving convection-dominated flow problems. 
It was first developed and analyzed in \cite{DouglasRussell1982,Pironneau1982}, and 
the analysis for the NS problem was improved in \cite{Suli}.
The LG method is also applied to, e.g., natural convection problems \cite{BenitezBermudez2011} and viscoelastic models \cite{LukacovaNonlin,LukacovaLin}.
One advantage of this method is the explicit treatment of the convection term so that the resulting matrix is symmetric.
Moreover,
the scheme is essentially unconditionally stable for the Oseen problem \cite{NotsuTabataOseen2015}.
It means that stability conditions, such as $\Delta t \leq c h^\alpha$, are not needed, where $\Delta t$ is the time increment, $h$ is the mesh size, and $c$ and $\alpha$ are positive constants.
We note that this stability is not influenced by small viscosity for the Oseen problem \cite{NotsuTabataOseen2015}.

One of the main ingredients of this paper is the combination of LG and  fractional-step projection methods. 
See \cite{Guermond2006} for the overview of the projection method.
The main advantage is the computational efficiency that decouples the velocity and pressure.
Achdou and Guermond \cite{AchdouGuermond2000} have proposed a combined scheme of the incremental pressure correction projection and LG method using the inf-sup stable elements for the NS problem. They have derived error estimates for the velocity and pressure when the viscosity constant is 1.
They used the solution of a system of ordinary differential equations (ODEs) as the trajectory map,
which should be approximated in practical computation.
Misawa \cite{Misawa2016} has considered an Euler approximated scheme of \cite{AchdouGuermond2000} and error estimates of the same order have been derived for the velocity and pressure.
However, in \cite{GuermondMinev2003}, instability of the scheme \cite{AchdouGuermond2000} has been observed 
for relatively large time increment and Reynolds number.
There, they have implemented the scheme \cite{AchdouGuermond2000} with some approximation and have computed a flow behind a backward-facing step.
Guermond and Minev \cite{GuermondMinev2003} have developed an LG/projection scheme for the NS problem, which is stable under the same condition, and derived an error estimate for the velocity.
The estimate for the pressure, however, is not known to the best of the author's knowledge.

Here we also focus on the dependence on the inverse of the viscosity.
The above estimates are of the forms $c(\Delta t^\alpha + h^\beta)$.
We note that the constant $c$ contains not only a Sobolev norm of the exact solution, $\|(u,p)\|_{X_1}$ 
but also a norm multiplied by the inverse of the viscosity, e.g., $\nu^{-1} \|(u,p)\|_{X_2}$.
The effect of $\nu^{-1}$ in the latter term appears even when the exact solution does not show sharp boundary layers.
See a recent survey \cite{GJN2021kine}.

One choice of eliminating the effect of the inverse of the viscosity is to enhance  the divergence-free condition (mass conservation) by
the grad-div stabilization \cite{Franca1988}. 
Error analyses independent of
the viscosity were performed for the Stokes problem \cite{OlshanskiiReusken}, 
the transient Oseen problem \cite{Burman2017,deFrutos2016},
and the transient NS problem \cite{DeFrutos2019JSC}
by relying on this term.
However, a drawback is that the grad-div operator creates coupled matrices for the velocity \cite{Linke2013}.

Recently, without the grad-div stabilization, 
dependence on the inverse of the viscosity can be eliminated only by using equal-order pairs of finite elements 
with pressure stabilization, for the transient problems.
Chen and Feng \cite{ChenFeng2017} have analyzed a semi-discrete scheme using the equal-order element with symmetric pressure stabilization for the transient NS problem to derive uniform error estimates with respect to the Reynolds number.
De Frutos et al. \cite{DeFrutos2019IMA} have analyzed a standard Galerkin scheme for the transient NS problem using the equal-order finite elements with local projection stabilization. 
Such estimates also hold for an LG scheme for the transient Oseen equations with the equal-order elements with the stabilization of Brezzi--Pitk\"aranta or its generalization to the higher order element \cite{Uchiumi2019}.

In this paper, we propose a projection/LG scheme using the equal-order element with a pressure stabilization for the transient Oseen problem.
The advantages of computational efficiency and essentially unconditional stability are inherited from the projection/LG scheme.
The projection/LG part is based on Guermond and Minev for the inf-sup stable elements \cite{GuermondMinev2003}.
The pressure stabilized fractional-step projection part is mainly adopted from Burman et al.~\cite{Burman2017}.
See also comments in Subsection \ref{subsec:commentOnScheme} below.
Firstly, 
we derive error estimates for the velocity 
in $L^2$-norm of order $\Delta t + h^k$
independent of the inverse of the viscosity,
where $k$ is the degree of piecewise polynomials.
Then,
we show an error estimate of order $\Delta t + h^k$ for the pressure, which may depend on the viscosity.
It is worth noting that even viscosity-dependent pressure estimates in the Oseen framework
have not been obtained for the scheme \cite{GuermondMinev2003} to the best of the author's knowledge.
The technical difficulty is, as in \cite{deFrutos2016,DeFrutos2019JSC,Garcia-archilla2020}, 
the estimate of the time difference of the velocity.

We mention related works.
Burman et al.~\cite{Burman2017} developed and analyzed a projection scheme for the Oseen problem.
They used the equal-order elements with the continuous interior penalty method, and with terms including the grad-div and pressure stabilization.
Robust error estimates with respect to the Reynolds numbers are derived for the velocity and a time-average of the pressure.
The order is $\Delta t + h^{k+1/2}$ in $L^2$-norm for the velocity.
The optimal estimate of order $\Delta t+h^{k+1}$ independent of the viscosity is not known so far \cite{GJN2021kine}.
De Frutos et al.~\cite{DeFrutos2019JSC,DeFrutos2019JSCCorri} proposed and analyzed a projection scheme for the NS problem.
They used the inf-sup stable standard Galerkin method with the grad-div stabilization.
They derived viscosity-independent error estimates for the velocity.
It seems difficult to get the viscosity-independent estimate for the pressure with optimal order.
Garc\'ia-Archilla et al.~\cite{Garcia-archilla2020} analyzed the implicit Galerkin scheme with equal-order element and pressure stabilization 
for the NS problem
and derived error estimates independent of the inverse of the viscosity.
Badia and Codina \cite{Badia2007} have developed and analyzed a non-incremental projection scheme  for the NS problem using the equal-order element with a local projection type stabilization.
De Frutos et al.~\cite{deFrutos2018projNS,deFrutos2018projStokes} 
have analyzed 
a projection scheme with non inf-sup stable elements for the Stokes and the NS problem.
The stabilization term is the same as the discretized Laplacian in the pressure Poisson equation,
thus, extra stabilization is not necessary.
In these works \cite{Badia2007,deFrutos2018projNS,deFrutos2018projStokes},
the condition $\Delta t \sim h^2$ is needed,
and the error constant depends on the inverse of the viscosity.

The remainder of the paper is organized as follows.
In the next section we state the Oseen problem
and present a pressure-stabilized projection LG scheme with preparing notation.
In Section \ref{sec:errorEstiSmallVis}
we show error estimates for the velocity with small viscosity
and their proof.
In Section \ref{sec:errorEstiP}
we show an error estimate for the pressure
and its proof.
In Section \ref{sec:numericalResults} we give some numerical results,
where the Taylor--Hood pair and equal-order ones are compared.
In Section \ref{sec:conclusions} we give conclusions.
In the appendix section we prove a lemma used in the LG methods.

\section{Problem setting and a present scheme}
\subsection{Continuous problem}
We prepare notation used throughout this paper, and state the Oseen problem.

Let $\Omega$ be a polygonal or polyhedral domain of $\R^d ~ (d=2,3)$.
We use the Sobolev spaces $W^{m,p}(\Omega)$ equipped with the norm $\|\cdot\|_{m,p}$ and the seminorm $|\cdot|_{m,p}$ for $p\in [1,\infty]$ and a non-negative integer $m$.  
We denote $W^{0,p}(\Omega)$ by $L^p(\Omega)$.  
The space $W^{1,p}_0(\Omega)$ consists of functions in $W^{1,p}(\Omega)$ whose traces vanish on the boundary of $\Omega$.
When $p=2$, we denote $W^{m,2}(\Omega)$ by $H^m(\Omega)$ and drop the subscript $2$ in the corresponding norm and seminorm.
For the vector-valued function $w\in W^{1,\infty}(\Omega)^d$ we define the seminorm $|w|_{1,\infty}$ by
\begin{equation*}
\biggl\lVert\biggl[\sum_{i,j=1}^d \left( \frac{\partial w_i}{\partial x_j} \right)^2 \biggr]^{1/2}\biggr\rVert_{0,\infty}.
\end{equation*}
The pair of parentheses $(\cdot , \cdot)$ shows the $L^2(\Omega)^i$-inner product for $i=1, d$ or $d\times d$.
The space $L^2_0(\Omega)$ consists of functions $q \in L^2(\Omega)$ satisfying $(q,1)=0$. 
The dual space of $H^1_0(\Omega)$ is denoted by $H^{-1}(\Omega)$ with the norm $\| \cdot \|_{-1}$. 
We also use the notation $|\cdot|_{m,K}$ and $(\cdot,\cdot)_K$ for the seminorm and the inner product on a set $K$, respectively.

Let $T>0$ be a time.
For a Sobolev space $X(\Omega)^i$, $i=1, d$,  
we use the abbreviations 
$H^m(X)=H^m(0,T;X(\Omega)^i)$ and $C(X)=C([0,T];X(\Omega)^i)$.
We define the function space $Z^m$ by
\begin{equation*}
\begin{split}
	Z^m &:= \{ v\in H^j(0,T; H^{m-j}(\Omega)^d); ~ j=0,\dots,m, \| v \|_{Z^m}<\infty \},\\
	\| v \|_{Z^m} &:= 
	\biggl( \sum_{j=0}^m \|v\|_{H^j(0,T;H^{m-j}(\Omega)^d)}^2 \biggr) ^{1/2}.
\end{split}
\end{equation*}
We also use the notation $H^m(t_1, t_2; X)$ and $Z^m(t_1,t_2)$ for spaces on a time interval $(t_1, t_2)$.

We consider the Oseen problem: find
$(u,p):\Omega \times (0,T) \to \R^d \times \R$ such that
\begin{equation}\label{Os} 
\begin{split}
\frac{\partial u}{\partial t}+(w \cdot \nabla) u 
- \nu \Delta u + \nabla p = f & \quad \text{in} \quad \Omega \times (0,T),\\
\nabla \cdot u = 0 & \quad \text{in} \quad \Omega \times (0,T),\\
u = 0 & \quad \text{on} \quad \partial \Omega \times (0,T),\\
u(\cdot,0) = u^0 & \quad \text{in} \quad \Omega,
\end{split}
\end{equation}
where 
$\partial \Omega$ represents the boundary of $\Omega$,
the constant $\nu (>0)$ represents a viscosity, and
$w, f:\Omega \times (0,T) \to \R^d$ and
$u^0:\Omega \to \R^d$ are given functions.
We assume $w=0$ on $\partial \Omega$.

We define the bilinear forms $a$ on $H^1_0(\Omega)^d\times H^1_0(\Omega)^d$ 
by 
\begin{equation*}
	a(u,v) := \nu(\nabla u,\nabla v).
\end{equation*}
Then, we can write the weak form of \eqref{Os} as follows:  
find $(u,p) : (0,T)\to H^1_0(\Omega)^d \times L^2_0(\Omega)$ such that for $t\in (0,T)$, 
\begin{subequations}\label{eq:Osweak}
\begin{alignat}{2}
	\left( \Bigl( \frac{\partial u}{\partial t} + (w \cdot \nabla) u \Bigr) (t),v\right) + a(u(t),v)
	+ (\nabla p(t), v) 
	&= (f(t),v),  
	&& ~ \forall v\in H^1_0(\Omega)^d,  \label{eq:Osweaka} \\
	(\nabla \cdot u(t),q ) &= 0, 
	\quad && \forall q\in L^2_0(\Omega),
\end{alignat}
\end{subequations}
with $u(0)=u^0$.

\subsection{Temporal discretization}
Let 
$\Delta t>0$ be a time increment, 
$N_T := \lfloor T/\Delta t \rfloor$ the number of time steps, 
$t^n := n\Delta t$, and 
$\psi^n := \psi(\cdot,t^n)$ for a function $\psi$ defined in $\Omega \times (0,T)$.

Let $w$ be smooth. 
The characteristic curve $X(t; x,s)$ 
is defined by the solution of the system of the ODEs, 
\begin{equation} \label{eq:xode}
\begin{split}
	\frac{dX}{dt}(t; x,s)&=w(X(t;x,s),t), \quad t<s,\\
	X(s;x,s)&=x.
\end{split}
\end{equation}
Then, we can write the material derivative term $\frac{\partial u}{\partial t}+ (w\cdot \nabla) u$ as follows:
\begin{equation*}
\left(\frac{\partial u}{\partial t} + (w \cdot \nabla) u \right)(X(t),t)
=\frac{d}{dt} u(X(t),t).
\end{equation*}
For $w^*:\Omega \to \R^d$ we define the mapping $X_1(w^*):\Omega \to \R^d$ by
\begin{equation}\label{eq:x1def}
(X_1(w^*))(x) := x - w^*(x)\Delta t.
\end{equation}
\begin{remark}
	The image of $x$ by $X_1(w(\cdot,t))$ is the approximate value 
	of $X(t-\Delta t;x,t)$ obtained by solving \eqref{eq:xode} by the backward Euler method.
\end{remark}
Then, it holds that
\begin{equation*}
\frac{\partial u^n}{\partial t}+(w^n\cdot \nabla) u^n = \frac{u^n-u^{n-1}\circ X_1(w^{n-1})}{\Delta t} + O(\Delta t),
\end{equation*}
where the symbol $\circ$ stands for the composition of functions, 
e.g., $(g\circ f)(x) := g(f(x))$.

\subsection{Spatial discretization}
Let $\{\mathcal T_h\}_{h \downarrow 0}$ be a regular family of triangulations of $\overline \Omega$ \cite{Ciarlet}, $h_K:=\operatorname{diam}(K)$ for an element $K\in \mathcal T_h$, and $h:= \max_{K\in \mathcal T_h} h_K$. 
For a positive integer $m$, the finite element space of order $m$ is defined by
\begin{equation*} 
	W_h^{(m)} := \{ \psi_h \in C(\overline \Omega); ~ \psi_{h|K} \in \pk m(K), ~ \forall K \in \mathcal T_h \},
\end{equation*}
where $\pk m(K)$ is the set of polynomials on $K$ whose degrees are equal to or less than $m$.
For a pair of positive integers $(k, \ell)$, we define 
$\pk{k}/\pk{\ell}$-finite element space
by 
\[
	V_h \times Q_h := [W_h^{(k)} \cap H_0^1(\Omega)]^d \times [W_h^{(\ell)} \cap L_0^2(\Omega)].
\]
The space $Y_h:=V_h + \nabla Q_h$
is also used in the projection method \cite{Guermond1996}.
We denote by $i_h^T$ the $L^2$-projector from $Y_h$ to $V_h$, 
which is the dual operator of $i_h$, the injection from $V_h$ to $Y_h$.

For the equal order $\pk{k}/\pk{k}$-element we use a symmetric positive semidefinite bilinear form $\szero: Q_h \times Q_h \to \R$ for stabilization,
which is specified in Hypothesis \ref{hypo:stab1} below.
A typical example 
is
\begin{equation}
\begin{split}
	\szero(p_h, q_h) := \sum_{K\in \mathcal T_h} h_K^{2k} \sum_{|\alpha|=k} (D^\alpha p_h, D^\alpha q_h)_K,
\end{split}
\label{eq:stabBPB}
\end{equation}
which is the stabilization by Brezzi and Pitk\"aranta \cite{BrezziPitkaranta} for the $\pk1/\pk1$-element
and its extension to higher order elements \cite{Burman2008}.
We note that $\szero$ does not include the viscosity or a stabilization parameter.
Practically a non-negative parameter $\dzero$ is included in the stabilization term as follows:
\[
	\ch (p_h, q_h) := \dzero \szero(p_h, q_h).
\]

\subsection{Present scheme}
We are now in position to define our pressure-stabilized projection LG scheme called Scheme($k, \ell, \dzero$).
Let $\mathcal T_h$, $\Delta t$, integers $k, \ell \geq 1$ and a real number $\dzero \geq 0$
be given.

{\bf Scheme($k, \ell, \dzero$)}:
Let $V_h \times Q_h$ be
the $\pk{k}/\pk{\ell}$-finite element space on $\mathcal T_h$.
Let $(u_h^0, p_h^0) \in Y_h \times Q_h$ be given.
Find $(\til u_h^n, u_h^n, p_h^n) \in V_h \times Y_h \times Q_h$,
$n=1,...,N_T$ such that
for $n=0,1,...,N_T-1$
\begin{subequations} \label{eq:mainscheme}
\begin{align}
	\biggl( \frac{\til u_h^{n+1} - (i_h^T u_h^{n}) \circ X_1(w^{n})}{\Delta t}, v_h \biggr) 
	+ a(\til u_h^{n+1} ,v_h) + (\nabla p_h^{n}, v_h) &= (f^{n+1}, v_h), 
	 &&\forall v_h \in V_h, 
	\label{subeq:momentum}\\
	\frac{u_h^{n+1} - \til u_h^{n+1}}{\Delta t} + \nabla(p_h^{n+1} - p_h^{n}) &= 0, &&
	\label{subeq:yh}\\
	(u_h^{n+1}, \nabla q_h) - \ch(p_h^{n+1}, q_h) &= 0, 
	&&\forall q_h \in Q_h. \label{subeq:stab}
\end{align}
\end{subequations}

We later see in Lemma \ref{lemm:bijectiveMix} that,
for each $w^n \in W^{1,\infty}_0(\Omega)^d$ and
under the condition $\Delta t |w^n|_{1,\infty} < 1$,
the inclusion $(X_1(w^n))(\Omega) \subset \Omega$ holds.
Thus the composite function $(i_h^T u_h^n) \circ X_1(w^n)$ is well-defined on $\Omega$.

While we use $i_h^T u_h^n$ in the practical scheme,
the variable $u_h^{n+1}$ is eliminated.
For $n \geq 1$,
we obtain the following practical scheme
using the expression of $u_h^{n}$ in \eqref{subeq:yh},
testing \eqref{subeq:yh} with $\nabla q_h$ and using \eqref{subeq:stab}.

Stage 1: find $i_h^T u_h^n \in V_h$ such that
\begin{equation}
	(i_h^T u_h^n, v_h) = (\til u_h^n - \Delta t \nabla(p_h^n - p_h^{n-1}), v_h), \quad \forall v_h \in V_h.
	\label{eq:schemePracProj}
\end{equation}
Stage 2: find $\til u_h^{n+1} \in V_h$ such that
\begin{alignat}{2}
	\frac{1}{\Delta t} (\til u_h^{n+1}, v_h) + a(\til u_h^{n+1} ,v_h) 
	= \frac{1}{\Delta t} \left( (i_h^T u_h^n) \circ X_1(w^{n}) , v_h \right) 
	- \left( \nabla p_h^{n}, v_h \right)  
	+ (f^{n+1}, v_h), \notag \\
	\quad \forall v_h \in V_h. \label{eq:schemePraca}
\end{alignat}
Stage 3: find $p_h^{n+1} \in Q_h$ such that
\begin{equation}
	(\nabla p_h^{n+1}, \nabla q_h) 
	+ \frac{1}{\Delta t} \ch(p_h^{n+1}, q_h) = 
	(\nabla p_h^{n}, \nabla q_h) 
	+ \frac{1}{\Delta t} (\til u_h^{n+1} ,\nabla q_h),
	\quad \forall q_h \in Q_h.
	\label{eq:schemePracb}
\end{equation}

\subsection{Comments on the scheme}
\label{subsec:commentOnScheme}
This scheme has an advantage in computational cost.
We can decouple the left-hand side of \eqref{eq:schemePracProj}, and \eqref{eq:schemePraca} into each velocity component, respectively, as follows:
\[
	(\til U_{h1}, v_{h1}) + (\til U_{h2}, v_{h2}), 
\]
\[
\Bigl( \frac{1}{\Delta t}(\til u_{h1}^{n+1}, v_{h1})
+ a(\til u_{h1}^{n+1} ,v_{h1}) \Bigr)
+ \Bigl( \frac{1}{\Delta t} (\til u_{h2}^{n+1}, v_{h2})
+ a(\til u_{h2}^{n+1} ,v_{h2}) \Bigr),
\]
where  
$(\til U_{h1}, \til U_{h2}) = i_h^T u_h^n$,
$( \til u_{h1}^{n+1}, \til u_{h2}^{n+1} ) = \til u_h^{n+1}$
and $( v_{h1}, v_{h2}) = v_h$ if $d=2$.
Each part corresponds to the matrix in the discretized Poisson equation with the mass term,
which is easy to handle by linear solvers such as the conjugate gradient method \cite{templates}.
The matrix in \eqref{eq:schemePracb} is the one in the discretized Poisson equation subject to the Neumann boundary condition with the symmetric stabilization term.
In addition to the explicit treatment of the convection term,
we do not need time restriction such as $\Delta t \leq ch^\alpha$ for the error estimates in this paper.

We adopt the framework of Guermond and Minev \cite{GuermondMinev2003}
for the combination of the projection and the LG methods.
Achdou and Guermond \cite{AchdouGuermond2000} also developed 
the combined scheme for the NS equations.
To show the corresponding formulation in \cite{AchdouGuermond2000},
we replace \eqref{subeq:momentum} by
\[
	\biggl( \frac{\til u_h^{n+1} - u_h^{n}}{\Delta t}, v_h \biggr) + \biggl(  \frac{\til u_h^{n} - \til u_h^{n} \circ X_1(w^{n})}{\Delta t}, v_h \biggr)
	+ a(\til u_h^{n+1} ,v_h) + (\nabla p_h^{n}, v_h) = (f^{n+1}, v_h).
\]
Then \eqref{eq:schemePraca} is replaced by the following form without  $i_h^T u_h^n$:
\begin{equation*}
\begin{split}
	\frac{1}{\Delta t} (\til u_h^{n+1}, v_h) + a(\til u_h^{n+1} ,v_h)  
	= \frac{1}{\Delta t}(\til u_h^{n} \circ X_1(w^{n}), v_h)
	- (\nabla(2p_h^{n} - p_h^{n-1}), v_h) 
	+ (f^{n+1}, v_h).
\end{split}
\end{equation*}
However, 
it is difficult to derive the viscosity robust error estimate in Section \ref{sec:errorEstiSmallVis}.

Following Guermond and Minev \cite{GuermondMinev2003}, in \eqref{subeq:momentum}, we do not use the original  $u_h^n \in Y_h$ but the $L^2$-projection $i_h^T u_h^n \in V_h$.
Our error estimates in Sections \ref{sec:errorEstiSmallVis} and \ref{sec:errorEstiP} are also valid if we replace $i_h^T u_h^n$ by $u_h^n$.
However, from the implementation viewpoint, 
we need to integrate the term $([\til u_h^n - \nabla (p_h^{n} - p_h^{n-1}) ] \circ X_1(w^{n}), v_h)$ in view of \eqref{subeq:yh}.

In the scheme of Burman et al.~\cite{Burman2017}, 
the pressure stabilization is defined a functional on $Y_h$.
Here we simply define $\ch$ as the bilinear form on $Q_h$.

Finally, we mention an implementation issue.
It is difficult to compute the term $((i_h^T u_h^n) \circ X_1(w^n), v_h)$ because the integrand is not polynomial on each element.
It is known that rough numerical quadrature leads to instability.
A remedy is to introduce a locally linearized velocity $w_h^n$, i.e., the $\mathrm P_1$-Lagrange interpolation of $w^n$ \cite{TSTeng}.
Then the term 
\begin{equation}
	((i_h^T u_h^n) \circ X_1(w_h^n), v_h)
	\label{eq:intLLV}
\end{equation}
can be exactly computable.
The error estimates of this paper can be done with the Lagrange interpolation error of $O(h^2)$ \cite{Tabata2018,Uchiumi2019}.

\section{Error estimates for the velocity with small viscosity}
\label{sec:errorEstiSmallVis}

We use $c$ to represent a
generic positive constant that is independent of $\nu$, $\Delta t$, $h$ and $\dzero$
but depends on Sobolev norms of $w$, $u$ and $p$, and $T$,
and may take a different value at each occurrence.

\subsection{Hypotheses and the main theorem for the velocity}
\label{subsec:orderhypoandmaintheo}

\begin{hypo}\label{hypo:uestiRegSmallvis}
The velocity $w$ and
the exact solution $(u, p)$ of the Oseen problem \eqref{Os} satisfy
\[
	w \in C(W^{1,\infty}_0) \cap H^1(L^\infty), \quad	
	u \in Z^2 \cap H^1(H^k) \cap C(H^{k+1}), \quad
	p \in H^1(H^1) \cap C(H^{k+1}).
\]
\end{hypo}

\begin{hypo} \label{hypo:dt}
The time increment $\Delta t$ satisfies $0<\Delta t \leq \Delta t_0$, where
\[
	\Delta t_0 := \frac{1}{4 |w|_{C(W^{1,\infty})}}.
\]
\end{hypo}

\begin{hypo} \label{hypo:stab1}
The bilinear form $\szero$ satisfies the following conditions.
\begin{enumerate}
\item \label{item:stab1-sym}
$\szero: Q_h \times Q_h \to \R$
is a symmetric and positive semidefinite bilinear form.
\item For all $q_h \in Q_h$, 
\[
	\szero(q_h, q_h) \leq c \|q_h\|_0^2.
\]
\item \label{item:stab1-stabilty}
There exists an operator $\Pi_h: L^2_0(\Omega) \to Q_h$ such that
\begin{align}
	\|q - \Pi_h q\|_{m} &\leq c h^{s+1-m} \|q\|_{s+1}, ~ q \in L^2_0(\Omega) \cap H^{s+1}(\Omega), ~ 0 \leq s \leq k, 
	~ m=0,1.
	\label{eq:s0projapprox} \\
	\szero(\Pi_h q, \Pi_h q)^{1/2} &\leq ch^{s} \| q \|_{s}, ~ q \in L^2_0(\Omega) \cap H^s(\Omega), ~ 0 \leq s \leq k. 
	\label{eq:s0projstability}
\end{align}
\item \label{item:stab1-divergence}
There is an operator $\mathcal I_h^k: V^{\mathrm{div}} \to V_h$ such that 
for all $v \in V^{\mathrm{div}}$ and $q_h \in Q_h$,
\begin{align}
	|(\nabla \cdot (v - \mathcal I_h^k v), q_h)| 
	&\leq c \biggl( \sum_{K \in \mathcal T_h} h_K^{-2} \| v - \mathcal I_h^k v \|_{0, K}^2 + \| v - \mathcal I_h^k v \|_1^2 \biggr)^{1/2} \szero(q_h, q_h)^{1/2}, 
	\label{eq:hypoVQmix}\\
	\| v - \mathcal I_h^{k} v\|_m 
	&\leq c h^{s+1-m} |v|_{s+1}, ~ \forall v \in H^{s+1}(\Omega)^d, 
	~  1 \leq s \leq k, 
	~ m=0,1.
	\label{eq:hypoVQvinterpo}
\end{align}
Here, $V^{\mathrm{div}} = \{ v \in H^1_0(\Omega)^d; ~ \nabla \cdot v = 0\}$.
\end{enumerate}
\end{hypo}

In view Hypothesis \ref{hypo:stab1}-(\ref{item:stab1-sym}),
we define the seminorm by
\[
	\snorm{q_h} = \szero(q_h, q_h)^{1/2}, \quad \forall q_h \in Q_h.
\]
Then, the Schwarz inequality holds:
\[
	\szero (q_h, r_h) \leq \snorm{q_h} \snorm{r_h}, \quad \forall q_h, r_h \in Q_h.
\]
From \eqref{eq:hypoVQmix} and \eqref{eq:hypoVQvinterpo} we easily get
\begin{equation} 
	|(\nabla \cdot (v - \mathcal I_h^k v), q_h)|  
	\leq ch^k |v|_{k+1} \snorm{q_h} \quad \forall v \in V^{\mathrm{div}} \cap H^{k+1}(\Omega)^d, q_h \in Q_h.
	\label{eq:hypo1practical}
\end{equation}

The term $\szero$ in \eqref{eq:stabBPB} and $\pk{k}/\pk{k}$-element ($k \geq 1$) space $V_h \times Q_h$ satisfy Hypothesis \ref{hypo:stab1} with $\Pi_h$ being the Cl\'ement interpolation \cite{Clement},
and $\mathcal I_h^k$ being a modified Stokes projection \cite{deFrutos2016} when $k\geq 2$, or Lagrange interpolation when $k=1$.
See \cite{Garcia-archilla2020,Uchiumi2019}.
We note that the constant does not depend on the viscosity.

\begin{remark}
Hypothesis \ref{hypo:stab1} is mainly adopted from \cite{Garcia-archilla2020} although \eqref{eq:s0projapprox} is slightly stronger.
As pointed out there, these assumptions are quite similar to those in \cite{BurmanFernandez2008}.
We refer to \cite{Garcia-archilla2020} for other stabilization satisfying Hypothesis \ref{hypo:stab1}.
\end{remark}

\begin{hypo}\label{hypo:initSmallVis}
The initial value $(u_h^0, p_h^0)$ is chosen so that 
there exists a positive constant $c$ independent of $h$ such that
\[
	\| u_h^0 - u^0 \|_0 \leq c h^k, \quad
	\| \nabla (p_h^0 - p^0) \|_0 \leq c.
\]
\end{hypo}

For a set of functions $\psi=\{ \psi^n \}_{n=0}^{N_T}$
we use two norms 
$\|\cdot\|_{\ell^\infty(L^2)}$ and $\|\cdot\|_{\ell^2(L^2)}$
and a seminorm $|\cdot|_{\ell^2(s)}$
defined by
\begin{equation}
\begin{split}
\|\psi\|_{\ell^\infty(L^2)} & := 
\max \left\{ \|\psi^n\|_{  0  };n=0,\dots ,N_T \right\}, \\
\|\psi\|_{\ell^2(L^2)} & := 
\biggl(
\Delta t \sum_{n=1}^{N_T} \|\psi^n\|_{0}^2 
\biggr)^{1/2}, \quad
|\psi|_{\ell^2(s)} := \biggl(
\Delta t \sum_{n=1}^{N_T} | \psi^n |_{s}^2 
\biggr)^{1/2}.
\end{split}
\label{eq:discNorms}
\end{equation}

\begin{theorem}\label{thm:veloSmallVis}
Let $(u_h, p_h):=\{(u_h^n, p_h^n)\}_{n=0}^{N_T}$ be the solution of 
Scheme$(k, k, \dzero)$ 
with $k \geq 1$ and $\dzero >0$.
Assume Hypothesis \ref{hypo:uestiRegSmallvis}, 
\ref{hypo:dt}, \ref{hypo:stab1}, \ref{hypo:initSmallVis}.
Then the following estimate holds:
\begin{equation}
\begin{split}
	& \| u_h - u\|_{\ell^\infty(L^2)}, ~
	\| \til u_h - u\|_{\ell^\infty(L^2)}, ~
	\nu^{1/2} \| \nabla(\til u_h - u ) \|_{\ell^2(L^2)}, ~
	\dzero^{1/2} |p_h - \Pi_h p|_{\ell^2(s)}
	\\
	\leq & c( 1 + \nu^{1/2} + \dzero^{1/2} + \dzero^{-1/2})(\Delta t + h^k).
\end{split}
	\label{eq:velocitySmallVis}
\end{equation}
Here, $\Pi_h p$ is the interpolation in Hypothesis \ref{hypo:stab1}.
\end{theorem}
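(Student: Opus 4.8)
The plan is to track three discrete errors at once: the interpolation-corrected intermediate-velocity error $e^n := \til u_h^n - \mathcal I_h^k u^n$, the end-of-step error $E^n := u_h^n - \mathcal I_h^k u^n$, and the pressure error $\theta^n := p_h^n - \Pi_h p^n$, where $\mathcal I_h^k$ is the divergence-compatible interpolation of Hypothesis \ref{hypo:stab1}-(\ref{item:stab1-divergence}) and $\Pi_h$ is that of part (\ref{item:stab1-stabilty}). The remainders $\eta^n := u^n - \mathcal I_h^k u^n$ and $\rho^n := p^n - \Pi_h p^n$ are controlled by \eqref{eq:hypoVQvinterpo}, \eqref{eq:hypo1practical} and \eqref{eq:s0projapprox}. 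First I would insert the exact solution into \eqref{subeq:momentum} at $t^{n+1}$, using the Lagrange--Galerkin consistency identity from the appendix lemma (which needs $u\in Z^2$ from Hypothesis \ref{hypo:uestiRegSmallvis}) to write $\partial_t u^{n+1}+(w^{n+1}\cdot\nabla)u^{n+1}=(u^{n+1}-u^n\circ X_1(w^n))/\Delta t+R^{n+1}$ with $\Delta t\sum\|R^{n+1}\|_0^2\le c\Delta t^2$. Eliminating $\til u_h^{n+1}$ through \eqref{subeq:yh} collapses $\nabla p_h^n+\nabla(p_h^{n+1}-p_h^n)$ into $\nabla p_h^{n+1}$ and yields the single error identity
\begin{equation*}
\Bigl(\tfrac{E^{n+1}-(i_h^T E^n)\circ X_1(w^n)}{\Delta t},v_h\Bigr)+a(e^{n+1},v_h)+(\nabla\theta^{n+1},v_h)=\langle G^{n+1},v_h\rangle,\quad\forall v_h\in V_h,
\end{equation*}
where $i_h^T E^n=i_h^T u_h^n-\mathcal I_h^k u^n$ and $G^{n+1}$ gathers the interpolation, pressure-in-time and characteristic remainders. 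Tested against $q_h\in Q_h$, \eqref{subeq:stab} and $\nabla\cdot u^{n+1}=0$ give the companion identity $(E^{n+1},\nabla q_h)=-(\nabla\cdot\eta^{n+1},q_h)+\ch(p_h^{n+1},q_h)$.

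The core is an energy estimate with $v_h=2\Delta t\,e^{n+1}$, summed over $n$. The viscous term supplies $2\nu\Delta t\|\nabla e^{n+1}\|_0^2$. For the material-derivative term I would use $2(a-b,a)=\|a\|_0^2-\|b\|_0^2+\|a-b\|_0^2$ with $b=(i_h^T E^n)\circ X_1(w^n)$, the stability bound $\|b\|_0^2\le(1+c\Delta t)\|E^n\|_0^2$ from the appendix lemma (valid under Hypothesis \ref{hypo:dt}, which forces $\Delta t|w^n|_{1,\infty}<1$ and $X_1(w^n)(\Omega)\subset\Omega$), and the projection relation $e^{n+1}=E^{n+1}+\Delta t\nabla(p_h^{n+1}-p_h^n)$ together with the companion identity. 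A short computation shows that the $2\Delta t\,(E^{n+1},\nabla(p_h^{n+1}-p_h^n))$ contributions cancel and the time term reduces to $\|E^{n+1}\|_0^2-\|b\|_0^2+\|e^{n+1}-b\|_0^2-\Delta t^2\|\nabla(p_h^{n+1}-p_h^n)\|_0^2$, so that $\|E^{n+1}\|_0^2-(1+c\Delta t)\|E^n\|_0^2$ telescopes. The pressure coupling $2\Delta t(\nabla\theta^{n+1},e^{n+1})$, expanded through the same two relations, produces the nonnegative dissipation $2\Delta t\,\ch(\theta^{n+1},\theta^{n+1})=2\dzero\Delta t\,\snorm{\theta^{n+1}}^2$, the cross terms $-2\Delta t(\nabla\cdot\eta^{n+1},\theta^{n+1})$ and $2\Delta t\,\ch(\Pi_h p^{n+1},\theta^{n+1})$, and a residual $2\Delta t^2(\nabla\theta^{n+1},\nabla(p_h^{n+1}-p_h^n))$.

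The delicate step is to control the $\Delta t^2$ gradient terms without any inverse power of $\nu$. I would combine the $-\Delta t^2\|\nabla(p_h^{n+1}-p_h^n)\|_0^2$ from the time term with the residual from the pressure coupling through $2(a,g)-\|g\|_0^2=\|a\|_0^2-\|a-g\|_0^2$; since $\theta^{n+1}-(p_h^{n+1}-p_h^n)=\theta^n-\Pi_h(p^{n+1}-p^n)$ this equals $\Delta t^2[\|\nabla\theta^{n+1}\|_0^2-\|\nabla\theta^n-\nabla\Pi_h(p^{n+1}-p^n)\|_0^2]$, which telescopes to the nonnegative endpoint $\Delta t^2\|\nabla\theta^{N}\|_0^2$, a boundary term $\Delta t^2\|\nabla\theta^0\|_0^2$ bounded by Hypothesis \ref{hypo:initSmallVis}, and a term $\Delta t^2\sum\|\nabla\theta^n\|_0^2$ absorbable by discrete Gronwall. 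Hence $\Delta t^2\|\nabla\theta^n\|_0^2$ is carried as part of the Gronwall quantity alongside $\|E^n\|_0^2$. The remaining terms are routine: $(\nabla\cdot\eta^{n+1},\theta^{n+1})$ is bounded by \eqref{eq:hypo1practical} and split by Young's inequality against $\dzero\snorm{\theta^{n+1}}^2$, which is exactly where the factor $\dzero^{-1/2}$ enters, while $\ch(\Pi_h p^{n+1},\theta^{n+1})$ uses \eqref{eq:s0projstability} and contributes the $\dzero^{1/2}$ factor.

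Finally, the data functional $G^{n+1}$ is estimated by pairing its interpolation and pressure-in-time parts with $e^{n+1}$ in $L^2$ (crucially not against $\nu^{1/2}\|\nabla e^{n+1}\|_0$), so that $\nabla\rho^{n+1}$ enters only through $\|\nabla\rho^{n+1}\|_0\|e^{n+1}\|_0=O(h^k)$, while $a(\eta^{n+1},e^{n+1})$ is paired with $\nu^{1/2}\|\nabla e^{n+1}\|_0$ to give the $\nu^{1/2}$ factor; the material-derivative remainder $(\eta^{n+1}-\eta^n\circ X_1(w^n))/\Delta t$ is split into a time increment, estimated by $u\in H^1(H^k)$, and a characteristic displacement, estimated by the appendix lemma. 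The main obstacle is precisely this bookkeeping of the $\Delta t^2$ pressure-gradient terms: keeping $\nu$ out of their estimate forces the telescoping-plus-Gronwall device above rather than an absorption into the viscous norm. After summing and applying the discrete Gronwall inequality to $\|E^n\|_0^2+\Delta t^2\|\nabla\theta^n\|_0^2$, and recovering $\|e^n\|_0$ from $e^n=E^n+\Delta t\nabla(p_h^n-p_h^{n-1})$ together with the now-controlled increment, the four quantities in \eqref{eq:velocitySmallVis} follow by the triangle inequality with $\eta^n$ and $\rho^n$.
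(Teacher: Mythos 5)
Your proposal is correct and follows essentially the same route as the paper's proof: the same interpolants and error trio, the same energy balance (your hand-derived identity for the collapsed single equation, tested with $2\Delta t\, e^{n+1}$, is exactly Lemma \ref{lemm:errorEqArrange} applied to the error system \eqref{eq:erroreq1} — note that your $\theta^n - \Pi_h(p^{n+1}-p^n)$ is precisely the paper's $\psi_h^n = p_h^n - \appp_h^{n+1}$), the same Gronwall quantity $\|E^n\|_0^2 + \Delta t^2\|\nabla\theta^n\|_0^2$, and the same splitting of the stabilization cross terms and of $a(\eta^{n+1},\cdot)$ that produces the $\dzero^{1/2}$, $\dzero^{-1/2}$ and $\nu^{1/2}$ factors. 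One cosmetic slip: the LG consistency estimate and the bound $\|(i_h^T E^n)\circ X_1(w^n)\|_0^2 \leq (1+c\Delta t)\|E^n\|_0^2$ come from Lemmas \ref{lemm:estR1} and \ref{lemm:bijectiveMix} (whose proofs are cited, not given in the appendix); the appendix proves only Lemma \ref{lemm:dtR11R12}, which is needed for the pressure estimate of Theorem \ref{thm:pres}, not here.
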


\subsection{Preliminaries for the velocity estimates} 
\label{subsec:prelV}
We use the techniques developed in the finite element projection method \cite{Guermond1996,Guermond1998}, 
LG method \cite{RuiTabata2002,Tabata2018},
combined method \cite{AchdouGuermond2000,GuermondMinev2003},
pressure-stabilized method \cite{BurmanFernandez2008,FrancaStenberg1991},
and projection with pressure-stabilized method \cite{Burman2017}.

For a function $F$ defined on $[0,T]$,
or sequence of functions $F = \{F^n\}_{n=0}^{N_T}$,
\begin{align}
	d_t F &:= F - F(\cdot - \Delta t), \text{ or} \notag \\
	d_t F^n &:= F^n - F^{n-1}.
	\label{eq:dtdef}
\end{align}
Let $F \in H^1(X)$ or $H^2(X)$ for a Banach space $X(\Omega)^i$, $i=1,d$.
The following inequalities are frequently used:
\begin{align}
	\| d_t F^n \|_X &\leq \Delta t^{1/2} \Bigl\| \frac{\partial F}{\partial t} \Bigr\|_{L^2(t^{n-1}, t^n; X)}, 
	\label{eq:dthalfest}
	\\
	\| d_t F^n - d_t F^{n-1}\|_X & \leq c \Delta t^{3/2} \Bigl\| \frac{\partial^2 F}{\partial t^2} \Bigr\|_{L^2(t^{n-2}, t^n; X)}.
	\label{eq:dtsechalfest}
\end{align}

First we recall a discrete version of the Gronwall inequality.
\begin{lemma}[discrete Gronwall inequality] 
\label{lem:discreteGronwall}
Let $\gamma_1$ be a non-negative number, $\Delta t$ be a positive number,
and $\{x^n\}_{n\geq n_0}, \{y^n\}_{n\geq n_0+1}$ and $\{b^n\}_{n\geq n_0+1}$ be non-negative sequences.
Suppose
\begin{equation*}
	\frac{x^n-x^{n-1}}{\Delta t} + y^n 
	\leq 
	\gamma_1 x^{n-1} + b^n, ~ \forall n \geq n_0+1.
\end{equation*}
Then, it holds that 
\begin{equation*}
	x^n + \Delta t \sum_{i=n_0+1}^n y^i 
	\leq 
	\exp [ \gamma_1 (n-n_0)\Delta t ]
	\left( x^{n_0} + \Delta t \sum_{i=n_0+1}^n b^i \right), 
	~ \forall n \geq n_0+1.
\end{equation*}
\end{lemma}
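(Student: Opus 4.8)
The plan is to reduce the stated inequality to a scalar linear recurrence and then iterate it. First I would rewrite the hypothesis in the equivalent ``one-step'' form by multiplying through by $\Delta t$ and moving $x^{n-1}$ to the right, obtaining
\[
 x^n + \Delta t\, y^n \le (1+\gamma_1\Delta t)\, x^{n-1} + \Delta t\, b^n, \qquad \forall n \ge n_0+1.
\]
The crucial structural feature is that $\gamma_1$ multiplies the \emph{explicit} term $x^{n-1}$, so that no smallness condition on $\Delta t$ is needed and the coefficient $a := 1+\gamma_1\Delta t$ satisfies $a \ge 1$.

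The main idea is to work not with $x^n$ alone but with the accumulated quantity that appears in the conclusion, namely $X^n := x^n + \Delta t\sum_{i=n_0+1}^n y^i$, and to show that $X^n$ itself obeys the clean recurrence $X^n \le a\,X^{n-1} + \Delta t\, b^n$. Indeed, splitting off the last term $\Delta t\, y^n$ from the sum and inserting the one-step inequality gives $X^n \le a\,x^{n-1} + \Delta t\, b^n + \Delta t\sum_{i=n_0+1}^{n-1} y^i$; then, since $a \ge 1$ and each $y^i \ge 0$, the trailing sum is bounded by $a\,\Delta t\sum_{i=n_0+1}^{n-1} y^i$, so the right-hand side is at most $a\,X^{n-1} + \Delta t\, b^n$. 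This absorption step, which uses both the non-negativity of $\{y^n\}$ and the fact that $a \ge 1$, is the only point requiring any care: it is what allows the dissipative sum of $y^i$ to be retained on the left rather than simply discarded, as one would do in the more familiar form of the inequality.

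Once the recurrence for $X^n$ is in hand, I would iterate it from $n_0$ (where $X^{n_0} = x^{n_0}$, the sum being empty) to obtain
\[
 X^n \le a^{\,n-n_0}\, x^{n_0} + \Delta t\sum_{i=n_0+1}^n a^{\,n-i}\, b^i .
\]
Finally, using $a^{\,n-i} \le a^{\,n-n_0}$ for $i \ge n_0+1$ together with the non-negativity of $\{b^n\}$, I factor out $a^{\,n-n_0}$ and bound it by $\exp[\gamma_1(n-n_0)\Delta t]$ via the elementary inequality $1+\xi \le e^{\xi}$, yielding exactly the asserted estimate. The whole argument is elementary; the only genuinely substantive idea is the introduction of the aggregate $X^n$ and the absorption of the accumulated $y$-sum, while the base case $n=n_0+1$ is just the one-step inequality itself.
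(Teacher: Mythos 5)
Your proposal is correct and takes essentially the same approach as the paper: the paper's (very terse) proof rests on the one-step inequality $x^n + y^n\Delta t \le (1+\gamma_1\Delta t)\,x^{n-1} + b^n\Delta t \le \exp(\gamma_1\Delta t)(x^{n-1}+b^n\Delta t)$, and its implicit iteration is precisely your absorption-and-iterate argument for the aggregate $X^n$, relying on the coefficient being $\ge 1$ and the non-negativity of $\{y^n\}$ to retain the $y$-sum on the left. The only cosmetic difference is that you carry the factor $1+\gamma_1\Delta t$ through the recursion and apply $1+\xi\le e^{\xi}$ once at the end, whereas the paper passes to $\exp(\gamma_1\Delta t)$ at each step.
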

Lemma \ref{lem:discreteGronwall} is shown by using the inequalities
\begin{equation*}
\begin{split}
	x^n + y^n \Delta t 
	&\leq (1+\gamma_1 \Delta t) x^{n-1} + b^n \Delta t  
	\leq \exp(\gamma_1\Delta t)(x^{n-1}+b^n\Delta t).
\end{split}
\end{equation*}
Instead of the well-known summation form of the discrete Gronwall inequality, e.g., in \cite{HeywoodRannacher1990}, we use this form 
because the condition on $\Delta t$ does not include $\gamma_1$, making the proof simpler.

We prepare fundamental properties of the mapping $X_1(w^*)$ for $w^*\in W^{1,\infty}_0(\Omega)^d$.
We refer to \cite{RuiTabata2002,Tabata2018} for the proofs.

\begin{lemma}
\label{lemm:bijectiveMix}
Let $w^*\in W^{1,\infty}_0(\Omega)^d$ and $X_1(w^*)$ be the mapping defined in \eqref{eq:x1def}.
\begin{enumerate}
\item \label{item:bijective}
Under the condition $\Delta t |w^*|_{1,\infty} < 1$,
it holds that $(X_1(w^*))(\Omega) \subset \Omega$ and $X_1(w^*): \Omega \to \Omega$ is bijective.
\item \label{item:jacobiest}
Under the condition $\Delta t |w^*|_{1,\infty} \leq 1/4$,
the estimate 
\begin{equation*}
	\frac{1}{2} \leq 
	J
	\leq \frac{3}{2}
\end{equation*}
holds,
where $J$ is the Jacobian of $X_1(w^*)$.
\item Under the condition $\Delta t|w^*|_{1,\infty} \leq 1/4$,
there exists a positive constant $c$
independent of $\Delta t$
such that for $v \in L^2(\Omega)^d$ 
\begin{equation*}
	\| v \circ X_1(w^*) \|_{0}^{2}
	\leq (1+ c |w^*|_{1,\infty} \Delta t) \|v\|_0^{2}.
\end{equation*}
\end{enumerate}
\end{lemma}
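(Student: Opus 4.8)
The plan is to treat all three parts through the zero-extension of $w^*$ to $\R^d$. Since $w^* \in W^{1,\infty}_0(\Omega)^d$ and $\Omega$ is a polygonal (hence Lipschitz) domain, $w^*$ is Lipschitz on $\overline\Omega$ with vanishing trace, so its extension $\bar w^*$ by zero lies in $W^{1,\infty}(\R^d)^d$ with the same seminorm $|\bar w^*|_{1,\infty} = |w^*|_{1,\infty}$; because the operator norm of a matrix is bounded by its Frobenius norm, $\bar w^*$ is globally Lipschitz with $|\bar w^*(x)-\bar w^*(y)| \le |w^*|_{1,\infty}\,|x-y|$ for all $x,y \in \R^d$. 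I would set $\bar\Phi(x) := x - \Delta t\,\bar w^*(x)$, which equals the identity on $\R^d \setminus \Omega$ (using $\bar w^*=0$ outside $\overline\Omega$ and the vanishing trace on $\partial\Omega$).

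For part (\ref{item:bijective}) I would first obtain injectivity from $|\bar\Phi(x)-\bar\Phi(y)| \ge (1 - \Delta t|w^*|_{1,\infty})|x-y|$, which is strictly positive for $x\ne y$ under $\Delta t|w^*|_{1,\infty}<1$. For surjectivity onto $\Omega$, fix $y \in \Omega$ and apply the Banach fixed-point theorem to the contraction $x \mapsto y + \Delta t\,\bar w^*(x)$ on $\R^d$, whose contraction constant is $\Delta t|w^*|_{1,\infty}<1$; its unique fixed point $x^\ast$ cannot lie in $\R^d\setminus\overline\Omega$ (there $\bar w^*=0$ forces $x^\ast=y\in\Omega$, a contradiction) nor on $\partial\Omega$, so $x^\ast\in\Omega$ and $\bar\Phi(x^\ast)=y$. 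Running the same fixed-point argument for every $y\in\R^d$ shows $\bar\Phi$ is a bijection of $\R^d$ that is the identity on $\R^d\setminus\Omega$; hence it must map $\Omega$ onto $\Omega$, giving $(X_1(w^*))(\Omega)=\Omega$ and bijectivity of $X_1(w^*):\Omega\to\Omega$.

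For part (\ref{item:jacobiest}), the Jacobian matrix is $DX_1(w^*) = I - \Delta t\, Dw^*$, and $\|\Delta t\,Dw^*\|_F \le \Delta t|w^*|_{1,\infty} \le 1/4$. I would expand the determinant $J=\det(I-\Delta t\,Dw^*)$ as $1 - \Delta t\,\mathrm{tr}(Dw^*)$ plus terms that are at least quadratic in $\Delta t\,Dw^*$, bound the trace by $\sqrt d\,\|\Delta t\,Dw^*\|_F$ and the higher elementary-symmetric terms by powers of $\|\Delta t\,Dw^*\|_F$, and thereby obtain the sharper one-sided estimate $J \ge 1 - c\,\Delta t|w^*|_{1,\infty}$ together with the two-sided bound $1/2 \le J \le 3/2$ once $\Delta t|w^*|_{1,\infty}\le 1/4$. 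Part (3) then follows from the Lipschitz change of variables $y=X_1(w^*)(x)$, legitimate because $X_1(w^*)$ is a bi-Lipschitz bijection of $\Omega$ by (\ref{item:bijective}): writing $\|v\circ X_1(w^*)\|_0^2 = \int_\Omega |v(y)|^2\,J^{-1}\,dy$, with $J^{-1}$ evaluated at the preimage, and inserting $J^{-1}\le 1 + c\,\Delta t|w^*|_{1,\infty}$, which comes from inverting the lower bound $J\ge 1-c\,\Delta t|w^*|_{1,\infty}$, yields the claimed factor $(1+c|w^*|_{1,\infty}\Delta t)$.

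The main obstacle is part (\ref{item:bijective}): establishing that $X_1(w^*)$ neither leaves $\Omega$ nor fails to cover it on a general, possibly non-convex, polygonal domain. The zero-extension is what makes this clean, since it converts a statement about the geometry of $\Omega$ into a global contraction/fixed-point statement on $\R^d$ and supplies the Euclidean Lipschitz bound (on a non-convex $\Omega$ one cannot directly integrate $Dw^*$ along a segment joining two interior points). The remaining difficulty, minor by comparison, is verifying that the linear-in-$\Delta t$ determinant estimate in (\ref{item:jacobiest}) is sharp enough to deliver the refined factor in (3) rather than a crude constant such as $2$.
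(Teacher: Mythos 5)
Your proof is correct, and in substance it is the standard argument: the paper does not prove this lemma at all but refers to Rui--Tabata (2002) and Tabata--Uchiumi (2018), and the route taken there is essentially the one you reconstruct --- extend $w^*$ by zero to a globally Lipschitz field on $\R^d$ (legitimate because the trace vanishes, so the distributional gradient of the extension picks up no boundary term, and the paper's seminorm dominates the operator norm of the Jacobian), get injectivity from the displacement bound, get invariance and surjectivity of $\Omega$ from the fact that the extended map is a global bijection fixing $\R^d\setminus\Omega$ pointwise, and finish parts (2) and (3) by expanding $J=\det(I-\Delta t\,Dw^*)$ and changing variables under the resulting bi-Lipschitz bijection.

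One quantitative caution on part (2), where you left the constants unexamined: for $d=3$ the bound $1/2\le J\le 3/2$ at $\delta:=\Delta t|w^*|_{1,\infty}=1/4$ is razor-thin, and the cruder term-by-term estimates fail. Writing $A=\Delta t\,Dw^*$, the trace term already contributes $\sqrt3\,\delta\approx 0.433$; if you then bound the second elementary symmetric function by $\tfrac12(\sigma_1+\sigma_2+\sigma_3)^2\le\tfrac32\|A\|_F^2$ you land at roughly $0.53>1/2$, and similarly the generic bound $J\le\exp\bigl(\sqrt3\,\log\tfrac{1}{1-\delta}\bigr)\approx 1.65$ overshoots $3/2$. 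The argument does close if you use the sharp bounds $|e_2(A)|\le\|A\|_F^2$ (from $\sum_{i<j}|a_{ii}a_{jj}|\le\sum_i a_{ii}^2$ and $|a_{ij}a_{ji}|\le\tfrac12(a_{ij}^2+a_{ji}^2)$) and $|\det A|\le(\|A\|_F/\sqrt3)^3$, which give $|J-1|\le\sqrt3\,\delta+\delta^2+\delta^3/(3\sqrt3)\le 0.4986<1/2$ for $\delta\le 1/4$; alternatively, optimize $\prod_i(1\pm|\lambda_i|)$ jointly over $\sum_i|\lambda_i|^2\le\delta^2$. Part (3) is insensitive to this fine point, as any linear lower bound $J\ge 1-C\delta$ with $C\delta$ bounded away from $1$ yields $J^{-1}\le 1+c\,\delta$ and hence the claimed factor $(1+c\,|w^*|_{1,\infty}\Delta t)$.
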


Lemma \ref{lemm:errorEqArrange} is fundamental to establishing the stability in the error equations for the projection methods.
We give a proof for completeness
although it
is natural extension of the classical argument \cite{Guermond1996,Guermond1998}, and
is derived in  a similar way to Lemma 4.1 in \cite{Burman2017}.

\begin{lemma} \label{lemm:errorEqArrange}
Let $\{\til U_h^n\}_{n=n_0}^{n_1} \subset V_h$, 
$\{U_h^n\}_{n=n_0}^{n_1} \subset Y_h$,
$\{G^n\}_{n=n_0}^{n_1} \subset L^2(\Omega)^d$, 
and $\{P_{h}^n\}_{n=n_0}^{n_1}$,  $\{\Psi_{h}^n \}_{n=n_0}^{n_1} \subset Q_h$, satisfy
for $n=n_0,n_0+1,...,n_1-1$,
\begin{subequations}
\begin{align}
	\biggl( \frac{\til U_h^{n+1} - G^n}{\Delta t}, v_h \biggr) + a(\til U_h^{n+1}, v_h) + (\nabla \Psi_{h}^n, v_h) 
	&= \langle F^{n+1}, v_h \rangle, 
	~ \forall v_h \in V_h, \label{align:errorEqGena}\\
	\frac{U_h^{n+1} - \til U_h^{n+1}}{\Delta t}+ \nabla (P_{h}^{n+1} - \Psi_{h}^n) &=0,  \label{align:errorEqGenb}\\
	(U_h^{n+1}, \nabla q_h) - \ch(P_{h}^{n+1}, q_h) &= \langle S^{n+1}, q_h \rangle,  
	~ \forall q_h \in Q_h, \label{align:errorEqGenc}
\end{align}
\end{subequations}
with 
$F^{n+1}$ and 
$S^{n+1}$ being linear functionals on $V_h$ and $Q_h$, respectively.
Then, it holds that for $n=n_0, n_0+1,...,n_1-1$
\begin{equation} \label{eq:errorEqArrange}
\begin{split}
	&\frac{1}{2\Delta t} ( \| U_h^{n+1} \|_0^2 -  \| G^n \|_0^2 + \| \til U_h^{n+1} - G^n \|_0^2) \\
	&+ \nu \| \nabla \til U_h^{n+1}  \|_0^2 
	+ \frac{\Delta t}{2} ( \|\nabla  P_{h}^{n+1} \|_0^2 - \| \nabla \Psi_{h}^n \|_0^2 )
	+ \dzero \snorm{P_{h}^{n+1}}^2 \\
	= & \langle F^{n+1}, \til U_h^{n+1} \rangle - \langle S^{n+1}, P_{h}^{n+1} \rangle.
\end{split}
\end{equation}
\end{lemma}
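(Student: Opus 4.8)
The plan is to derive the energy identity \eqref{eq:errorEqArrange} by the classical projection-method strategy: test \eqref{align:errorEqGena} with $v_h = \til U_h^{n+1}$, rewrite the projection step \eqref{align:errorEqGenb} to control the splitting $U_h^{n+1} = \til U_h^{n+1} - \Delta t \nabla(P_h^{n+1} - \Psi_h^n)$, and substitute this into the pressure-stabilization relation \eqref{align:errorEqGenc} to close the pressure terms. The key algebraic tool throughout is the polarization identity $2(a-b, a) = \|a\|_0^2 - \|b\|_0^2 + \|a-b\|_0^2$, applied both in time-difference form and to the velocity-pressure decomposition.

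\medskip

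\textbf{Step 1.} First I would test \eqref{align:errorEqGena} with $v_h = \til U_h^{n+1}$. The viscous term gives $\nu \|\nabla \til U_h^{n+1}\|_0^2$ directly. For the first term, I apply polarization to $(\til U_h^{n+1} - G^n, \til U_h^{n+1})$, obtaining
\[
	\frac{1}{2\Delta t}\bigl( \|\til U_h^{n+1}\|_0^2 - \|G^n\|_0^2 + \|\til U_h^{n+1} - G^n\|_0^2 \bigr).
\]
This leaves the pressure coupling $(\nabla \Psi_h^n, \til U_h^{n+1})$ and the data term $\langle F^{n+1}, \til U_h^{n+1}\rangle$ on the right.

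\medskip

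\textbf{Step 2.} The main work is to convert $\|\til U_h^{n+1}\|_0^2$ into $\|U_h^{n+1}\|_0^2$ and to extract the pressure gradient increment. From \eqref{align:errorEqGenb} I have $\til U_h^{n+1} = U_h^{n+1} + \Delta t \nabla(P_h^{n+1} - \Psi_h^n)$. Substituting this pointwise-in-$L^2$ and expanding $\|\til U_h^{n+1}\|_0^2$ produces
\[
	\|U_h^{n+1}\|_0^2 + 2\Delta t (U_h^{n+1}, \nabla(P_h^{n+1} - \Psi_h^n)) + \Delta t^2 \|\nabla(P_h^{n+1} - \Psi_h^n)\|_0^2.
\]
The cross term is precisely where \eqref{align:errorEqGenc} enters: $(U_h^{n+1}, \nabla P_h^{n+1}) = \ch(P_h^{n+1}, P_h^{n+1}) + \langle S^{n+1}, P_h^{n+1}\rangle$ and similarly $(U_h^{n+1}, \nabla \Psi_h^n) = \ch(P_h^{n+1}, \Psi_h^n) + \langle S^{n+1}, \Psi_h^n\rangle$. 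For the $\Delta t^2$ term I again use polarization on $\nabla(P_h^{n+1} - \Psi_h^n)$ to generate $\|\nabla P_h^{n+1}\|_0^2 - \|\nabla \Psi_h^n\|_0^2$ together with a leftover $-2(\nabla P_h^{n+1}, \nabla \Psi_h^n) + 2\|\nabla\Psi_h^n\|_0^2$ type remainder.

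\medskip

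\textbf{The hard part} is the careful bookkeeping that makes all the stray pressure-coupling terms cancel exactly, yielding an identity rather than an inequality. In particular, the $(\nabla \Psi_h^n, \til U_h^{n+1})$ term surviving from Step 1 must be matched against the cross terms generated in Step 2 after using \eqref{align:errorEqGenb} once more to write $\til U_h^{n+1}$ in terms of $U_h^{n+1}$; the $\ch(P_h^{n+1}, \Psi_h^n)$ and the mixed $(\nabla P_h^{n+1}, \nabla \Psi_h^n)$ contributions have to recombine so that only $\dzero\snorm{P_h^{n+1}}^2 = \ch(P_h^{n+1}, P_h^{n+1})$ and $\frac{\Delta t}{2}(\|\nabla P_h^{n+1}\|_0^2 - \|\nabla \Psi_h^n\|_0^2)$ remain, with the $S^{n+1}$ terms collapsing to the single $-\langle S^{n+1}, P_h^{n+1}\rangle$ on the right. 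Since every manipulation is an exact identity (polarization and the three given equations, with no estimates), the equality in \eqref{eq:errorEqArrange} follows once the cancellations are verified. I would organize the computation to track the pressure terms separately from the velocity terms so that the telescoping structure $\|\nabla P_h^{n+1}\|_0^2 - \|\nabla \Psi_h^n\|_0^2$ is visible and no spurious $\ch(\cdot, \Psi_h^n)$ survives.
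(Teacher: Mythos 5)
Your proposal is correct and takes essentially the same route as the paper: the paper tests \eqref{align:errorEqGena} with $\til U_h^{n+1}$, tests \eqref{align:errorEqGenb} with $\Delta t \nabla \Psi_h^n$ and with $U_h^{n+1}$, takes $q_h = P_h^{n+1}$ in \eqref{align:errorEqGenc}, and adds the four resulting identities, which is algebraically identical to your substitution $\til U_h^{n+1} = U_h^{n+1} + \Delta t \nabla(P_h^{n+1} - \Psi_h^n)$ followed by expansion of the squares. One small simplification over your sketch: the cross term $(U_h^{n+1}, \nabla \Psi_h^n)$ cancels directly against the corresponding piece of $(\nabla \Psi_h^n, \til U_h^{n+1})$, so \eqref{align:errorEqGenc} is needed only with $q_h = P_h^{n+1}$, and no $\ch(P_h^{n+1}, \Psi_h^n)$ or $\langle S^{n+1}, \Psi_h^n \rangle$ terms ever arise.
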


\begin{proof}
The equation \eqref{align:errorEqGena} with $v_h = \til U_h^{n+1}$, 
and the identity $(a-b)a = \frac{1}{2} a^2 - \frac{1}{2} b^2 + \frac{1}{2} (a-b)^2$ yields
\begin{equation} 
\begin{split}
	&\frac{1}{2\Delta t} ( \| \til U_h^{n+1} \|_0^2 -  \| G^n \|_0^2 + \| \til U_h^{n+1} - G^n \|_0^2) \\
	&+ \nu \| \nabla \til U_h^{n+1}  \|_0^2  
	+ (\nabla \Psi_{h}^{n}, \til U_h^{n+1}) 
	= \langle F^{n+1}, \til U_h^{n+1} \rangle.
\end{split}
	\label{eq:errorEqArrangeNoNum1}
\end{equation}
Testing \eqref{align:errorEqGenb} with $\Delta t \nabla \Psi_{h}^n$ and using the identity $(a-b)b = \frac{1}{2} a^2 - \frac{1}{2} b^2 - \frac{1}{2} (a-b)^2$, and again using \eqref{align:errorEqGenb}, 
we have
\begin{equation}
	(U_h^{n+1} - \til U_h^{n+1}, \nabla \Psi_{h}^n) +  \frac{\Delta t}{2} ( \|\nabla  P_{h}^{n+1} \|_0^2 - \| \nabla \Psi_{h}^n \|_0^2 ) 
	- \frac{1}{2\Delta t} \| U_h^{n+1} - \til U_h^{n+1} \|_0^2 =0.
	\label{eq:errorEqArrangeNoNum2}
\end{equation}
Testing \eqref{align:errorEqGenb} with $U_h^{n+1}$ yields
\begin{equation}
\begin{split}
	\frac{1}{2\Delta t} (\|U_h^{n+1} \|_0^2 - \| \til U_h^{n+1} \|_0^2 &+ \| U_h^{n+1} - \til U_h^{n+1} \|_0^2 ) \\
	&+ (\nabla P_{h}^{n+1}, U_h^{n+1}) - (\nabla \Psi_{h}^n, U_h^{n+1}) = 0.
\end{split}
	\label{eq:errorEqArrangeNoNum3}
\end{equation}
Finally, \eqref{align:errorEqGenc} with $q_h = P_{h}^{n+1}$ yields
\begin{equation}
	(U_h^{n+1}, \nabla P_{h}^{n+1}) - \dzero \snorm{P_{h}^{n+1}}^2 = \langle S^{n+1}, P_{h}^{n+1} \rangle.
	\label{eq:errorEqArrangeNoNum4}
\end{equation}
Adding \eqref{eq:errorEqArrangeNoNum1}--\eqref{eq:errorEqArrangeNoNum3} and subtracting \eqref{eq:errorEqArrangeNoNum4},
we have the conclusion \eqref{eq:errorEqArrange}.
\end{proof}

\begin{remark}
In Lemma 4.1 of \cite{Burman2017}, the corresponding estimate is not based on $U_h^n$ but on $i_h^T U_h^n$,
and the stabilization term is functional on $Y_h$.
\end{remark}

\subsection{Proof of Theorem \ref{thm:veloSmallVis}}
Let $(\appu_h(t), \appp_h(t)) \in V_h \times Q_h$ be the interpolation $(\mathcal I_h^k u(t), \Pi_h p(t))$ in Hypothesis \ref{hypo:stab1}.
We use the following notation:
\begin{subequations}
\begin{align}
e_h^n &= u_h^n - \appu_h^n, & \til e_h^n &= \til u_h^n - \appu_h^n, & \eta(t) &= u(t) - z_h(t), \\
\eps_h^n &= p_h^n - \appp_h^n, & \psi_h^n &= p_h^n - \appp_h^{n+1}, &
X_1^n &= X_1(w^n).
\end{align}
\label{eq:errornotation}
\end{subequations}

We begin with error equations in $e_h^n$, $\til e_h^n$ and $\eps_h^n$.
Connecting \eqref{subeq:momentum} and \eqref{eq:Osweaka} at $t = t^{n+1}$, subtracting
\begin{align*}
	\biggl( \frac{\appu_h^{n+1} - \appu_h^{n}\circ X_1^n}{\Delta t}, v_h \biggr)
	+ a(\appu_h^{n+1}, v_h) + (\nabla r_h^{n+1}, v_h), \\
	(\appu_h^{n+1}, \nabla q_h) - \ch(\appp_h^{n+1}, q_h)
\end{align*}
from both sides of \eqref{subeq:momentum} (equaling \eqref{eq:Osweaka}) and \eqref{subeq:stab}, respectively,
and noting $i_h^T z_h^n = z_h^n$,
we get the following error equation for $n=0,1,...,N_T-1$.
\begin{subequations}
\label{eq:erroreq1}
\begin{alignat}{2}
	\biggl( \frac{\til e_h^{n+1} - (i_h^T e_h^{n}) \circ X_1^n}{\Delta t}, v_h \biggr) + a(\til e_h^{n+1} ,v_h) + (\nabla \psi_h^{n}, v_h) 
	& = 
	\bigl\langle R_1^{n+1}, v_h \bigr\rangle, 
	~ &&\forall v_h \in V_h, 
	\label{subeq:erroreq1a}\\
	\frac{e_h^{n+1} - \til e_h^{n+1}}{\Delta t} + \nabla(\eps_h^{n+1} - \psi_h^{n}) &= 0, \label{subeq:erroreq1b}\\
	(e_h^{n+1}, \nabla q_h) - \ch(\eps_h^{n+1}, q_h)  &= 
	\langle S_1^{n+1}, q_h \rangle, 
	~ &&\forall q_h \in Q_h, \label{subeq:erroreq1c}
\end{alignat}
\end{subequations}
where 
\begin{align}
	\bigl \langle R_1^{n+1}, v_h \bigr\rangle &:= ( R_{11}^{n+1} + R_{12}^{n+1}, v_h )  
	+ a(\eta^{n+1}, v_h) + (\nabla (p^{n+1} - \appp_h^{n+1}), v_h),  
	\notag 
	\\
	R_{11}^{n+1} &:= \frac{\partial u^{n+1}}{\partial t} + (w^{n+1} \cdot \nabla) u^{n+1}  
	- \frac{u^{n+1} - u^{n} \circ X_1^{n}}{\Delta t}, \label{eq:R11}\\
	R_{12}^{n+1} &:= \frac{\eta^{n+1} - \eta^{n} \circ X_1^{n}}{\Delta t}, \label{eq:R12}
	\\
	\bigl\langle S_1^{n+1}, q_h \bigr\rangle &:= -(\appu_h^{n+1}, \nabla q_h) + \ch(\appp_h^{n+1}, q_h).
	\notag
\end{align}

Since the estimates of $R_{11}^n$ and $R_{12}^n$ are obtained by standard techniques used in the LG method (e.g. Lemmas 8, 10 in \cite{Uchiumi2019}), 
we omit their proofs.
\begin{lemma} \label{lemm:estR1}
Suppose that 
$w \in C(W^{1,\infty}_0) \cap  H^1(L^\infty)$
and
$\Delta t |w|_{C(W^{1,\infty})} \leq 1/4$.
Then, there exists a positive constant $c$ depending on the norm $\|w\|_{C(L^{\infty})}$ such that
	\begin{align*}
	&\|R_{11}^n\|_0  \leq c \sqrt{\Delta t}  \biggl(   \|u\|_{Z^2(t^{n-1},t^n)} +  \left\| \frac{\partial w}{\partial t} \right\|_{L^2(t^{n-1}, t^n; L^\infty)} \|\nabla u^n\|_0 \biggr), 
	\quad \forall u\in Z^2, \\
	&\left\| \frac{v^n - v^{n-1} \circ X_1^{n-1}}{\Delta t} \right\|_0 
	\leq \frac{c}{\sqrt{\Delta t }} \left\| v \right\|_{H^1(t^{n-1}, t^n ;L^2) \cap L^2(t^{n-1}, t^n; H^1)},
	\quad \forall v \in H^1(L^2) \cap L^2(H^1).
\end{align*}
\end{lemma}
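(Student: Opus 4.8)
The plan is to derive both inequalities from two reusable devices: a fundamental-theorem-of-calculus representation of each composition difference along a straight-line path in $(x,t)$, and a change of variables $x \mapsto x - \tau w(x)$ whose Jacobian is pinched between $1/2$ and $3/2$ by Lemma~\ref{lemm:bijectiveMix}-(\ref{item:jacobiest}) whenever $0 \le \tau \le \Delta t$, since then $\tau|w|_{1,\infty} \le \Delta t\, |w|_{C(W^{1,\infty})} \le 1/4$. Throughout, the factor $\sqrt{\Delta t}$ is produced by a Cauchy--Schwarz inequality in time.

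For the second estimate I would fix $x$ and introduce the affine path $\mathcal X(s) := x - (t^n - s)\,w^{n-1}(x)$ for $s \in [t^{n-1},t^n]$, which satisfies $\mathcal X(t^n) = x$ and $\mathcal X(t^{n-1}) = X_1^{n-1}(x)$. Because $d\mathcal X/ds = w^{n-1}(x)$ is constant in $s$,
\[
  v^n(x) - v^{n-1}(X_1^{n-1}(x)) = \int_{t^{n-1}}^{t^n} \left( \frac{\partial v}{\partial t} + (w^{n-1}\cdot\nabla) v \right)(\mathcal X(s),s)\, ds .
\]
Squaring and applying Cauchy--Schwarz in $s$ supplies a factor $\Delta t$; then, for each fixed $s$, the change of variables $x \mapsto \mathcal X(s)$ (legitimate with $\tau = t^n - s \le \Delta t$) collapses the composed norms to ordinary ones, giving $\|v^n - v^{n-1}\circ X_1^{n-1}\|_0 \le c\sqrt{\Delta t}\,\|v\|_{H^1(t^{n-1},t^n;L^2)\cap L^2(t^{n-1},t^n;H^1)}$, where the $\|w\|_{C(L^\infty)}$ factor on the convective part is absorbed into $c$. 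Dividing by $\Delta t$ yields the claim.

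For $R_{11}^n$ in \eqref{eq:R11} I would first match the convective velocity of the material derivative with the foot of an auxiliary characteristic that also uses $w^n$, writing $R_{11}^n = A^n + B^n$ with
\[
  A^n := \frac{\partial u^n}{\partial t} + (w^n\cdot\nabla) u^n - \frac{u^n - u^{n-1}\circ X_1(w^n)}{\Delta t}, \qquad
  B^n := \frac{u^{n-1}\circ X_1(w^n) - u^{n-1}\circ X_1^{n-1}}{\Delta t} .
\]
For $A^n$, set $\Phi(\theta) := u(x - \theta w^n(x)\Delta t,\, t^n - \theta\Delta t)$, so that $\Phi(0) = u^n(x)$, $\Phi(1) = (u^{n-1}\circ X_1(w^n))(x)$ and $\Phi'(0) = -\Delta t\,(\partial_t u^n + (w^n\cdot\nabla)u^n)(x)$; the Taylor identity with integral remainder then gives $A^n = \Delta t^{-1}\int_0^1 (1-\theta)\Phi''(\theta)\,d\theta$, where $\Phi''(\theta) = \Delta t^2\,(\frac{\partial^2 u}{\partial t^2} + 2(w^n\cdot\nabla)\frac{\partial u}{\partial t} + (w^n\cdot\nabla)^2 u)$ along the path. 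The same change of variables together with the substitution $\theta \mapsto s = t^n - \theta\Delta t$ and Cauchy--Schwarz bounds $\|A^n\|_0$ by $c\sqrt{\Delta t}$ times the $L^2(t^{n-1},t^n;\cdot)$ norms of these three second-order quantities, which are exactly the constituents of $\|u\|_{Z^2(t^{n-1},t^n)}$. For $B^n$, note $X_1(w^n)(x) - X_1^{n-1}(x) = -\Delta t\, d_t w^n(x)$, so the fundamental theorem of calculus along the segment joining the two feet gives $B^n = -\int_0^1 \nabla u^{n-1}(\xi_\theta(x))\cdot d_t w^n(x)\, d\theta$ for a convex combination $\xi_\theta$ of the two feet; a Jacobian-controlled change of variables and \eqref{eq:dthalfest} (with $X = L^\infty$) then yield $\|B^n\|_0 \le c\,\|d_t w^n\|_{0,\infty}\|\nabla u^{n-1}\|_0 \le c\sqrt{\Delta t}\,\|\partial w/\partial t\|_{L^2(t^{n-1},t^n;L^\infty)}\|\nabla u^{n-1}\|_0$, which is the second term of the bound up to replacing $\nabla u^{n-1}$ by $\nabla u^n$ (the discrepancy carries an extra $\sqrt{\Delta t}$ and is absorbed).

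The main obstacle is bookkeeping rather than conceptual. One must check at every step that the maps $x \mapsto x - \tau w(x)$ stay in the regime $\tau|w|_{1,\infty}\le 1/4$ so that Lemma~\ref{lemm:bijectiveMix} applies uniformly, and one must couple the $\theta \mapsto s$ substitution with Cauchy--Schwarz so as to extract exactly one factor $\sqrt{\Delta t}$ (not $\Delta t$) while converting the pointwise-in-time second derivatives into the localized $\|u\|_{Z^2(t^{n-1},t^n)}$ seminorm. Pinning down these powers of $\Delta t$ and the correct time localization is the delicate part; the underlying spatial estimates are routine.
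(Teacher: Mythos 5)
Your proof is correct in substance, and it runs on the same machinery the paper's standard references use: Taylor expansion with integral remainder along the approximate characteristic, the Jacobian pinching of Lemma \ref{lemm:bijectiveMix}-(\ref{item:jacobiest}) to legitimize each change of variables $x \mapsto x - \tau w(x)$ for $0\le\tau\le\Delta t$, and Cauchy--Schwarz in time to extract the single factor $\sqrt{\Delta t}$; your treatment of the second inequality is exactly the standard one. Where you genuinely deviate is in the splitting of $R_{11}^n$: you perturb the characteristic foot, keeping the material derivative at velocity $w^n$ and introducing the two-map comparison term $\frac{1}{\Delta t}\bigl(u^{n-1}\circ X_1^{n-1} - u^{n-1}\circ X_1(w^n)\bigr)$ --- note, incidentally, that your $B^n$ carries the wrong sign, so $R_{11}^n = A^n - B^n$ in your notation, which is harmless for the norm bound. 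The split used in the literature the paper cites, and visible inside the paper's own proof of Lemma \ref{lemm:dtR11R12} in Appendix \ref{subsec:proofoflemmadtR11R12}, instead perturbs the advecting velocity: $R_{11}^n = \bigl[D_w^{n-1}u^n - \frac{u^n - u^{n-1}\circ X_1^{n-1}}{\Delta t}\bigr] + \bigl[(w^n - w^{n-1})\cdot\nabla\bigr]u^n$. That version lands directly on the stated bound, since the commutator term is pointwise bounded by $\|d_t w^n\|_{0,\infty}\,|\nabla u^n|$ and \eqref{eq:dthalfest} with $X=L^\infty$ finishes, whereas your version needs two extra steps: (i) the two-map estimate for $B^n$, which is precisely Lemma \ref{lemm:twofunc} with $q=2$, $p=\infty$ (your rederivation via the interpolated map $\xi_\theta = X_1(\theta w^n + (1-\theta)w^{n-1})$ is legitimate, because the convex combination stays in $W^{1,\infty}_0(\Omega)^d$ with $\Delta t\,|\cdot|_{1,\infty}\le 1/4$, so Lemma \ref{lemm:bijectiveMix} applies uniformly in $\theta$); and (ii) the swap $\nabla u^{n-1}\to\nabla u^n$, whose remainder $c\,\Delta t\,\|\partial w/\partial t\|_{L^2(t^{n-1},t^n;L^\infty)}\|u\|_{Z^2(t^{n-1},t^n)}$ is absorbable only at the price of a constant depending additionally on $\|w\|_{H^1(L^\infty)}$ and $T$, slightly more than the $\|w\|_{C(L^\infty)}$-dependence announced in the lemma --- harmless under the paper's generic-constant conventions, but the velocity-perturbation split avoids it altogether. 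Your bookkeeping of the admissibility condition $\tau|w|_{1,\infty}\le 1/4$ at every change of variables, and of where $\sqrt{\Delta t}$ rather than $\Delta t$ emerges, is otherwise accurate.
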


\begin{proof}[Proof of Theorem \ref{thm:veloSmallVis}]
We apply Lemma \ref{lemm:errorEqArrange} to \eqref{eq:erroreq1} 
and obtain
\begin{align}
	& \frac{1}{2\Delta t} (\| e_h^{n+1} \|_0^2 -  \| (i_h^T e_h^{n}) \circ X_1^n \|_0^2 + \| \til e_h^{n+1} - (i_h^T e_h^{n}) \circ X_1^n \|_0^2) 
	+ \nu \| \nabla \til e_h^{n+1} \|_0^2  \notag \\
	& 
	+ \dzero \snorm{\eps_h^{n+1}}^2
	+ \frac{\Delta t}{2} (\| \nabla \eps_h^{n+1} \|_0^2 -  \| \nabla \psi_h^{n} \|_0^2) 
	= \langle R_1^{n+1}, \til e_h^{n+1} \rangle - \langle S_1^{n+1}, \eps_h^{n+1} \rangle.
	\label{eq:prth1erroreq}
\end{align}

For the estimate of $\| \til e_h^{n+1} \|_0$
we fix a $\gamma_0$ such that $\Delta t_0 \leq \frac{1}{8\gamma_0}$.
In the following we will get the bounds for $|\langle R_1^{n+1}, \til e_h^{n+1} \rangle|$ and $|\langle S_1^{n+1}, \eps_h^{n+1} \rangle|$ in the same way as in \cite{Uchiumi2019}.
From the Schwarz's inequality, 
\begin{equation}
	|(R^{n+1}_{1i}, \til e_h^{n+1})| \leq \frac{1}{\gamma_0} \|R_{1i}^{n+1}\|_0^2 + \frac{\gamma_0}{4} \|\til e_h^{n+1}\|_0^2, ~ i=1,2.
	\label{eq:R3Schwarz}
\end{equation}
Estimates for $\|R_{1i}^{n+1}\|_0^2$, $i=1,2$, are obtained by Lemma \ref{lemm:estR1} 
with $v=\eta$, 
and the following estimate is obtained by \eqref{eq:hypoVQvinterpo}:
\[
	\| \eta \|_{H^1(t^{n}, t^{n+1}; L^2) \cap L^2(t^{n}, t^{n+1}; H^{1})} 
	\leq ch^k \| u \|_{H^1(t^{n}, t^{n+1}; H^k) \cap L^2(t^{n}, t^{n+1}; H^{k+1})}.
\]
Bounds for the other terms in $\langle R_1^{n+1}, \til e_h^{n+1} \rangle$ are easily obtained from \eqref{eq:hypoVQvinterpo} and \eqref{eq:s0projapprox}:
\begin{align}
	|a(\eta^{n+1}, \til e_h^{n+1})| \leq \frac{\nu}{2} \|\nabla \eta^{n+1}\|_{0}^2 & + \frac{\nu}{2} \|\nabla \til e_h^{n+1}\|_{0}^2 
	\leq c \nu h^{2k} \| u^{n+1} \|_{k+1}^2 + \frac{\nu}{2} \|\nabla \til e_h^{n+1}\|_{0}^2, \notag \\
	|(\nabla(p^{n+1} - r_h^{n+1}), \til e_h^{n+1})| 
	& \leq \frac{1}{\gamma_0} \| \nabla (p^{n+1} - r_h^{n+1}) \|_0^2 + \frac{\gamma_0}{4} \|\til e_h^{n+1}\|_0^2 \notag \\
	& \leq  ch^{2k} \| p^{n+1} \|_{k+1}^2 + \frac{\gamma_0}{4} \|\til e_h^{n+1}\|_0^2.
	\label{eq:nablapEst}
\end{align}
Bounds for the terms in $\langle S_1^{n+1}, \eps_h^{n+1} \rangle$ are obtained 
from \eqref{eq:hypo1practical} and \eqref{eq:s0projstability} as follows:
\begin{align*}
	|(z_h^{n+1}, \nabla \eps_h^{n+1})| &= |(\nabla \cdot (z_h^{n+1} - u^{n+1}), \eps_h^{n+1})|
	\leq ch^{k} \| u^{n+1} \|_{k+1} \snorm{\eps_h^{n+1}}
	\notag \\
	& \leq \frac{c}{\dzero} h^{2k} \| u^{n+1} \|_{k+1}^2 
		+ \frac{\dzero}{4} \snorm{\eps_h^{n+1}}^2. 
	\\
	|\ch(r_h^{n+1}, \eps_h^{n+1})| 
	&\leq 
	\dzero \snorm{r_h^{n+1}} \snorm{\eps_h^{n+1}}
	\notag \\
	& \leq
	\dzero \snorm{r_h^{n+1}}^2 + \frac{\dzero}{4} \snorm{\eps_h^{n+1}}^2
	\leq c\dzero h^{2k} \|p^{n+1}\|_{k}^2 
	+ \frac{\dzero}{4} \snorm{\eps_h^{n+1}}^2.
\end{align*}
For $\|\til e_h^{n+1}\|_0^2$ in \eqref{eq:R3Schwarz} and \eqref{eq:nablapEst}
\begin{equation}
	\gamma_0 \| \til e_h^{n+1} \|_0^2 \leq  
	2\gamma_0 \| \til e_h^{n+1} - (i_h^T e_h^n) \circ X_1^n \|_0^2
	+ 2\gamma_0 \| (i_h^T e_h^n) \circ X_1^n \|_0^2,
	\label{eq:etilEst}
\end{equation}
and since $2\gamma_0 \leq \frac{1}{4 \Delta t_0} < \frac{1}{2 \Delta t}$, 
the first term is absorbed by the left hand side of \eqref{eq:prth1erroreq}.
From Lemma \ref{lemm:bijectiveMix}
and since $i_h^T$ is the $L^2$-projector,
\begin{equation}
	\| (i_h^T e_h^n) \circ X_1^n \|_0^2 
	\leq (1+c\Delta t) \| i_h^T e_h^n \|_0^2
	\leq (1+c\Delta t) \| e_h^n \|_0^2.
	 \label{eq:ehxEst}
\end{equation}
The estimate for $\| \nabla \psi_h^n \|_0^2$ is obtained by \eqref{eq:dthalfest} 
as follows:
\begin{equation}
\begin{split}
	\| \nabla \psi_h^n \|_0^2
	&= \| \nabla \eps_h^n - \nabla(r_h^{n+1} - r_h^n) \|_0^2  \\
	& \leq (1+\Delta t) \|\nabla \eps_h^n \|_0^2 + \Bigl( 1+\frac{1}{\Delta t} \Bigr) \|\nabla(r_h^{n+1} - r_h^n)\|_0^2, 
	\\
	& \leq (1+\Delta t) \|\nabla \eps_h^n \|_0^2 + c \|r_h\|_{H^1(t^n, t^{n+1}; H^1)}^2.
\end{split}
\label{eq:pdtesti}
\end{equation}
We note that $\|r_h\|_{H^1(t^{n},t^{n+1}; H^1)} \leq c \|p\|_{H^1(t^{n},t^{n+1}; H^1)}$ by \eqref{eq:s0projapprox}.

Combining these estimates, 
from \eqref{eq:prth1erroreq},
we now obtain
for $n=0,1,...,N_T-1$, 
\begin{equation}
	\frac{x^{n+1} - x^n}{\Delta t} + y^{n+1} \leq c x^n + c b^{n+1},
	\label{eq:gronwallprepare3}
\end{equation}
where
\begin{align}
	x^n = & \| e_h^n \|_0^2 + \Delta t^2 \| \nabla \eps_h^n \|_0^2, 
	\notag\\ 
	y^n = & 
	\frac{1}{2\Delta t} \| \til e_h^{n} - (i_h^T e_h^{n-1}) \circ X_1^{n-1} \|_0^2 + 
	\nu \| \nabla \til e_h^n \|_0^2 
	+ \dzero \snorm{\eps_h^n}^2, 
	\notag \\
	b^n = & \Delta t ( \| u \|_{Z^2(t^{n-1},t^n)}^2 + \| w \|_{H^1(t^{n-1}, t^n; L^\infty)}^2 + \| p \|_{H^1(t^{n-1}, t^{n}; H^1)}^2 ) 
	\notag \\
	& + \frac{c h^{2k}}{\Delta t} \| u \|_{H^1(t^{n-1}, t^n; H^k) \cap L^2(t^{n-1}, t^n; H^{k+1})}^2 
	+ c ( 1 + \nu + \dzero + \dzero^{-1} ) h^{2k},
	\notag
\end{align}
and $c$ is a constant depending on the Sobolev norms of $u$, $p$ and $w$, the constants in Hypothesis \ref{hypo:stab1} and in Lemmas \ref{lemm:bijectiveMix} and \ref{lemm:estR1}.
We apply Lemma \ref{lem:discreteGronwall} to \eqref{eq:gronwallprepare3} and obtain
\begin{equation}
\begin{split}
	& \| e_h^n \|_0^2 + \Delta t \sum_{i=1}^n \nu \| \til e_h^i \|_0^2 
	+ \frac{1}{2} \sum_{i=1}^n \| \til e_h^{i} - (i_h^T e_h^{i-1}) \circ X_1^{i-1} \|_0^2  
	+ \Delta t \sum_{i=1}^n \dzero  | \eps_h^i |_s^2 \\
	\leq & c( 1+\nu+\dzero+\dzero^{-1})( \| e_h^0 \|_0^2 + \Delta t^2 \| \nabla \eps_h^0 \|_0^2 + \Delta t^2 + h^{2k}).
\end{split}
	\label{eq:pfth3afterg}
\end{equation}
The estimate for the initial values are easily obtained from Hypotheses \ref{hypo:initSmallVis} and \ref{hypo:stab1}:
\begin{align*}
	\| e_h^0 \|_0 &\leq \| u_h^0 - u^0 \|_0 + \| u^0 - \appu_h^0 \|_0 \leq ch^k, \\
	\Delta t \| \nabla \eps_h^0 \|_0 & \leq \Delta t \| \nabla (p_h^0 - p^0) \|_0 + \Delta t \| \nabla (p^0 - \appp_h^0) \|_0 \leq c \Delta t.
\end{align*}

Now, the conclusion \eqref{eq:velocitySmallVis} follows from the triangle inequalities applied to 
$u_h - u = e_h - \eta$ and $\til u_h - u = \til e_h - \eta$,
\eqref{eq:pfth3afterg}, and 
\eqref{eq:hypoVQvinterpo}
for $\eta$.
We note that the estimate of $\|\til e_h^n\|_0$ follows from \eqref{eq:etilEst}, \eqref{eq:ehxEst} and \eqref{eq:pfth3afterg}.
\end{proof}

\section{An error estimate for the pressure}
\label{sec:errorEstiP}

In this section, to concentrate on the convergence order, 
we also use the notation $c_{\nu, \dzero}$ that may depend on $\nu, 1/\nu, \dzero$ and $1/\dzero$.
Additionally, we use notation and Lemmas in Subsections \ref{subsec:orderhypoandmaintheo} and \ref{subsec:prelV}.

\subsection{Hypotheses and the main theorem for the pressure}

\begin{hypo}\label{hypo:pestiReg}
The velocity $w$ and
the exact solution $(u, p)$ of the Oseen problem \eqref{Os} satisfy
\[
	w \in W^{2,\infty}(L^\infty) \cap H^1(W^{1,\infty}_0), \quad
	u \in Z^3 \cap H^2(H^{k+1}), \quad
	p \in H^2(H^k).
\]
\end{hypo}

We introduce the Stokes projection $(\pro u^*, \pro p^*) \in V_h \times Q_h$ of $(u^*, p^*) \in H^1_0(\Omega)^d \times L^2_0(\Omega)$, 
which satisfies the following equations:
\begin{subequations}
\begin{alignat}{3}
	a(\pro u^*, v_h) - (\pro p^*, \nabla \cdot  v_h) 
	&= a(u^*, v_h) -  (p^*, \nabla \cdot  v_h) \quad && \forall v_h \in V_h, \\
	-(\nabla \cdot \pro u^*, q_h) - \ch(\pro p^*, q_h) &= -(\nabla \cdot u^*, q_h) && \forall q_h \in Q_h.
\end{alignat}
\label{eq:StokesProj}
\end{subequations}

\begin{hypo}\label{hypo:initP}
The initial value $(u_h^0, p_h^0)$ satisfies 
$(u_h^0, p_h^0) = (\pro u^0, \pro p^0)$.
\end{hypo}

\begin{theorem}\label{thm:pres}
Let $(u_h, p_h):=\{(u_h^n, p_h^n)\}_{n=0}^{N_T}$ be the solution of 
Scheme$(k, k, \dzero)$ 
with $k \geq 1$ and $\dzero >0$.
Hypotheses \ref{hypo:pestiReg}, 
\ref{hypo:dt}, \ref{hypo:stab1}, and \ref{hypo:initP}.
Then the following estimate holds:
\begin{equation}
	\|p_h - p \|_{\ell^2(L^2)}
	\leq c_{\nu,\dzero}(\Delta t + h^k).
	\label{eq:prestheoconcl}
\end{equation}
\end{theorem}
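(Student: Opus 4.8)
The plan is to estimate the pressure through a Fortin-type stabilized inf-sup inequality for the equal-order pair, reducing the full $L^2$-norm of the pressure error to the stabilization seminorm (already controlled by Theorem \ref{thm:veloSmallVis}) plus the momentum residual tested against $V_h$. Writing $p_h^n - p^n = \eps_h^n - (p^n - \appp_h^n)$ and $\psi_h^n = \eps_h^n - d_t \appp_h^{n+1}$, the interpolation pieces $\|p^n - \appp_h^n\|_0$ and $\|d_t \appp_h^{n+1}\|_0$ are $O(h^k)$ and $O(\Delta t)$ by Hypothesis \ref{hypo:stab1}, so it suffices to bound $\|\psi_h^n\|_0$ in $\ell^2(L^2)$. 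For this I would invoke $\beta\|\psi_h^n\|_0 \le \sup_{v_h\in V_h}(\nabla\cdot v_h,\psi_h^n)/\|\nabla v_h\|_0 + \snorm{\psi_h^n}$, which follows from Hypothesis \ref{hypo:stab1} by a Fortin/Cl\'ement argument (cf.\ \cite{Garcia-archilla2020}); the seminorm $\snorm{\psi_h^n}$ is controlled in $\ell^2$ by the last quantity in \eqref{eq:velocitySmallVis} together with \eqref{eq:s0projstability}.

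To bound the supremum I use the momentum error equation \eqref{subeq:erroreq1a}, which gives $(\nabla\cdot v_h,\psi_h^n) = -\langle R_1^{n+1},v_h\rangle + a(\til e_h^{n+1},v_h) + (D_h^{n+1},v_h)$, where $D_h^{n+1} := (\til e_h^{n+1} - (i_h^T e_h^n)\circ X_1^n)/\Delta t$ is the discrete material derivative of the velocity error. The residual $\langle R_1^{n+1},\cdot\rangle$ is $O(\Delta t + h^k)$ in $\ell^2$ by Lemma \ref{lemm:estR1}, \eqref{eq:hypoVQvinterpo} and \eqref{eq:s0projapprox} (integrating by parts the term $(\nabla(p^{n+1}-\appp_h^{n+1}),v_h)$ to retain the optimal power of $h$), while $a(\til e_h^{n+1},v_h)/\|\nabla v_h\|_0 \le \nu\|\nabla\til e_h^{n+1}\|_0$ is $c_{\nu,\dzero}(\Delta t + h^k)$ by the $\nu^{1/2}$-weighted bound in \eqref{eq:velocitySmallVis}. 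The essential term is $D_h^{n+1}$, which I split as $\partial_h^{n+1} + C_h^{n+1}$ with $\partial_h^{n+1} = (\til e_h^{n+1} - i_h^T e_h^n)/\Delta t$ and $C_h^{n+1} = (i_h^T e_h^n - (i_h^T e_h^n)\circ X_1^n)/\Delta t$. For the convective part I integrate by parts and change variables (the Lagrange--Galerkin lemma proved in the appendix), obtaining $(C_h^{n+1},v_h)/\|\nabla v_h\|_0 \le c\|e_h^n\|_0$, which is $O(\Delta t + h^k)$ by the first bound in \eqref{eq:velocitySmallVis}.

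The crux is the discrete time-difference part $\partial_h^{n+1}$, whose norm cannot be extracted from the momentum equation itself without circularity. I would obtain it from a second energy estimate: apply the difference operator $d_t$ to the whole error system \eqref{eq:erroreq1} and invoke Lemma \ref{lemm:errorEqArrange} for the differenced system, with $\til U_h = d_t\til e_h$, $U_h = d_t e_h$, $P_h = d_t\eps_h$, $\Psi_h = d_t\psi_h$, and $G^n = d_t[(i_h^T e_h^n)\circ X_1^n]$. One decomposes $G^n = (i_h^T d_t e_h^n)\circ X_1^n + [(i_h^T e_h^{n-1})\circ X_1^n - (i_h^T e_h^{n-1})\circ X_1^{n-1}]$; the first summand gives the coercive telescoping $\|\cdot\|_0 \le (1+c\Delta t)\|d_t e_h^n\|_0$ via Lemma \ref{lemm:bijectiveMix}, while the second, map-difference summand is estimated using $X_1^n - X_1^{n-1} = -\Delta t\, d_t w^n$, an inverse inequality for $\nabla(i_h^T e_h^{n-1})$, and the regularity $w\in H^1(W^{1,\infty}_0)\cap W^{2,\infty}(L^\infty)$ of Hypothesis \ref{hypo:pestiReg}. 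The differenced consistency errors $d_t R_1$ and $d_t S_1$ are controlled through the second-order difference inequality \eqref{eq:dtsechalfest} and the extra regularity $u\in Z^3\cap H^2(H^{k+1})$, $p\in H^2(H^k)$, while the Stokes-projection initial data of Hypothesis \ref{hypo:initP} make the initial time-difference of the right order. The discrete Gronwall inequality (Lemma \ref{lem:discreteGronwall}) then yields $\|d_t e_h\|_{\ell^\infty(L^2)}$ and hence $\|\partial_h\|_{\ell^2(L^2)} = c_{\nu,\dzero}(\Delta t + h^k)$.

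I expect this last step to be the main obstacle: time-differencing degrades powers of $\Delta t$ and the map-difference term forces an inverse inequality, so the delicate point is to verify that every contribution remains of optimal order $\Delta t + h^k$, with no surviving negative power of $\Delta t$ or $h$ — which is exactly where the strengthened regularity of Hypothesis \ref{hypo:pestiReg} and the Stokes-projection initialization are indispensable. Collecting the above, summing in $n$, and absorbing all $\nu$- and $\dzero$-dependent factors into $c_{\nu,\dzero}$ gives $\|\psi_h\|_{\ell^2(L^2)} \le c_{\nu,\dzero}(\Delta t + h^k)$, and therefore \eqref{eq:prestheoconcl}.
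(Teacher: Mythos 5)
Your overall architecture coincides with the paper's: reduce $\|p_h-p\|_{\ell^2(L^2)}$ via the weak inf-sup condition (Lemma \ref{lemm:stabinfsup}) to the supremum of $(\nabla \eps_h^{n+1},v_h)/\|v_h\|_1$, read that supremum off the momentum error equation (whose convective part is handled, as you say, by the Douglas--Russell-type bound \eqref{eq:DR}), and recognize that the one genuinely new ingredient is an $L^2$ bound on the discrete time difference $d_t e_h^n/\Delta t$, obtained by differencing the error system and applying Lemma \ref{lemm:errorEqArrange} plus Gronwall (this is the paper's Lemma \ref{lemm:dtest1}). But two concrete steps fail as you set them up. First, you keep the interpolation pair $(\appu_h,\appp_h)=(\mathcal I_h^k u,\Pi_h p)$ of Section \ref{sec:errorEstiSmallVis} as comparison functions, while your Gronwall argument for the differenced system starts at $n=1$ and therefore needs $x^1=\|d_t e_h^1\|_0^2+\Delta t^2\|d_t\nabla\eps_h^1\|_0^2\le c\,\Delta t^2(\Delta t+h^k)^2$. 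Hypothesis \ref{hypo:initP} gives $u_h^0=\pro u^0$, so with your definitions $e_h^0=\pro u^0-\mathcal I_h^k u^0$, which is only $O(h^k)$ in $L^2$ with no factor $\Delta t$; the claim that the Stokes-projection initial data ``make the initial time-difference of the right order'' is then false, and the final pressure bound is polluted by $h^k/\Delta t$. This is precisely why the paper redefines $(\appu_h,\appp_h)$ in Section \ref{sec:errorEstiP} as the Stokes projection \eqref{eq:StokesProj}: then $e_h^0=0$ and $\eps_h^0=0$, the first step is estimated from the $n=0$ equation alone (leading to \eqref{eq:dtInitEsti}), and as a bonus the terms $a(\eta,\cdot)$, $(\nabla(p-\appp_h),\cdot)$ and $S_1$ drop out, so the differenced residual contains only $d_tR_{11}+d_tR_{12}$ and the map-difference term --- none of the extra pieces of $d_tR_1$, $d_tS_1$ that you would otherwise have to absorb arise at all.

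The second gap is your treatment of that map-difference term $(i_h^Te_h^{n-1})\circ X_1^n-(i_h^Te_h^{n-1})\circ X_1^{n-1}$ via Lemma \ref{lemm:twofunc} combined with an inverse inequality for $\nabla(i_h^Te_h^{n-1})$. The inverse inequality requires quasi-uniformity (the paper assumes only a regular family) and, more seriously, costs a factor $h^{-1}\|e_h^{n-1}\|_0$; carried through Gronwall this yields $\|d_te_h^n/\Delta t\|_0\lesssim h^{-1}(\Delta t+h^k)=\Delta t/h+h^{k-1}$, which is not the claimed $\Delta t+h^k$ unless one imposes a CFL-type coupling and accepts a loss of one power of $h$. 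The paper's key device here is Lemma \ref{lemm:DRlike}, the negative-norm estimate $\|v\circ X_1(w_1)-v\circ X_1(w_2)\|_{-1}\le c\,\Delta t\,\|v\|_0\|w_1-w_2\|_{1,\infty}$, proved by change of variables with Jacobian control: it puts no derivative on $v$ and uses no inverse inequality, and pairing it with $\|d_t\til e_h^{n+1}\|_1$, absorbed into $\nu\|\nabla d_t\til e_h^{n+1}\|_0^2$, produces the optimal-order contribution $\frac{c\,\Delta t}{\nu}\|e_h\|_{\ell^\infty(L^2)}^2\|w\|_{H^1(W^{1,\infty})}^2$. You correctly identified this as the place where a negative power of $\Delta t$ or $h$ could survive; in your version it does survive, and Lemma \ref{lemm:DRlike} is the missing idea that removes it.
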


\begin{remark}
For the initial value, 
\[
	\| u_h^0 - \pro u^0 \|_0, ~ \Delta t \| \nabla(p_h^0 - \pro p^0) \|_0
	\leq c_{\nu, \dzero} \Delta t (\Delta t + h^k)
\]
is actually needed in the proof of Theorem \ref{thm:pres} as in \cite{AchdouGuermond2000,Guermond1998}.
Hypothesis \ref{hypo:initP} is a sufficient condition.
\end{remark}

\subsection{Preliminaries for the pressure estimate} 

Lemma \ref{lemm:stabinfsup} is the weak inf-sup condition proved in \cite{BurmanFernandez2008}.

\begin{lemma} \label{lemm:stabinfsup}
Under Hypothesis \ref{hypo:stab1},
there exists a positive constant $c$ independent of $h$ such that
\[
	\|q_h\|_0 \leq c \sup_{v_h \in V_h \setminus\{0\}} \frac{(\nabla \cdot v_h,q_h)}{\|v_h\|_1} 
	+ c \szero(q_h, q_h)^{1/2},
	~ \forall q_h \in Q_h.
\]	
\end{lemma}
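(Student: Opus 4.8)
The plan is to reduce the weak inf-sup condition to the classical continuous inf-sup condition for the divergence operator, paying for the passage to the discrete velocity space by the stabilization seminorm $\snorm{\cdot}$. Fix $q_h \in Q_h \subset L^2_0(\Omega)$. By the surjectivity of the divergence from $H^1_0(\Omega)^d$ onto $L^2_0(\Omega)$ (the continuous Ladyzhenskaya--Babu\v{s}ka--Brezzi condition), there exists $v \in H^1_0(\Omega)^d$ with $\nabla \cdot v = q_h$ and $\|v\|_1 \leq c\|q_h\|_0$, so that $\|q_h\|_0^2 = (\nabla \cdot v, q_h)$.

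I would then fix a stable quasi-interpolation operator $j_h: H^1_0(\Omega)^d \to V_h$ of Cl\'ement or Scott--Zhang type, which is $H^1$-stable, $\|j_h v\|_1 \leq c\|v\|_1$, preserves the homogeneous boundary condition, and satisfies the local bounds $\|v - j_h v\|_{0,K} \leq c h_K |v|_{1,\omega_K}$ on each $K \in \mathcal T_h$, with $\omega_K$ the element patch. Splitting
\[
	\|q_h\|_0^2 = (\nabla \cdot j_h v, q_h) + (\nabla \cdot (v - j_h v), q_h),
\]
the first, discrete term is immediately controlled by the supremum, since $j_h v \in V_h$:
\[
	(\nabla \cdot j_h v, q_h) \leq \|j_h v\|_1 \sup_{w_h \in V_h \setminus \{0\}} \frac{(\nabla \cdot w_h, q_h)}{\|w_h\|_1} \leq c\|q_h\|_0 \sup_{w_h \in V_h \setminus \{0\}} \frac{(\nabla \cdot w_h, q_h)}{\|w_h\|_1}.
\]

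The heart of the argument is the second, approximation term $(\nabla \cdot (v - j_h v), q_h)$, which has to be absorbed into $\snorm{q_h}$. I would integrate by parts element by element: because $v - j_h v \in H^1_0(\Omega)^d$ has continuous traces across interior faces while $q_h \in Q_h \subset C(\overline\Omega)$ is continuous, the interior face contributions cancel in pairs and the boundary contributions vanish, leaving $(\nabla \cdot (v - j_h v), q_h) = -(v - j_h v, \nabla q_h)$. A weighted Cauchy--Schwarz inequality over the elements then gives
\[
	|(v - j_h v, \nabla q_h)| \leq \biggl( \sum_{K \in \mathcal T_h} h_K^{-2}\|v - j_h v\|_{0,K}^2 + \|v - j_h v\|_1^2 \biggr)^{1/2} \biggl( \sum_{K \in \mathcal T_h} h_K^2 \|\nabla q_h\|_{0,K}^2 \biggr)^{1/2},
\]
in which the first factor is bounded by $c\|v\|_1 \leq c\|q_h\|_0$ by the interpolation estimates, and the second factor is controlled by $c\snorm{q_h}$ through the structure of $\szero$ assumed in Hypothesis \ref{hypo:stab1}. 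For the concrete form \eqref{eq:stabBPB} with $k=1$ the second factor is exactly $\snorm{q_h}$; the general case is precisely the structural bound \eqref{eq:hypoVQmix}, now read for the quasi-interpolation $j_h$ rather than the divergence-free operator $\mathcal I_h^k$. Collecting the three displays and dividing by $\|q_h\|_0$ yields the assertion.

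The main obstacle is this last step, namely controlling the approximation defect of $\nabla \cdot (v - j_h v)$ tested against a continuous $q_h$ by $\snorm{q_h}$. Two points require care. First, the $v$ produced by the continuous inf-sup is not divergence-free, so one cannot reuse the operator $\mathcal I_h^k$ of Hypothesis \ref{hypo:stab1}-(\ref{item:stab1-divergence}) directly; one needs the analogue of \eqref{eq:hypoVQmix} for a general quasi-interpolation of $H^1_0(\Omega)^d$, which is exactly the content established in \cite{BurmanFernandez2008}. Second, for $k \geq 2$ the weighting by $h_K$ must be matched to the $k$-th order derivatives appearing in $\szero$, so the crude first-order estimate above has to be replaced by the higher-order interpolation and inverse estimates built into Hypothesis \ref{hypo:stab1}. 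The continuity of $q_h$ is what makes the whole reduction possible: it is precisely what kills the inter-element face terms and leaves only interior volume quantities that the symmetric stabilization is designed to dominate.
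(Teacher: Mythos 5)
You should first note that the paper itself does not prove this lemma: it is imported verbatim from \cite{BurmanFernandez2008} (``Lemma \ref{lemm:stabinfsup} is the weak inf-sup condition proved in \cite{BurmanFernandez2008}''), so the comparison is against the standard argument there, and your reduction does follow its skeleton: continuous inf-sup for the divergence, an $H^1$-stable quasi-interpolation $j_h$, the discrete part absorbed into the supremum, the interpolation defect absorbed into the stabilization. For $k=1$ with \eqref{eq:stabBPB} your proof is in fact complete, since then $\szero(q_h,q_h)=\sum_{K}h_K^2\|\nabla q_h\|_{0,K}^2$ exactly. One simplification: because $q_h\in Q_h\subset C(\overline\Omega)$ is piecewise polynomial, $q_h\in H^1(\Omega)$, so the integration by parts $(\nabla\cdot(v-j_hv),q_h)=-(v-j_hv,\nabla q_h)$ holds globally and the elementwise face-cancellation discussion is unnecessary.

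For $k\geq 2$, however, there is a genuine gap at precisely the step you flag, and the repairs you propose do not close it. The absorption $\bigl(\sum_K h_K^2\|\nabla q_h\|_{0,K}^2\bigr)^{1/2}\leq c\snorm{q_h}$ is simply false for \eqref{eq:stabBPB} with $k\geq2$: a nonconstant affine $q_h$ has $\snorm{q_h}=0$ but nonzero left-hand side, so no constant exists. ``Higher-order interpolation and inverse estimates'' cannot fix this — inverse estimates bound $k$-th derivatives by first derivatives, which is the wrong direction — and reading \eqref{eq:hypoVQmix} ``for $j_h$ rather than $\mathcal I_h^k$'' is not available from Hypothesis \ref{hypo:stab1}: that bound is posited only for $v\in V^{\mathrm{div}}$, whereas your $v$ from the continuous inf-sup satisfies $\nabla\cdot v=q_h\neq0$; asserting its analogue for a quasi-interpolation of all of $H^1_0(\Omega)^d$ is exactly the unproven crux, not a consequence of the hypothesis. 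The mechanism that actually works in \cite{BurmanFernandez2008} is structurally different from your weighted Cauchy--Schwarz with the full gradient: one splits $\nabla q_h=\pi_h\nabla q_h+(I-\pi_h)\nabla q_h$ with a suitable $H^1$-stable (quasi-)projection $\pi_h$ onto the velocity space, returns the resolved part to the discrete supremum via $(v,\pi_h\nabla q_h)=(\pi_h v,\nabla q_h)=-(\nabla\cdot\pi_h v,q_h)$, and only the weighted fluctuation $\bigl(\sum_K h_K^2\|(I-\pi_h)\nabla q_h\|_{0,K}^2\bigr)^{1/2}$ needs to be dominated by $\snorm{q_h}$ — a property verified stabilization by stabilization, which is why the paper defers to the cited reference rather than deriving the lemma from Hypothesis \ref{hypo:stab1} alone. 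So: correct and complete for $k=1$, correct in outline and honest about the obstacle for $k\geq2$, but the higher-order absorption step as written would fail and the decisive fluctuation-splitting idea is missing.
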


Lemma \ref{lemm:Stokesproj} is the error estimate of the Stokes projection in \eqref{eq:StokesProj}.
We omit the proof since it is the standard argument (e.g. \cite{FrancaStenberg1991,Garcia-archilla2020}).
We note that the constant $c_{\nu,\dzero}$ may depend on $\nu$ and $\dzero$.

\begin{lemma} \label{lemm:Stokesproj}
Let $V_h \times Q_h$ be the $\pk{k}/\pk{k}$-element for $k \geq 1$.
Suppose $(u^*, p^*) \in [H^{k+1}(\Omega)^d \cap H^1_0(\Omega)^d] \times [H^k(\Omega) \cap L^2_0(\Omega)]$.
Assume Hypothesis \ref{hypo:stab1} and $\dzero>0$.
Then, there exists a positive constant $c_{\nu,\dzero}$ independent of $h$ such that 
for any $h$ the Stokes projection $(\pro u^*, \pro p^*)$ of $(u^*, p^*)$
defined in \eqref{eq:StokesProj} satisfies
\begin{equation}
	\| \nabla (u^* - \pro u^*) \|_0, ~
	\| p^* - \pro p^* \|_0
	\leq c_{\nu, \dzero} h^{\ell} (\| u^* \|_{\ell+1} + \| p^* \|_{\ell}), \quad 1 \leq \ell \leq k.
	\label{eq:stokesprojRes1}
\end{equation}
\end{lemma}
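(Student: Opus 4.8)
The plan is to run the classical error analysis for pressure-stabilized mixed finite elements, splitting each error into an interpolation part and a discrete part, and then combining the coercivity of $a$ with the weak inf-sup condition of Lemma \ref{lemm:stabinfsup}. First I would take the interpolants $\mathcal I_h^k u^*$ and $\Pi_h p^*$ from Hypothesis \ref{hypo:stab1} and write
\[
	u^* - \pro u^* = \eta_u - \theta_u, \qquad p^* - \pro p^* = \eta_p - \theta_p,
\]
with $\eta_u = u^* - \mathcal I_h^k u^*$, $\eta_p = p^* - \Pi_h p^*$ and the discrete parts $\theta_u = \pro u^* - \mathcal I_h^k u^* \in V_h$, $\theta_p = \pro p^* - \Pi_h p^* \in Q_h$. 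Subtracting the interpolants in \eqref{eq:StokesProj} yields the error equations
\begin{align*}
	a(\theta_u, v_h) - (\theta_p, \nabla \cdot v_h) &= a(\eta_u, v_h) - (\eta_p, \nabla \cdot v_h), \\
	(\nabla \cdot \theta_u, q_h) + \ch(\theta_p, q_h) &= (\nabla \cdot \eta_u, q_h) - \ch(\Pi_h p^*, q_h),
\end{align*}
for all $v_h \in V_h$ and $q_h \in Q_h$.

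Next comes the energy step: test the first equation with $v_h = \theta_u$, the second with $q_h = \theta_p$, and add, so that $-(\theta_p, \nabla \cdot \theta_u) + (\nabla \cdot \theta_u, \theta_p)$ cancels, giving
\[
	\nu \| \nabla \theta_u \|_0^2 + \dzero \snorm{\theta_p}^2
	= a(\eta_u, \theta_u) - (\eta_p, \nabla \cdot \theta_u) + (\nabla \cdot \eta_u, \theta_p) - \ch(\Pi_h p^*, \theta_p).
\]
The terms $a(\eta_u, \theta_u)$ and $(\eta_p, \nabla \cdot \theta_u)$ are handled by Cauchy--Schwarz and Young's inequality, their $\theta_u$-parts absorbed into $\nu \| \nabla \theta_u \|_0^2$. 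For the coupling term $(\nabla \cdot \eta_u, \theta_p)$ I would invoke \eqref{eq:hypoVQmix} together with \eqref{eq:hypoVQvinterpo} (this is precisely where the tailored interpolation $\mathcal I_h^k$ of Hypothesis \ref{hypo:stab1}-(\ref{item:stab1-divergence}) is needed), bounding it by $c h^\ell |u^*|_{\ell+1} \snorm{\theta_p}$; the last term is controlled by the Schwarz inequality for $\szero$ and the stability estimate \eqref{eq:s0projstability}, giving $\dzero \snorm{\Pi_h p^*} \snorm{\theta_p} \le c \dzero h^\ell \|p^*\|_\ell \snorm{\theta_p}$. Absorbing the $\snorm{\theta_p}$-factors into $\dzero \snorm{\theta_p}^2$ then bounds $\| \nabla \theta_u \|_0$ and $\snorm{\theta_p}$ by $h^\ell(\|u^*\|_{\ell+1} + \|p^*\|_\ell)$, with a constant that depends on $\nu$ and $\dzero$ through the Young absorptions.

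Finally, to recover the full $L^2$ bound on the pressure I would estimate $\| \theta_p \|_0$ via the weak inf-sup condition of Lemma \ref{lemm:stabinfsup} applied to $q_h = \theta_p$: the first error equation gives $(\nabla \cdot v_h, \theta_p) = a(\theta_u - \eta_u, v_h) + (\eta_p, \nabla \cdot v_h) \le c(\nu \| \nabla \theta_u \|_0 + \nu \| \nabla \eta_u \|_0 + \|\eta_p\|_0) \| v_h \|_1$, so after dividing by $\| v_h \|_1$ and taking the supremum, the inf-sup bound reads $\| \theta_p \|_0 \le c(\nu \| \nabla \theta_u \|_0 + \nu \| \nabla \eta_u \|_0 + \|\eta_p\|_0 + \snorm{\theta_p})$. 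Inserting the energy-step bounds and the interpolation estimates \eqref{eq:s0projapprox} and \eqref{eq:hypoVQvinterpo} at order $h^\ell$, then using the triangle inequalities $\|\nabla(u^* - \pro u^*)\|_0 \le \|\nabla \eta_u\|_0 + \|\nabla \theta_u\|_0$ and $\|p^* - \pro p^*\|_0 \le \|\eta_p\|_0 + \|\theta_p\|_0$, gives \eqref{eq:stokesprojRes1}. The main obstacle is the coupling term $(\nabla \cdot \eta_u, \theta_p)$: making it controllable by the stabilization seminorm $\snorm{\theta_p}$ alone, rather than by an uncontrolled $\|\nabla \theta_p\|_0$, is exactly what forces the structural assumption \eqref{eq:hypoVQmix} and the divergence-compatible interpolation $\mathcal I_h^k$, and tracking how the $\nu$- and $\dzero$-weighted Young absorptions and the inf-sup step accumulate is what makes the final constant $c_{\nu,\dzero}$ depend on $\nu$, $1/\nu$, $\dzero$ and $1/\dzero$.
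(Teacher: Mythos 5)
Your proposal is correct, and it is precisely the standard argument the paper itself points to when it omits the proof (cf.\ \cite{FrancaStenberg1991,Garcia-archilla2020}): subtract interpolants, do the energy step with $(v_h,q_h)=(\theta_u,\theta_p)$ so the coupling terms cancel, control the consistency terms by Hypothesis \ref{hypo:stab1}, and recover $\|\theta_p\|_0$ from the weak inf-sup condition of Lemma \ref{lemm:stabinfsup}. One caveat worth fixing: your treatment of the coupling term $(\nabla\cdot\eta_u,\theta_p)$ via \eqref{eq:hypoVQmix} tacitly assumes $u^*\in V^{\mathrm{div}}$, since $\mathcal I_h^k$ is defined only on solenoidal fields, whereas the lemma as stated allows arbitrary $u^*\in H^{k+1}(\Omega)^d\cap H^1_0(\Omega)^d$. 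This is harmless for every use in the paper (the projection is applied only to the divergence-free exact velocity $u(t)$), but if you want the statement verbatim you should either add $\nabla\cdot u^*=0$ to your hypotheses or replace that step by the crude bound $|(\nabla\cdot\eta_u,\theta_p)|\leq \sqrt{d}\,\|\nabla\eta_u\|_0\|\theta_p\|_0$ and close the resulting circularity by inserting the inf-sup bound for $\|\theta_p\|_0$ into the energy inequality with a small Young parameter; the constants then still depend only on $\nu$, $1/\nu$, $\dzero$ and $1/\dzero$, which is all that $c_{\nu,\dzero}$ permits.
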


Lemma \ref{lemm:twofunc} is a direct consequence of Lemma 4.5 in \cite{AchdouGuermond2000}, and 
Lemma \ref{lemm:bijectiveMix}-(\ref{item:jacobiest}) in this paper.
\begin{lemma}
\label{lemm:twofunc}
Let $1\leq q <\infty$, $1\leq p \leq \infty$, $1/p+1/p'=1$ and
$w_i\in W_0^{1,\infty}(\Omega)^d$, $i=1,2$.
Under the condition $\Delta t |w_i|_{1,\infty}\leq 1/4$,
it holds that,
for $v \in W^{1,qp'}(\Omega)^d$,
\begin{equation*}\label{eq:twofunc}
	\| v \circ X_1(w_1) - v \circ X_1(w_2)\|_{0,q} \leq 
	2^{1/(qp')}
	\Delta t \|w_1-w_2\|_{0,pq} 
	\|\nabla v \|_{0,qp'},
\end{equation*} 
where $X_1(\cdot)$ is defined in \eqref{eq:x1def}.
\end{lemma}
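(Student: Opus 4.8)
The plan is to reduce the estimate to a pointwise bound coming from the fundamental theorem of calculus, and then to pass to $L^q$ norms via H\"older's inequality and a change of variables. For $\theta\in[0,1]$ set $W_\theta:=\theta w_1+(1-\theta)w_2$ and note that, by convexity of the seminorm, $|W_\theta|_{1,\infty}\leq\max(|w_1|_{1,\infty},|w_2|_{1,\infty})$, so that $\Delta t\,|W_\theta|_{1,\infty}\leq 1/4$ for every $\theta$. Hence each $X_1(W_\theta)$ falls under the hypotheses of Lemma \ref{lemm:bijectiveMix}: it maps $\Omega$ into $\Omega$ and its Jacobian $J_\theta$ satisfies $1/2\leq J_\theta\leq 3/2$. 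Writing $\phi(\theta):=v(x-\Delta t\,W_\theta(x))$, the identity $v\circ X_1(w_1)-v\circ X_1(w_2)=\phi(1)-\phi(0)=\int_0^1\phi'(\theta)\,d\theta$ together with the chain rule gives the pointwise estimate
\[
	|(v\circ X_1(w_1))(x)-(v\circ X_1(w_2))(x)|
	\leq \Delta t\,|w_1(x)-w_2(x)|\int_0^1|\nabla v(x-\Delta t\,W_\theta(x))|\,d\theta.
\]

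Next I would take the $L^q(\Omega)$ norm of both sides. The right-hand side is a product of $|w_1-w_2|$ and $g(x):=\int_0^1|\nabla v(x-\Delta t\,W_\theta(x))|\,d\theta$; applying the generalized H\"older inequality with the exponents $pq$ and $qp'$, which satisfy $\frac{1}{pq}+\frac{1}{qp'}=\frac1q$, separates the two factors and yields
\[
	\|v\circ X_1(w_1)-v\circ X_1(w_2)\|_{0,q}
	\leq \Delta t\,\|w_1-w_2\|_{0,pq}\,\|g\|_{0,qp'}.
\]

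It then remains to bound $\|g\|_{0,qp'}$ by $2^{1/(qp')}\|\nabla v\|_{0,qp'}$. First, Jensen's inequality applied to the convex function $t\mapsto t^{qp'}$ against the probability measure $d\theta$ on $[0,1]$ moves the power inside the integral, so that after Fubini's theorem
\[
	\|g\|_{0,qp'}^{qp'}\leq\int_0^1\int_\Omega|\nabla v(x-\Delta t\,W_\theta(x))|^{qp'}\,dx\,d\theta.
\]
For each fixed $\theta$ I would substitute $y=X_1(W_\theta)(x)$; by Lemma \ref{lemm:bijectiveMix} this is admissible and $dx=J_\theta^{-1}\,dy\leq 2\,dy$, so each inner integral is at most $2\|\nabla v\|_{0,qp'}^{qp'}$. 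Integrating in $\theta$ and taking the $(qp')$-th root produces exactly the factor $2^{1/(qp')}$, which combined with the displayed H\"older bound gives the claim.

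The only genuinely delicate point is the uniform control of the change of variables across the whole family $\{X_1(W_\theta)\}_{\theta\in[0,1]}$: one must verify that every interpolated field $W_\theta$ still obeys $\Delta t\,|W_\theta|_{1,\infty}\leq 1/4$, so that Lemma \ref{lemm:bijectiveMix}-(\ref{item:jacobiest}) applies with the uniform bound $J_\theta\geq 1/2$. This is what pins down the constant $2^{1/(qp')}$ and is precisely the ingredient contributed here on top of the cited Lemma 4.5 of \cite{AchdouGuermond2000}.
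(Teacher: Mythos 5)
Your proof is correct and is essentially the paper's own route: the paper disposes of this lemma by citing Lemma 4.5 of \cite{AchdouGuermond2000} together with Lemma \ref{lemm:bijectiveMix}-(\ref{item:jacobiest}), and your segment-interpolation argument (setting $W_\theta=\theta w_1+(1-\theta)w_2$, applying H\"older with $\frac{1}{pq}+\frac{1}{qp'}=\frac{1}{q}$, and changing variables with the uniform bound $J_\theta\geq 1/2$) is precisely the standard proof underlying that citation, including the key point that $\Delta t\,|W_\theta|_{1,\infty}\leq 1/4$ keeps every interpolated map $X_1(W_\theta)$ inside $\Omega$ --- which is exactly what makes the argument work on a possibly nonconvex polygonal domain. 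The only cosmetic additions one might make are a one-line mollification remark justifying the fundamental theorem of calculus along segments for $v\in W^{1,qp'}$ when $qp'<\infty$, and the observation that the endpoint case $p=1$ (so $qp'=\infty$) follows directly from the pointwise Lipschitz bound with the constant degenerating to $2^{0}=1$.
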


\begin{lemma} \label{lemm:DRlike}
Let $w_i \in W^{1,\infty}_0(\Omega)^d$ and $X_1(w_i)$ be the mapping defined in \eqref{eq:x1def}, $i=1, 2$.
Under the condition $\Delta t |w_i|_{1,\infty}\leq 1/4$,
there exists a positive constant $c$ independent of $\Delta t$
such that for $v \in L^2(\Omega)^d$ 
\begin{equation}
	\| v \circ X_1(w_1) - v \circ X_1 (w_2) \|_{-1} 
	\leq c \Delta t \| v \|_0 \| w_1 - w_2 \|_{1,\infty}.
	\label{eq:drlike}
\end{equation}
\end{lemma}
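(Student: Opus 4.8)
The plan is to argue by duality, since $\|\cdot\|_{-1}$ is the dual norm of $H^1_0(\Omega)^d$. By density it suffices to treat $v\in C^\infty(\overline\Omega)^d$, because both sides of \eqref{eq:drlike} are continuous in $v$ for the $L^2$-topology: the maps $X_1(w_i)$ are bi-Lipschitz bijections of $\Omega$ with Jacobians bounded above and below (Lemma \ref{lemm:bijectiveMix}), so $v\mapsto v\circ X_1(w_i)$ is bounded on $L^2$. Fixing $\phi\in H^1_0(\Omega)^d$ with $\|\phi\|_1\le1$, I would change variables in each composition, writing $\Phi_i:=X_1(w_i)^{-1}$ and $g_i:=|\det D\Phi_i|$, so that
\[
	(v\circ X_1(w_1)-v\circ X_1(w_2),\phi)=\bigl(v,\,(\phi\circ\Phi_1)g_1-(\phi\circ\Phi_2)g_2\bigr).
\]
The task thus reduces to proving $\|(\phi\circ\Phi_1)g_1-(\phi\circ\Phi_2)g_2\|_0\le c\,\Delta t\,\|w_1-w_2\|_{1,\infty}\|\phi\|_1$, that is, to bounding the difference of the two test densities obtained by pushing $\phi$ forward along $X_1(w_1)$ and $X_1(w_2)$.

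To expose the factor $\|w_1-w_2\|_{1,\infty}$ — rather than $|w_1|_{1,\infty}+|w_2|_{1,\infty}$, which a crude splitting into $X_1(w_1)-\mathrm{id}$ and $X_1(w_2)-\mathrm{id}$ would produce — I would compose the two flows into the single near-identity map $Y:=X_1(w_1)\circ\Phi_2$. A short computation gives $Y(z)-z=-\Delta t\,(w_1-w_2)(\Phi_2(z))$, hence $\|Y-\mathrm{id}\|_{0,\infty}\le c\,\Delta t\|w_1-w_2\|_{0,\infty}$, while $\det DY=(J_1/J_2)\circ\Phi_2$ with $J_i:=\det(I-\Delta t\,Dw_i)$; since $J_1$ and $J_2$ are now evaluated at the \emph{same} point, $\|\det DY-1\|_{0,\infty}\le c\,\Delta t\,|w_1-w_2|_{1,\infty}$. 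This single-point comparison of Jacobians is the mechanism that yields the required $\|w_1-w_2\|_{1,\infty}$-dependence. After the change of variables $z=X_1(w_2)(x)$ the pairing becomes $(v\circ Y-v,\psi)$ with the test density $\psi:=(\phi\circ\Phi_2)g_2$ (which satisfies $\|\psi\|_0\le c\|\phi\|_0$), and a further change of variables by $Y$ gives $(v\circ Y-v,\psi)=\bigl(v,\,(\psi\circ Y^{-1})\widehat g-\psi\bigr)$ with $\widehat g:=|\det DY^{-1}|$. Splitting $(\psi\circ Y^{-1})\widehat g-\psi=(\psi\circ Y^{-1}-\psi)\widehat g+\psi(\widehat g-1)$ isolates a Jacobian term and a transport term.

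The Jacobian term is immediate: $\|\psi(\widehat g-1)\|_0\le \|\psi\|_0\,\|\widehat g-1\|_{0,\infty}\le c\,\Delta t\,|w_1-w_2|_{1,\infty}\|\phi\|_0$, using the single-point bound above. The transport term $\psi\circ Y^{-1}-\psi$ should contribute the factor $\|Y^{-1}-\mathrm{id}\|_{0,\infty}\le c\,\Delta t\|w_1-w_2\|_{0,\infty}$ together with one derivative of the test object, via a mean value argument in the spirit of Lemma \ref{lemm:twofunc}. The main obstacle, and the point I expect to be delicate, is to carry this out so that the surviving derivative falls on $\phi$ and on first derivatives of $w_1-w_2$ only, and not on the Jacobian weight $g_2$, whose gradient would formally carry second spatial derivatives of $w_2$. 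This is precisely the Douglas–Russell-type difficulty the lemma is named for; I would resolve it by treating $v\circ Y-v$ directly in the duality pairing and transferring the single derivative onto $\phi$ through an integration by parts and change of variables (the boundary terms vanishing because $\phi\in H^1_0$ and $w_i$ vanish on $\partial\Omega$), invoking once more the single-point comparison of Jacobians so that every coefficient stays first order in $w_1-w_2$. Granting this estimate, $\|(\phi\circ\Phi_1)g_1-(\phi\circ\Phi_2)g_2\|_0\le c\,\Delta t\,\|w_1-w_2\|_{1,\infty}\|\phi\|_1$ follows; taking the supremum over $\phi$ and undoing the density reduction of the first step yields \eqref{eq:drlike}.
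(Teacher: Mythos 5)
Your setup---duality, the reduction to a difference of push-forward densities, and the composite map $Y=X_1(w_1)\circ\Phi_2$---is sound, and your single-point Jacobian identity $\det DY=(J_1/J_2)\circ\Phi_2$ is correct, so the term $\psi(\widehat g-1)$ is indeed handled. The genuine gap is the transport term $(\psi\circ Y^{-1}-\psi)\widehat g$, which you defer to an unspecified ``mean value argument \dots\ integration by parts.'' This step does not close as sketched: since $\psi=(\phi\circ\Phi_2)g_2$ and $\Phi_2\circ Y^{-1}=\Phi_1$, one has $\psi\circ Y^{-1}-\psi=\bigl[\phi\circ\Phi_1-\phi\circ\Phi_2\bigr](g_2\circ Y^{-1})+(\phi\circ\Phi_2)\bigl(J_2^{-1}\circ\Phi_1-J_2^{-1}\circ\Phi_2\bigr)$. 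The first piece is fine (a Lemma \ref{lemm:twofunc}--type bound with the derivative falling on $\phi$), but the second is a two-point difference of $J_2^{-1}$, a function that is merely $L^\infty$ under the hypothesis $w_2\in W^{1,\infty}_0(\Omega)^d$: a mean-value bound would need $\nabla J_2$, i.e.\ second spatial derivatives of $w_2$, which are not assumed, while the only unconditional bound, $2\|J_2^{-1}-1\|_{0,\infty}\le c\,\Delta t\,|w_2|_{1,\infty}$, has the wrong scaling---it yields $|w_2|_{1,\infty}$ in place of $\|w_1-w_2\|_{1,\infty}$, and that factor is exactly what Lemma \ref{lemm:dtest1} relies on, since there $\|w^n-w^{n-1}\|_{1,\infty}=O(\Delta t^{1/2})$ supplies the extra power of $\Delta t$. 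Your closing ``granting this estimate'' therefore grants precisely the hard part, and the integration-by-parts escape is not carried out; note that every natural regrouping of your composite-map pairing reproduces the same term $J_2^{-1}\circ\Phi_1-J_2^{-1}\circ\Phi_2$, so the difficulty is intrinsic to your decomposition, not to its bookkeeping.

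The paper's proof avoids this by never transporting the Jacobian weight. It splits the density difference in the opposite order, $\bigl[\Phi\circ F_1^{-1}-\Phi\circ F_2^{-1}\bigr]J_1^{-1}+(\Phi\circ F_2^{-1})(J_1^{-1}-J_2^{-1})$: the weight $J_1^{-1}$ is pulled out of the transport difference in $L^\infty$, so the single surviving derivative lands on the bare test function, $\|\Phi\circ F_1^{-1}-\Phi\circ F_2^{-1}\|_0\le c\,\Delta t\,\|\nabla(\Phi\circ F_1^{-1})\|_0\|w_1-w_2\|_{0,\infty}$ via Lemma \ref{lemm:twofunc} in \eqref{eq:prDRlike3}, with $\|\nabla(\Phi\circ F_1^{-1})\|_0\le c\|\nabla\Phi\|_0$ from the Lipschitz bound on $F_1^{-1}$ in \eqref{eq:prDRlike4}; the Jacobian difference is then bounded through the algebraic identity $J_1^{-1}-J_2^{-1}=J_1^{-1}J_2^{-1}(J_2-J_1)$ and the same-point determinant estimate \eqref{eq:prDRlikeJac}, so no derivative ever falls on a Jacobian and no integration by parts is needed. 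Incidentally, your composite-map identity is a clean way of justifying that the two Jacobians may be compared at a common point---a detail that \eqref{eq:prDRlike2} passes over tacitly---but it repairs only the Jacobian half of the estimate while spoiling the transport half. If you reorganize your splitting in the paper's order (transport of $\phi$ alone first, same-point Jacobian comparison second), your argument closes with the tools you already cite; as written, the central transport estimate of the proposal is missing.
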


\begin{remark}
Lemma \ref{lemm:DRlike} is a generalization of Lemma 1 in \cite{DouglasRussell1982}.
When $w_1 = w$ and $w_2 = 0$,
\begin{equation}
	\| v \circ X_1(w) - v  \|_{-1} 
	\leq c \Delta t \| v \|_0 \| w\|_{1,\infty},
	\label{eq:DR}
\end{equation}
which is Lemma 1 in \cite{DouglasRussell1982}.
\end{remark}

\begin{proof}[Proof of Lemma \ref{lemm:DRlike}]
We denote $X_1(w_i)$ by $F_i$
and the Jacobian of $X_1(w_i)$ by $J_i$ for $i=1,2$,
which is positive 
because of Lemma \ref{lemm:bijectiveMix}-(\ref{item:jacobiest}).

In view of the definition
\begin{equation}
	\| v \circ F_1 - v \circ F_2 \|_{-1} 
	= \sup_{\Phi \in H^1_0(\Omega)^d \setminus \{0\} } \frac{(v \circ F_1 - v \circ F_2, \Phi)}{ \| \Phi \|_1 },
	\label{eq:h10justdef}
\end{equation}
we estimate $(v \circ F_1 - v \circ F_2, \Phi)$.
By the change of variable $y = F_i(x)$
and noting that $F_i: \Omega \to \Omega$ is bijective for $i=1,2$ (Lemma \ref{lemm:bijectiveMix}-(\ref{item:bijective})),
we have 
\begin{align}
	(v \circ F_1 - v\circ F_2, \Phi)
	&= \bigl( v, (\Phi \circ F_1^{-1}) J_1^{-1} - (\Phi \circ F_2^{-1}) J_2^{-1} \bigr) \notag \\
	& \leq \|v\|_0 \| (\Phi \circ F_1^{-1}) J_1^{-1} - (\Phi \circ F_2^{-1}) J_2^{-1} \|_0
	=: \|v\|_0 I_1. 
	\label{eq:prDRlike1}
\end{align}
The boundedness of the Jacobian (Lemma \ref{lemm:bijectiveMix}-(\ref{item:jacobiest})) 
yields
\begin{align}
	I_1 
	\leq & \| (\Phi \circ F_1^{-1}) J_1^{-1} - (\Phi \circ F_2^{-1}) J_1^{-1} \|_0 
		+ \| (\Phi \circ F_2^{-1}) (J_1^{-1} - J_2^{-1}) \|_0  \notag \\ 
	\leq &  \| \Phi \circ F_1^{-1} - \Phi \circ F_2^{-1} \|_0 \|J_1^{-1}\|_{0,\infty} + \| \Phi \circ F_2^{-1} \|_0 \| J_1^{-1} - J_2^{-1} \|_{0,\infty} \notag \\
	\leq & c \| \Phi \circ F_1^{-1}  - \Phi \circ F_2^{-1}  \|_0 
		+ c \| \Phi \|_0 \| J_1 - J_2 \|_{0,\infty}, 
		\label{eq:prDRlike2}
\end{align}
where we have used  
$J_1^{-1} - J_2^{-1} = J_1^{-1} J_2^{-1}(J_2 - J_1)$.
By the change of variable $x = F_2^{-1}(y)$,
\begin{align}
	& \| \Phi \circ F_1^{-1}  - \Phi \circ F_2^{-1} \|_0
	= \| \Phi \circ F_1^{-1} \circ F_2 \circ F_2^{-1}  - \Phi \circ F_1^{-1} \circ F_1 \circ F_2^{-1} \|_0
	\notag \\
	= & \bigl\| \bigl[ (\Phi \circ F_1^{-1}) \circ F_2  - (\Phi \circ F_1^{-1}) \circ F_1 \bigr] J_2^{1/2} \bigr\|_0 \notag \\
	\leq & c \Delta t \| \nabla(\Phi \circ F_1^{-1}) \|_0 \| w_1 - w_2 \|_{0,\infty},
	\label{eq:prDRlike3}
\end{align}
where we have used Lemma \ref{lemm:twofunc} with $q=2$, $p=\infty$, $p'=1$ and $v = \Phi \circ F_1^{-1}$.
We note that
\[
	|F_1(x_1) - F_1(x_2)| \geq |x_1 - x_2| - | w_1(x_1) - w_1(x_2) | \Delta t
	\geq (1-|w_1|_{1,\infty} \Delta t) |x_1 - x_2|,
\]
and $|w_1|_{1,\infty} \Delta t \leq 1/4$,
which implies
\[
	|F_1^{-1}(y_1) - F_1^{-1}(y_2)| \leq c |y_1 - y_2|
\]
and thus it holds that with the estimate of $J_1$ 
\begin{equation}
\begin{split}
	\bigl\| \nabla(\Phi \circ F_1^{-1}) \bigr\|_0 
	&= \bigl\| \bigl[ (\nabla \Phi) \circ F_1^{-1} \bigr] \nabla(F_1^{-1}) \bigr\|_0 \\ 
	&\leq \| (\nabla \Phi) \circ F_1^{-1} \|_0 \| \nabla (F_1^{-1}) \|_{0,\infty}
	\leq c \| \nabla \Phi \|_0.
\end{split}
	\label{eq:prDRlike4}
\end{equation}
We then have from \eqref{eq:prDRlike3} and \eqref{eq:prDRlike4}
\begin{equation}
	\| \Phi \circ F_1^{-1}  - \Phi \circ F_2^{-1} \|_0
	\leq c \Delta t \| \nabla \Phi \|_0 \| w_1 - w_2 \|_{0,\infty}.
	\label{eq:prDRlikeEsti1}
\end{equation}

From the definition of Jacobian $\det (\delta_{mn} - \partial w_m/ \partial x_n \Delta t)$, where $w=w_1$ or $w_2$,
and $\Delta t |w_1|_{1,\infty}$, $\Delta t |w_2|_{1,\infty} \leq 1/4$,
\begin{equation}
	\| J_1 - J_2 \|_{0,\infty} \leq c \Delta t \| w_1 - w_2 \|_{1,\infty}.
	\label{eq:prDRlikeJac}
\end{equation}

Now the conclusion \eqref{eq:drlike} follows
from 
\eqref{eq:h10justdef},
\eqref{eq:prDRlike1}, and \eqref{eq:prDRlike2} with
\eqref{eq:prDRlikeEsti1} and \eqref{eq:prDRlikeJac}.
\end{proof}

\subsection{Proof of Theorem \ref{thm:pres}}
Let $(z_h(t), r_h(t))$ be the Stokes projection $(\pro u(t), \pro p(t))$ of $(u(t), p(t))$ defined in \eqref{eq:StokesProj}.
We use the same notation in \eqref{eq:errornotation} after replacing $(z_h(t), r_h(t))$.
We note that the estimate
\begin{equation}
	\| e_h \|_{\ell^\infty(L^2)}, ~
	\| \nabla \til e_h \|_{\ell^2(L^2)}, ~
	|\eps_h|_{\ell^2(s)}
	\leq c_{\nu,\dzero} (\Delta t + h^k)
	\label{eq:ehproj}
\end{equation}
still holds for the new definition because, from Theorem \ref{thm:veloSmallVis} and Lemma \ref{lemm:Stokesproj},
\[
	\| e_h \|_{\ell^\infty(L^2)} 
	= \| u_h - \pro u \|_{\ell^\infty(L^2)} 
	\leq \| u_h - u \|_{\ell^\infty(L^2)}  + \| u - \pro u \|_{\ell^\infty(L^2)} 
	\leq c_{\nu,\dzero} (\Delta t + h^k).
\]
The estimate for $\| \nabla \til e_h \|_{\ell^2(L^2)}$ is done by the same way.
For $|\eps_h|_{\ell^2(s)}$,
from Hypothesis \ref{hypo:stab1}, Theorem \ref{thm:veloSmallVis} and Lemma \ref{lemm:Stokesproj},
with $\Pi_h$ being the interpolation operator in Hypothesis \ref{hypo:stab1},
\begin{align*}
	|\eps_h|_{\ell^2(s)} 
	&= |p_h - \pro p|_{\ell^2(s)} 
	\leq |p_h - \Pi_h p|_{\ell^2(s)} + |\Pi_h p - \pro p|_{\ell^2(s)} \\
	& \leq |p_h - \Pi_h p|_{\ell^2(s)} + c\|\Pi_h p - \pro p \|_{\ell^2(L^2)} \\
	&\leq |p_h - \Pi_h p|_{\ell^2(s)} + c\|\Pi_h p - p \|_{\ell^2(L^2)} + c\| p - \pro p \|_{\ell^2(L^2)}
	\leq c_{\nu,\dzero} (\Delta t + h^k).
\end{align*}

With new $(z_h^n, r_h^n)$, we have the following error equations for $n=0,1,...,N_T-1$ (cf.\eqref{eq:erroreq1}):
\begin{subequations}
\label{eq:erroreqSPro}
\begin{alignat}{2}
	\biggl( \frac{\til e_h^{n+1} - (i_h^T e_h^{n}) \circ X_1^n}{\Delta t}, v_h \biggr) + a(\til e_h^{n+1} ,v_h) + (\nabla \psi_h^{n}, v_h) 
	& = 
	( R_{11}^{n+1} + R_{12}^{n+1}, v_h ),
	~ &&\forall v_h \in V_h, 
	\label{subeq:erroreqSProa}\\
	\frac{e_h^{n+1} - \til e_h^{n+1}}{\Delta t} + \nabla(\eps_h^{n+1} - \psi_h^{n}) &= 0, \label{subeq:erroreqSProb}\\
	(e_h^{n+1}, \nabla q_h) - \ch(\eps_h^{n+1}, q_h)  &= 0, 
	~ &&\forall q_h \in Q_h, \label{subeq:erroreqSProc}
\end{alignat}
\end{subequations}
where 
$R_{11}^{n+1}$ and $R_{12}^{n+1}$ are defined in \eqref{eq:R11} and \eqref{eq:R12}, respectively.

Immediately we have from Lemma \ref{lemm:stabinfsup}
\begin{align}
	\| \eps_h^{n+1} \|_0 \leq &
	c \sup_{v_h \in V_h \setminus \{0\}} \frac{(\nabla \eps_h^{n+1}, v_h)}{\|v_h\|_1} 
	+ c \szero(\eps_h^{n+1}, \eps_h^{n+1})^{1/2}. \label{eq:epsInfsup}
\end{align}
For the estimate of $(\nabla \eps_h^{n+1}, v_h)/\|v_h\|_1$,
the following error equation is obtained from
\eqref{subeq:erroreqSProa} and \eqref{subeq:erroreqSProb}:
\begin{equation}
\begin{split}
	 \biggl( \frac{e_h^{n+1} - e_h^n}{\Delta t}, v_h \biggr)
		+ \biggl( \frac{i_h^T e_h^n - (i_h^T e_h^n) \circ X_1^n}{\Delta t}, v_h \biggr)
		+ a(\til e_h^{n+1}, v_h) + (\nabla \eps_h^{n+1}, v_h) \\
		= ( R_{11}^{n+1} + R_{12}^{n+1}, v_h ), \quad \forall v_h \in V_h.
\end{split}
\label{eq:erroreqEps}
\end{equation}
Here we note that $(i_h^T e_h^n, v_h) = (e_h^n, v_h)$ for $v_h \in V_h$.

The key is the estimate of $\| \frac{1}{\Delta t} (e_h^{n+1} - e_h^n) \|_{-1}$,
which is bounded by $L^2$-norm.
Let us use the notation
$d_t$ in \eqref{eq:dtdef} to get error equations for $d_t e_h^n$ and $d_t \varepsilon_h^n$.
In \eqref{eq:erroreqSPro}, 
we note that  
\[
	(i_h^T e_h^n) \circ X_1^n - (i_h^T e_h^{n-1}) \circ X_1^{n-1} = ( i_h^T d_t e_h^n) \circ X_1^n + (i_h^T e_h^{n-1}) \circ X_1^{n} - (i_h^T e_h^{n-1}) \circ X_1^{n-1}
\]
to obtain for $n=1,2,...,N_T-1$
\begin{subequations}
\label{eq:erroreq2}
\begin{align}
	\biggl( \frac{d_t \til e_h^{n+1} - (i_h^T d_t e_h^{n}) \circ X_1^n}{\Delta t}, v_h \biggr) 
	+ a( d_t \til e_h^{n+1} ,v_h) + (\nabla d_t \psi_h^{n}, v_h) 
	&= \bigl \langle R_2^{n+1}, v_h \bigr \rangle, 
		&& \forall v_h \in V_h, \label{subeq:erroreq2a} \\
	\frac{d_t e_h^{n+1}- d_t\til e_h^{n+1}}{\Delta t} + \nabla(d_t \eps_h^{n+1} - d_t \psi_h^{n}) &= 0, && \label{subeq:erroreq2b}\\
	(d_t e_h^{n+1}, \nabla q_h) - s_h(d_t \eps_h^{n+1}, q_h) &= 0, 
		&& \forall q_h \in Q_h, \label{subeq:erroreq2c}
\end{align}
\end{subequations}
where 
\begin{align}
	\bigl \langle R_2^{n+1}, v_h \bigr \rangle := & 
	\frac{1}{\Delta t} \left( (i_h^T e_h^{n-1}) \circ X_1^{n} - (i_h^T e_h^{n-1}) \circ X_1^{n-1}, v_h \right) 
	\notag \\ &
	 + (R_{11}^{n+1} - R_{11}^{n} + R_{12}^{n+1} - R_{12}^{n}, v_h) .
	 \label{eq:R2def}
\end{align}

The estimate for $\| R_{11}^n - R_{11}^{n-1} \|_0$ is found in \cite{AchdouGuermond2000} when the trajectory map is the solution of the ODE in \eqref{eq:xode}, and
we can obtain the same order for the Euler approximated map $X_1(\cdot)$ \cite{Misawa2016}.
We give a proof in Appendix \ref{subsec:proofoflemmadtR11R12} for completeness.
\begin{lemma} \label{lemm:dtR11R12}
Suppose that $w \in W^{2,\infty}(L^\infty) \cap C(W^{1,\infty}_0)$
and $\Delta t |w|_{C(W^{1,\infty})} \leq 1/4$.
Then, 
there exists a constant $c$ depending on 
the norm $\| w \|_{W^{2,\infty}(L^\infty)}$ 
such that
\begin{align}
	&\| R_{11}^n - R_{11}^{n-1} \|_0
	\leq 
	c
	\Delta t^{3/2} \| u \|_{Z^3(t^{n-2}, t^n)}, 
	\quad \forall u \in Z^3,
	\label{eq:dtR11} \\
	&\biggl\| \frac{v^n - v^{n-1} \circ X_1^{n-1}}{\Delta t} - \frac{v^{n-1} - v^{n-2} \circ X_1^{n-2}}{\Delta t} \biggr\|_0  \notag \\
	\leq &  
	c \Delta t^{1/2} 
	\| v \|_{H^2(t^{n-2}, t^n; L^2) \cap H^1(t^{n-2}, t^n; H^{1})} 
	+ c \Delta t \| v^{n-2} \|_{1}, \quad \forall v \in H^2(L^2) \cap H^1(H^1).
	\label{eq:dtR12}
\end{align}
\end{lemma}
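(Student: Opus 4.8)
For \eqref{eq:dtR12} the estimate follows directly from the tools already assembled, so I would dispatch it first. Writing $Q^n := (v^n - v^{n-1}\circ X_1^{n-1})/\Delta t$ and rearranging the second difference, one has
\[
	\Delta t\,(Q^n - Q^{n-1}) = \bigl[\, d_t v^n - (d_t v^{n-1})\circ X_1^{n-1} \,\bigr] - \bigl[\, v^{n-2}\circ X_1^{n-1} - v^{n-2}\circ X_1^{n-2} \,\bigr],
\]
using the splitting $v^{n-1}\circ X_1^{n-1} - v^{n-2}\circ X_1^{n-2} = (d_t v^{n-1})\circ X_1^{n-1} + (v^{n-2}\circ X_1^{n-1} - v^{n-2}\circ X_1^{n-2})$. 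In the first bracket I would add and subtract $d_t v^{n-1}$: the piece $d_t v^n - d_t v^{n-1}$ is bounded by \eqref{eq:dtsechalfest}, and $d_t v^{n-1} - (d_t v^{n-1})\circ X_1^{n-1}$ by Lemma \ref{lemm:twofunc} (with $w_2=0$) followed by \eqref{eq:dthalfest} applied to $\nabla d_t v^{n-1}$; after dividing by $\Delta t$ both give $c\Delta t^{1/2}\|v\|_{H^2(L^2)\cap H^1(H^1)}$. The second bracket is Lemma \ref{lemm:twofunc} with $w_1=w^{n-1}$, $w_2=w^{n-2}$, using $\|w^{n-1}-w^{n-2}\|_{0,\infty}\le c\Delta t$ from the $W^{2,\infty}(L^\infty)$ regularity, which yields $c\Delta t\,\|v^{n-2}\|_1$ after division by $\Delta t$.

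For \eqref{eq:dtR11} a naive termwise differencing of $R_{11}^n$ (from \eqref{eq:R11}) produces only $O(\Delta t^{1/2})$, since $\partial_t u^n - \partial_t u^{n-1}$ is already that size by \eqref{eq:dthalfest}; the near-cancellation inside the consistency error must therefore be made explicit. Introducing $\phi_n(\theta) := u(x-\theta w^{n-1}(x),\, t^n-\theta)$ for $\theta\in[0,\Delta t]$, so that $\phi_n(0)=u^n$ and $\phi_n(\Delta t)=u^{n-1}\circ X_1^{n-1}$, Taylor's formula with integral remainder gives
\[
	R_{11}^n = (d_t w^n)\cdot\nabla u^n + \frac{1}{\Delta t}\int_0^{\Delta t}(\Delta t-\theta)\,\phi_n''(\theta)\,d\theta =: A^n + B^n,
\]
where $\phi_n''$ is a combination, polynomial of degree two in $w^{n-1}$, of the second-order space-time derivatives of $u$ along the segment. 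For $A^n - A^{n-1}$ I would use the product splitting $(d_t w^n - d_t w^{n-1})\cdot\nabla u^n + (d_t w^{n-1})\cdot d_t\nabla u^n$, bounding the first factor by \eqref{eq:dtsechalfest} (of size $O(\Delta t^2)$ in $L^\infty$, with $\|w\|_{W^{2,\infty}(L^\infty)}$ absorbed into $c$) and the second by \eqref{eq:dthalfest}; the surviving pointwise norms such as $\|\nabla u^n\|_0$ are converted to the local norm by the trace inequality $\Delta t^{1/2}\|u^n\|_1\le c\|u\|_{Z^3(t^{n-2},t^n)}$, producing $c\Delta t^{3/2}\|u\|_{Z^3(t^{n-2},t^n)}$.

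The main obstacle is the difference $B^n - B^{n-1}$ of the two remainder integrals. After aligning the integration variables, the integrand difference $\phi_n''(\theta)-\phi_{n-1}''(\theta)$ splits into three effects: the time shift of the evaluation point from $t^n-\theta$ to $t^{n-1}-\theta$, which is a $d_t$ of a second-order derivative of $u$ and hence contributes third-order space-time derivatives controlled by \eqref{eq:dthalfest} through $\|u\|_{Z^3(t^{n-2},t^n)}$; the coefficient change $w^{n-1}\to w^{n-2}$ multiplying the derivatives, controlled by $\|d_t w^{n-1}\|_{0,\infty}\le c\Delta t$; and the shift of the segment $x-\theta w^{n-1}\to x-\theta w^{n-2}$, controlled by Lemma \ref{lemm:twofunc} applied to the spatial derivatives of $u$. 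Combined with the weight factor $\frac{1}{\Delta t}\int_0^{\Delta t}(\Delta t-\theta)\,d\theta=\Delta t/2$ and the same local trace inequality for any leftover pointwise $u$-norms, each effect is $O(\Delta t^{3/2})$ or smaller in $\|u\|_{Z^3(t^{n-2},t^n)}$. The delicate point is the bookkeeping of which derivative order is spent at each step and confirming that the $Z^3$ regularity is exactly sufficient; this is precisely where the argument of Achdou and Guermond \cite{AchdouGuermond2000} for the exact ODE trajectory has to be adapted to the Euler map $X_1(\cdot)$ of \eqref{eq:x1def}.
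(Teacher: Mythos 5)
Your proposal is correct, and it splits naturally into a part that coincides with the paper and a part that takes a genuinely different route. For \eqref{eq:dtR12} your identity $\Delta t (Q^n - Q^{n-1}) = [\, d_t v^n - (d_t v^{n-1})\circ X_1^{n-1}\,] - [\, v^{n-2}\circ X_1^{n-1} - v^{n-2}\circ X_1^{n-2}\,]$ is exactly the paper's decomposition $R_{41}^n - R_{42}^n$, and your treatment of the second bracket by Lemma \ref{lemm:twofunc} with $\|w^{n-1}-w^{n-2}\|_{0,\infty}\le c\Delta t$ is the paper's verbatim (minor slack: this needs only $W^{1,\infty}(L^\infty)$, not $W^{2,\infty}$); for the first bracket the paper writes it directly as $\frac{1}{\Delta t}\int_{t^{n-1}}^{t^n}\int_{t-\Delta t}^{t} D_w^{n-1} v_t\, d\tau\, dt$ along the Euler segments, with $D_w^{n-1}=\partial_t + (w^{n-1}\cdot\nabla)$, whereas you insert $d_t v^{n-1}$ and combine \eqref{eq:dtsechalfest} with Lemma \ref{lemm:twofunc} (taking $w_2=0$) and \eqref{eq:dthalfest} --- an equivalent variant yielding the same norm $\|v\|_{H^2(L^2)\cap H^1(H^1)}$. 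For \eqref{eq:dtR11} the difference is real: the paper expands one Taylor order further, $g(1)=g(0)+g'(0)+\frac12 g''(0)+\int_0^1\frac{(1-s)^2}{2}g'''(s)\,ds$, so that the second-order term $\frac{\Delta t}{2}(D_w^{n-j})^2 u^{n-j+1}$ appears explicitly; the only cancellation then needed is between these two explicit terms, handled as a single $d_t$ of $t\mapsto[\partial_t+(w(\cdot,t-\Delta t)\cdot\nabla)]^2 u(t)$ via \eqref{eq:dthalfest} (this automatically absorbs your ``coefficient change'' $w^{n-1}\to w^{n-2}$ through the $w_t$ term), while the composed third-order remainders $\Delta t^2\int_0^1\frac{(1-s)^2}{2}(D_w^{n-j})^3 u\,ds$ are bounded one by one via the Jacobian bound of Lemma \ref{lemm:bijectiveMix}, so no analogue of your segment-shift estimate ever arises. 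You instead stop at the first-order remainder $B^n$, which individually is only $O(\Delta t^{1/2})$, and correctly recognize that $B^n-B^{n-1}$ must be differenced inside the integral; your three effects do close at the claimed orders: the time shift gives $c\Delta t^{3/2}\|u\|_{H^1(t^{n-2},t^n;H^2)}$, the coefficient change $c\Delta t^{3/2}\|u\|_{Z^2}$, and the segment shift (Lemma \ref{lemm:twofunc} applied to second derivatives) spends $\|\nabla D^2 u\|_0$, i.e., $L^2(H^3)\subset Z^3$, and comes out $O(\Delta t^{5/2})$ after integrating the weight, so the $Z^3$ regularity is indeed exactly sufficient. Your discrete product rule for $A^n - A^{n-1}$ matches the paper's estimate of $R_{115}^n - R_{116}^n$, which instead writes the difference as $\int_{t^{n-1}}^{t^n}\partial_t\{[(w(t)-w(t-\Delta t))\cdot\nabla]u(t)\}\,dt$ and bounds it by $c(\|w\|_{W^{2,\infty}(L^\infty)})\Delta t^{3/2}\|u\|_{H^1(t^{n-1},t^n;H^1)}$. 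In short, the paper's extra expansion order buys simple bookkeeping at the cost of handling composed third-order material derivatives, while your lower-order expansion keeps the remainders tamer but requires the three-way splitting you describe; both arguments are sound and spend the same regularity of $u$ and $w$.
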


\begin{lemma}\label{lemm:dtest1}
Assume Hypotheses \ref{hypo:pestiReg}, 
\ref{hypo:dt}, \ref{hypo:stab1}, and \ref{hypo:initP}.
Then the following estimate holds for $n=1,2,...,N_T$:
\begin{equation}
	\Bigl\| \frac{d_t e_h^n}{\Delta t}  \Bigr\|_0 
	\leq c_{\nu,\dzero} ( \Delta t + h^k ).
	\label{eq:dtest1}
\end{equation}
\end{lemma}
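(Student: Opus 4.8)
The plan is to mimic the proof of Theorem \ref{thm:veloSmallVis}, but applied to the time-differenced error equations \eqref{eq:erroreq2} rather than \eqref{eq:erroreq1}, and to run the discrete Gronwall argument on the quantity $x^n := \|d_t e_h^n\|_0^2 + \Delta t^2 \|\nabla d_t \eps_h^n\|_0^2$, aiming at the bound $x^n \leq c_{\nu,\dzero}\Delta t^2 (\Delta t + h^k)^2$, which is equivalent to \eqref{eq:dtest1}. First I would apply Lemma \ref{lemm:errorEqArrange} to \eqref{eq:erroreq2} with the identifications $\til U_h^{n+1} = d_t \til e_h^{n+1}$, $U_h^{n+1} = d_t e_h^{n+1}$, $G^n = (i_h^T d_t e_h^n)\circ X_1^n$, $\Psi_h^n = d_t \psi_h^n$, $P_h^{n+1} = d_t \eps_h^{n+1}$, $F^{n+1} = R_2^{n+1}$ and $S^{n+1}=0$, to obtain for $n=1,\dots,N_T-1$ the energy identity
\begin{equation*}
\begin{split}
&\frac{1}{2\Delta t}\left(\|d_t e_h^{n+1}\|_0^2 - \|(i_h^T d_t e_h^n)\circ X_1^n\|_0^2 + \|d_t\til e_h^{n+1} - (i_h^T d_t e_h^n)\circ X_1^n\|_0^2\right)\\
&+ \nu\|\nabla d_t\til e_h^{n+1}\|_0^2 + \frac{\Delta t}{2}\left(\|\nabla d_t\eps_h^{n+1}\|_0^2 - \|\nabla d_t\psi_h^n\|_0^2\right) + \dzero\snorm{d_t\eps_h^{n+1}}^2 = \langle R_2^{n+1}, d_t\til e_h^{n+1}\rangle.
\end{split}
\end{equation*}
Exactly as in \eqref{eq:ehxEst} I would use Lemma \ref{lemm:bijectiveMix} and the $L^2$-stability of $i_h^T$ to replace $\|(i_h^T d_t e_h^n)\circ X_1^n\|_0^2$ by $(1+c\Delta t)\|d_t e_h^n\|_0^2$, and as in \eqref{eq:etilEst} to absorb $\|d_t\til e_h^{n+1}\|_0^2$ into the left-hand side. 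The term $\|\nabla d_t\psi_h^n\|_0^2$ is treated through $d_t\psi_h^n = d_t\eps_h^n - (r_h^{n+1}-2r_h^n+r_h^{n-1})$: Young's inequality yields $(1+\Delta t)\|\nabla d_t\eps_h^n\|_0^2$, which telescopes into $x^n$, plus a second-difference term that is $O(\Delta t^2)$ by \eqref{eq:dtsechalfest} together with the regularity $p\in H^2(H^k)$ of Hypothesis \ref{hypo:pestiReg} and the stability of the Stokes projection (Lemma \ref{lemm:Stokesproj}).

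Next I would estimate $\langle R_2^{n+1}, d_t\til e_h^{n+1}\rangle$ with $R_2^{n+1}$ from \eqref{eq:R2def}. The two consistency contributions $(R_{11}^{n+1}-R_{11}^n, d_t\til e_h^{n+1})$ and $(R_{12}^{n+1}-R_{12}^n, d_t\til e_h^{n+1})$, the latter with $v=\eta$, are handled by Schwarz's inequality together with Lemma \ref{lemm:dtR11R12} and Lemma \ref{lemm:Stokesproj}, each $L^2$-factor being absorbed by $\frac{\gamma_0}{4}\|d_t\til e_h^{n+1}\|_0^2$ as in \eqref{eq:R3Schwarz}; after the final sum $\Delta t\sum_n(\cdot)$ these produce terms of order $\Delta t^4 + \Delta t^2 h^{2k}$. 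The genuinely delicate term is the trajectory-difference term $\frac{1}{\Delta t}\left((i_h^T e_h^{n-1})\circ X_1^n - (i_h^T e_h^{n-1})\circ X_1^{n-1}, d_t\til e_h^{n+1}\right)$: the composed function cannot be differentiated cheaply, so instead of an $L^2$-bound I would use $H^{-1}$--$H^1$ duality and Lemma \ref{lemm:DRlike} with $w_1=w^n$, $w_2=w^{n-1}$, $v=i_h^T e_h^{n-1}$, giving the bound $c\|e_h^{n-1}\|_0\|d_t w^n\|_{1,\infty}\|\nabla d_t\til e_h^{n+1}\|_0$. This $\|\nabla d_t\til e_h^{n+1}\|_0$ factor forces me to absorb it into the viscous term $\nu\|\nabla d_t\til e_h^{n+1}\|_0^2$, producing the factor $\nu^{-1}$ (hence a constant $c_{\nu,\dzero}$) and leaving $\frac{c}{\nu}\|e_h^{n-1}\|_0^2\|d_t w^n\|_{1,\infty}^2$ in the source. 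Using $\|d_t w^n\|_{1,\infty}^2 \leq \Delta t\|\partial_t w\|_{L^2(t^{n-1},t^n;W^{1,\infty})}^2$ from \eqref{eq:dthalfest} and the velocity bound \eqref{eq:ehproj}, the accumulated contribution $\Delta t\sum_n \frac{c}{\nu}\|e_h^{n-1}\|_0^2\|d_t w^n\|_{1,\infty}^2$ is of order $c_{\nu,\dzero}\Delta t^2(\Delta t+h^k)^2$, exactly the target order.

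Collecting these bounds I would arrive at an inequality of the form $\frac{x^{n+1}-x^n}{\Delta t} + y^{n+1} \leq c x^n + c b^{n+1}$ for $n\geq 1$, with $\Delta t\sum_i b^i \leq c_{\nu,\dzero}\Delta t^2(\Delta t+h^k)^2$, and then apply Lemma \ref{lem:discreteGronwall} with $n_0=1$. This requires a separate bound on the initial term $x^1$. Since Hypothesis \ref{hypo:initP} gives $e_h^0 = 0$ and $\eps_h^0 = 0$, the first-step estimate follows by applying Lemma \ref{lemm:errorEqArrange} to \eqref{eq:erroreqSPro} at $n=0$ and exploiting that $\|u\|_{Z^2(0,t^1)}$, the corresponding norms of $\eta$, and $\|\nabla\psi_h^0\|_0$ are small on the short interval $(0,\Delta t)$ (by absolute continuity and the extra regularity of Hypothesis \ref{hypo:pestiReg}), which yields $x^1 \leq c_{\nu,\dzero}\Delta t^2(\Delta t+h^k)^2$. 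The Gronwall inequality then gives $x^n \leq c_{\nu,\dzero}\Delta t^2(\Delta t+h^k)^2$, that is \eqref{eq:dtest1}. I expect the trajectory-difference term to be the main obstacle: it is precisely the place where the gradient of the velocity error must be controlled through the viscous dissipation, which is both why Lemma \ref{lemm:DRlike} is needed and why the resulting constant is only viscosity-dependent rather than viscosity-robust.
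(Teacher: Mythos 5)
Your proposal follows essentially the same route as the paper's proof: the energy identity from Lemma \ref{lemm:errorEqArrange} applied to \eqref{eq:erroreq2}, the trajectory-difference term bounded via Lemma \ref{lemm:DRlike} with $v=i_h^T e_h^{n-1}$ and absorbed into the viscous dissipation (the source of the $\nu^{-1}$-dependence), the consistency terms via Lemma \ref{lemm:dtR11R12} with $v=\eta$, the second-difference treatment of $d_t\psi_h^n$ via \eqref{eq:dtsechalfest}, Gronwall with $n_0=1$ on the same $x^n$, and the initial step $x^1$ handled from $e_h^0=\eps_h^0=0$ by applying Lemma \ref{lemm:errorEqArrange} to \eqref{eq:erroreqSPro} at $n=0$ with local-in-time smallness of the residuals. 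The argument is correct and matches the paper in all essential points.
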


\begin{proof}
We apply Lemma \ref{lemm:errorEqArrange} to \eqref{eq:erroreq2} and obtain
\begin{equation}
\begin{split}
	& \frac{1}{2\Delta t}( \| d_t e_h^{n+1} \|_0^2 - \| (i_h^T d_t e_h^{n}) \circ X_1^n \|_0^2 + \| d_t \til e_h^{n+1} - (i_h^T d_t e_h^{n}) \circ X_1^n \|_0^2) + \nu \| \nabla d_t \til e_h^{n+1} \|_0^2  \\
	& 
	+ \frac{\Delta t}{2} (\| d_t \nabla \eps_h^{n+1} \|_0^2 -  \| d_t \nabla \psi_h^{n} \|_0^2) 
	+ \dzero \snorm{d_t \eps_h^{n+1}}^2 
	 = \bigl \langle R_2^{n+1}, d_t \til e_h^{n+1} \bigr \rangle
\end{split}
\label{eq:dterrorAfterStab}
\end{equation}
for $n=1,...,N_T-1$.

The first term in
$\langle R_2^{n+1}, d_t \til e_h^{n+1} \rangle$
is bounded
by Lemma \ref{lemm:DRlike} with $v = i_h^T e_h^{n-1}$, 
$w_1 = w^n$ and $w_2 = w^{n-1}$,
and \eqref{eq:dthalfest}:
\begin{equation*}
\begin{split}
	& \frac{1}{\Delta t} \left( (i_h^T e_h^{n-1}) \circ X_1^{n} - (i_h^T e_h^{n-1}) \circ X_1^{n-1}, d_t \til e_h^{n+1} \right) \\
	\leq & c \| i_h^T e_h^{n-1} \|_0 \| w^n - w^{n-1} \|_{1,\infty} \| d_t \til e_h^{n+1} \|_1 \\ 
	\leq &  \frac{c \Delta t}{\nu} \|e_h^{n-1}\|_0^2 \biggl\| \frac{\partial w}{\partial t} \biggl\|_{L^2(t^{n-1}, t^n; W^{1,\infty})}^2
	+ \nu \| \nabla d_t \til e_h^{n+1} \|_0^2.
	\label{eq:inverseEst1}
\end{split}
\end{equation*}
Other terms in $\langle R_2^{n+1}, d_t \til e_h^{n+1} \rangle$ can be estimated, 
as in \eqref{eq:R3Schwarz},
by
\begin{equation*}
	|(R^{n+1}_{11} - R^{n}_{11} + R^{n+1}_{12} - R^{n}_{12}, d_t \til e_h^{n+1})| 
	\leq 
	\frac{1}{\gamma_0} \|R^{n+1}_{11} - R^{n}_{11} \|_0^2
	+ \frac{1}{\gamma_0} \| R^{n+1}_{12} - R^{n}_{12} \|_0^2 
	+ \frac{\gamma_0}{2} \| d_t \til e_h^{n+1}\|_0^2
\end{equation*}
and by Lemma \ref{lemm:dtR11R12}
with $v=\eta$.
Here $\gamma_0$ is chosen so that $\frac{1}{2\Delta t} \geq \frac{1}{2\Delta t_0} \geq \gamma_0$.
As in \eqref{eq:etilEst} and \eqref{eq:ehxEst}, we also use the inequalities
\begin{equation*}
	\frac{\gamma_0}{2} \| d_t \til e_h^{n+1} \|_0^2 \leq  
	\gamma_0 \| d_t \til e_h^{n+1} - (i_h^T d_t e_h^n) \circ X_1^n \|_0^2
	+ \gamma_0 \| (i_h^T d_t e_h^n) \circ X_1^n \|_0^2,
\end{equation*}
\begin{equation*}
	\| (i_h^T d_t e_h^n) \circ X_1^n \|_0^2 
	\leq (1+c\Delta t) \| i_h^T d_t e_h^n \|_0^2
	\leq (1+c\Delta t) \| d_t e_h^n \|_0^2.
\end{equation*}
The estimate for $\| d_t \nabla \psi_h^n \|_0^2$ is obtained by \eqref{eq:dtsechalfest} as follows:
\begin{align*}
	\| d_t \nabla \psi_h^n \|_0^2
	& \leq (1+\Delta t) \| d_t \nabla \eps_h^n \|_0^2 + \Bigl( 1+\frac{1}{\Delta t} \Bigr) \|\nabla d_t r_h^{n+1} - \nabla d_t r_h^n\|_0^2 \\
	& \leq (1+\Delta t) \| d_t \nabla \eps_h^n \|_0^2 
	+ c \Bigl( 1+\frac{1}{\Delta t} \Bigr) \Delta t^3 \|r_h\|_{H^2(t^{n-1},t^{n+1}; H^1)}^2.
\end{align*}

Gathering these estimates, 
from \eqref{eq:dterrorAfterStab},
we now obtain
for $n=1,2,...,N_T-1$, 
\[
	\frac{x^{n+1} - x^n}{\Delta t} 
	\leq c x^n + c b^{n+1},
\]
where 
\begin{align*}
	x^n = & \| d_t e_h^n \|_0^2 + \Delta t^2 \| d_t \nabla \eps_h^n \|_0^2, \\
	b^n = & \frac{\Delta t}{\nu} \|e_h\|_{\ell^\infty(L^2)}^2 \| w \|_{H^1(t^{n-2}, t^{n-1}; W^{1,\infty})}^2
	+ \Delta t^{3} \Bigl( \| u \|_{Z^3(t^{n-2}, t^n)}^2 + \|r_h\|_{H^2(t^{n-2},t^{n}; H^1)}^2 \Bigr) \\
		& + \Delta t \| \eta \|_{H^2(t^{n-2}, t^n; L^2) \cap H^1(t^{n-2}, t^n; H^{1})}^2 
		+ \Delta t^2 \| \eta^{n-2} \|_1.
\end{align*}
Using Gronwall's inequality (Lemma \ref{lem:discreteGronwall}) with $n_0=1$,
Lemma \ref{lemm:Stokesproj} for $(\eta, r_h)$ and \eqref{eq:ehproj} for $e_h$,
we get
for $n=1,..., N_T$
\begin{equation}
	x^n \leq c_{\nu,\dzero} \bigl[ x^1 + \Delta t^2 (\Delta t^2 + h^{2k}) \bigr].
	\label{eq:dtestiafterGron}
\end{equation}

Finally we estimate $x^1$.
Since $e_h^0=0$, and $\eps_h^0 = 0$ from Hypothesis \ref{hypo:initP}, 
\begin{equation}
	x^1 = \| e_h^1 \|_0^2 + \Delta t^2 \| \nabla \eps_h^1 \|_0^2.
	\label{eq:dtinitx1expres}
\end{equation}
Again, $e_h^0=0$,
\eqref{eq:erroreqSPro} with $n=0$ and Lemma \ref{lemm:errorEqArrange} yields
\begin{equation}
	\frac{1}{2\Delta t} (\| e_h^1 \|_0^2 + \| \til e_h^1 \|_0^2 ) + \nu \| \nabla \til e_h^1 \|_0^2 
	+ \dzero \snorm{\eps_h^1}^2
	+ \frac{\Delta t}{2} ( \| \nabla \eps_h^1 \|_0^2 - \| \nabla \psi_h^0 \|_0^2 )
	= ( R_{11}^1 + R_{12}^1, \til e_h^1 ).
\label{eq:step1Afterstab}
\end{equation}
By $\eps_h^0 = 0$ and \eqref{eq:dthalfest},
\begin{align*}
	\| \nabla \psi_h^0 \|_0^2
	& \leq 2 \|\nabla \eps_h^0 \|_0^2 + 2 \|\nabla(r_h^{1} - r_h^0)\|_0^2 
	\leq 2 \Delta t \| p \|_{H^1(t^0, t^{1}; H^1)}^2
	\leq c \Delta t^2 \| p \|_{C^1(H^1)}^2.
\end{align*}
For the right hand side
\begin{equation*}
	|( R_{11}^1 + R_{12}^1, \til e_h^1 )|
	\leq \Delta t (\| R_{11}^1 \|_0^2 + \| R_{12}^1 \|_0^2 ) + \frac{1}{2 \Delta t} \| \til e_h^1 \|_0^2.
\end{equation*}
The estimate of the first and second term are obtained by Lemma \ref{lemm:estR1} with $v=\eta$,
and the last term is absorbed by the left hand side of \eqref{eq:step1Afterstab}.
We then have
\begin{equation}
	\| e_h^1 \|_0^2 + \Delta t^2 \| \nabla \eps_h^1 \|_0^2 
	\leq c_{\nu,\dzero} \Delta t^2 (\Delta t^2 + h^{2k}).
	\label{eq:dtInitEsti}
\end{equation}

Now, the conclusion \eqref{eq:dtest1} follows from \eqref{eq:dtestiafterGron}, \eqref{eq:dtinitx1expres} and \eqref{eq:dtInitEsti}.
\end{proof}

\begin{proof}[Proof of Theorem \ref{thm:pres}]
The inequality \eqref{eq:DR} with $v=i_h^T e_h^n$ and $w=w^{n}$ yields
\[
	\biggl\| \frac{i_h^T e_h^n - (i_h^T e_h^n) \circ X_1^n}{\Delta t} \biggr\|_{-1}
	\leq c \| i_h^T e_h^n \|_0 \|w^n\|_{1,\infty} 
	\leq c \| e_h^n \|_0.
\]
From 
\eqref{eq:erroreqEps}, the inequality $\|\cdot \|_{-1} \leq \|\cdot\|_0$,
Lemma \ref{lemm:dtest1},
and Lemma \ref{lemm:estR1} with $v=\eta$
\begin{align*}
	\sup_{v_h \in V_h \setminus \{0\}} \frac{(\nabla \eps_h^{n+1}, v_h)}{\|v_h\|_1}
	\leq & \biggl\| \frac{e_h^{n+1} - e_h^n}{\Delta t} \biggr\|_{-1}
	+ \biggl\| \frac{i_h^T e_h^n - (i_h^T e_h^n) \circ X_1^n}{\Delta t} \biggr\|_{-1} \\
	&+ \nu \| \nabla \til e_h^{n+1} \|_0
	+ \|R_{11}^{n+1}\|_{-1} + \|R_{12}^{n+1}\|_{-1} \\
	\leq & c_{\nu,\dzero} (\Delta t + h^k) 
	+ c \| e_h^n \|_0
	+ \nu \| \nabla \til e_h^{n+1} \|_0
	+ \| \eta \|_{H^1(t^{n}, t^{n+1}; L^2) \cap L^2(t^{n}, t^{n+1}; H^{1})}.
\end{align*}
The estimate for $\eta$ is obtained by Lemma \ref{lemm:Stokesproj}.
Then, from \eqref{eq:epsInfsup} with the estimate above,
we have
\begin{align*}
	\| \eps_h \|_{\ell^2(L^2)}
	\leq c_{\nu,\dzero} (\Delta t + h^{k}) 
		+ c \|e_h \|_{\ell^\infty(L^2)}
		+ c \nu \| \nabla \til e_h \|_{\ell^2(L^2)}
		+ c |\eps_h|_{\ell^2(s)}.
\end{align*}
The estimates for $\|e_h \|_{\ell^\infty(L^2)}$, $\| \nabla \til e_h \|_{\ell^2(L^2)}$ and $|\eps_h|_{\ell^2(s)}$ are obtained by \eqref{eq:ehproj}.
Now the conclusion \eqref{eq:prestheoconcl} follows from the triangle inequality applied to 
$p_h-p = \eps_h - (p-\pro p)$ and Lemma \ref{lemm:Stokesproj}.
\end{proof}

\section{Numerical results} 
\label{sec:numericalResults}
We compare the result of Scheme(2,1,0) (Taylor--Hood element) to Scheme($k$,$k$,$\dzero$) with $k=1,2$ and $\dzero>0$.
We use the stabilization term $\szero$ in \eqref{eq:stabBPB}.
We implement the practical scheme \eqref{eq:schemePracProj}--\eqref{eq:schemePracb}.
We integrate the term 
\eqref{eq:intLLV} exactly 
instead of the original one $((i_h^T u_h^n) \circ X_1(w^n), v_h)$ in \eqref{eq:schemePraca}.

Let $\Omega = (0,1)^2$, $T=1$.
The functions $f$ and $u^0$ are defined so that the exact solution is
\begin{align*}
	u_1(x,t) &= (1 + \sin(\pi t)) \sin(\pi x_1)^2 \sin(2 \pi x_2), \\
	u_2(x,t) &= -(1 + \sin(\pi t)) \sin(2\pi x_1) \sin(\pi x_2)^2,\\
	p(x,t) & = - \cos(\pi x_2) + \frac{1}{2} \cos(4 \pi(t + x_1)) .
\end{align*}
The velocity $w$ is also set to be $u$.

FreeFEM \cite{FreeFemCite} is used only for triangulations of the domain.
Let $N=16, 23, 32 ,45$ and $64$ be the division number of each side of $\overline \Omega$, and
we set $h = 1/N$. 
Figure \ref{fig:irsq_renum} shows the triangulation of $\overline \Omega$ when $N=16$. 
The time increment $\Delta t$ is set to be $\Delta t=h^2$ for Scheme(2,1,0) and Scheme(2,2,$\dzero$), 
and $\Delta t=(1/16)h$ for Scheme(1,1,$\dzero$)
to observe the convergence behavior.
This choice is not based on the stability condition.

\begin{figure}
	\centering
	\includegraphics[width=1.5in]{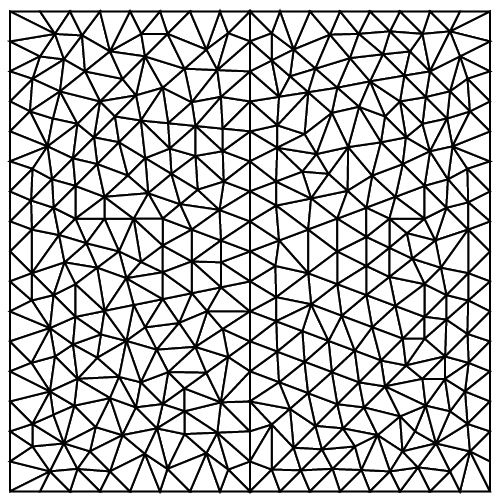}
	\caption{The triangulation of $\overline \Omega$ for $N=16$.
	}
	\label{fig:irsq_renum}
\end{figure}

The initial value $(u_h^0, p_h^0)$ is set to be the Lagrange interpolation of $(u^0, p^0)$ in $\mathrm P_k/\mathrm P_\ell$-element space
for Scheme$(k,\ell,\dzero)$.

\begin{remark}
Lagrange interpolation is sufficient for Hypothesis \ref{hypo:initSmallVis} in the velocity estimate (Section \ref{sec:errorEstiSmallVis}) but not for Hypothesis \ref{hypo:initP} in the pressure estimate (Section \ref{sec:errorEstiP}).
For the effect to the pressure solution at the first step, see \cite{BurmanFernandez2008,JohnNovo2015}.
\end{remark}

Recall the norm notation in \eqref{eq:discNorms}.
The relative error $E_X$ is defined by
\begin{equation*}
	E_X(u) = \frac{\| u-\til u_h \|_{X,h}}{\| u\|_{X,h}}, \quad
	E_X(p) = \frac{\| p-p_h \|_{X,h}}{\| p\|_{X,h}},
\end{equation*}
where $X=\ell^\infty(L^2)$ or $\ell^2(H_0^1)$ for $u$, 
$X=\ell^2(L^2)$ for $p$,
and $\| \cdot \|_{X,h}$ means that the spatial norm is computed approximately by numerical quadrature of order nine \cite{9thArt}.
Table \ref{table:symbol} shows the symbols used in graphs. 
Since every graph of the relative error $E_X$ versus $h$ is depicted in the logarithmic scale, the slope corresponds to the convergence order. 

\begin{table}
	\caption{Symbols used in the graphs.}
	\centering
	\begin{tabular}{cccc}
		\hline
		$\phi$ 				& $u$ 	& $u$ 	& $p$ \\
		$X$ 				& $\ell^\infty(L^2)$& $\ell^2(H^1_0)$ & $\ell^2(L^2)$ \\
		\hline
		\hline
		Scheme($2,1,0$)	 	& \mone & \mtwo & \mthree \\
		Scheme($k,k,\dzero$)	& \mfour& \mfive& \msix \\
		\hline
	\end{tabular}
	\label{table:symbol}
\end{table}

\begin{figure}
\centering
\includegraphics[height=3in]{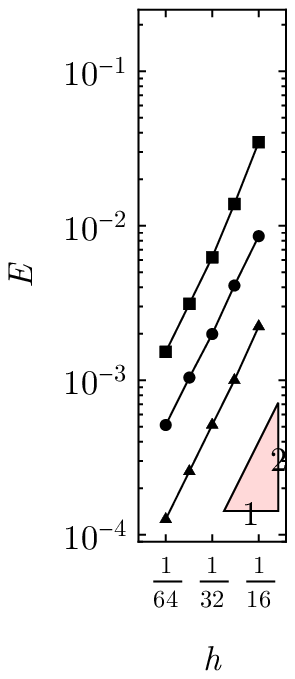} \qquad
\includegraphics[height=3in]{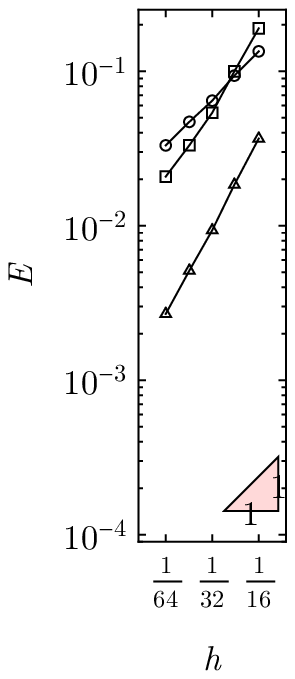} \qquad
\includegraphics[height=3in]{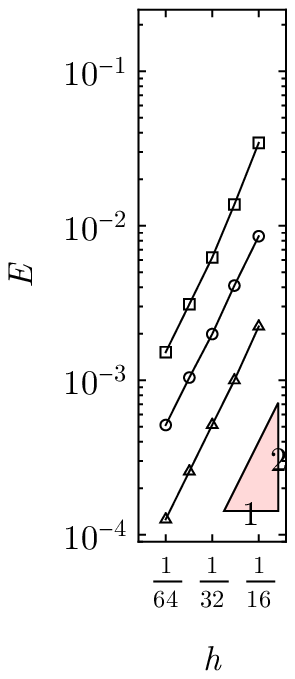}
\caption{Relative errors versus $h$ for $\nu=1$. 
Scheme(2,1,0) with $\Delta t=h^2$ (left), Scheme(1,1,$10^{-1}$)  with $\Delta t=(1/16)h$ (center), Scheme(2,2,$10^{-2}$) with $\Delta t=h^2$ (right).
}
\label{fig:nu0HEgraph}
\end{figure}

\begin{figure}
\centering
\includegraphics[height=3in]{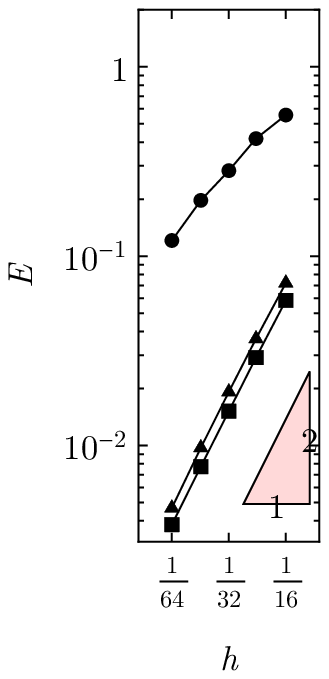} \qquad
\includegraphics[height=3in]{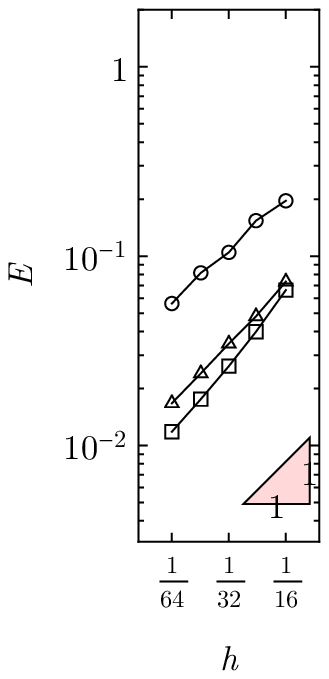} \qquad
\includegraphics[height=3in]{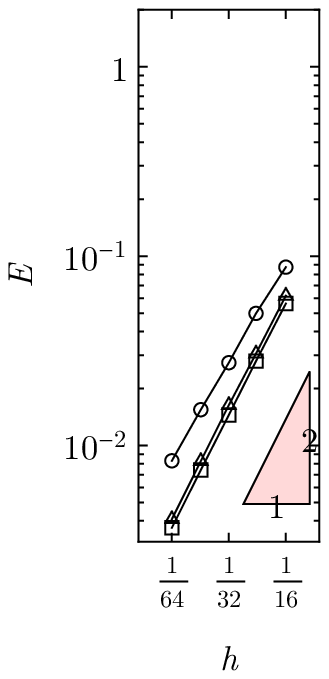}
\caption{Relative errors versus $h$ for $\nu=10^{-4}$. 
Scheme(2,1,0) with $\Delta t=h^2$ (left), Scheme(1,1,$10^{-1}$)  with $\Delta t=(1/16)h$ (center), Scheme(2,2,$10^{-2}$) with $\Delta t=h^2$ (right).
}
\label{fig:nu4HEgraph}
\end{figure}

Figure \ref{fig:nu0HEgraph} shows the graphs of the errors versus $h$ when $\nu=1$.
For Scheme(2,1,0) and Scheme(2,2,$10^{-2}$), all convergence orders are almost two with no significant differences.
For Scheme(1,1,$10^{-1}$), the convergence orders of $E_{\ell^\infty(L^2)}(u)$ (\mfour) and $E_{\ell^2(L^2)}(p)$ (\msix) are greater than one.
These exceed prediction from the theoretical result.
Figure \ref{fig:nu4HEgraph} shows the graphs when $\nu=10^{-4}$.
For Scheme(2,1,0) and Scheme(2,2,$10^{-2}$),
there are no significant difference in $E_{\ell^\infty(L^2)}(u)$ (\mone,\mfour) and $E_{\ell^2(L^2)}(p)$ (\mthree,\msix).
In Scheme(2,1,0), meanwhile, convergence order of $E_{\ell^2(H^1_0)}(u)$ (\mtwo) is about 0.8 to 1.4, which is less than 2.
In Scheme(2,2,$10^{-2}$), convergence order of $E_{\ell^2(H^1_0)}(u)$ (\mfive) is about 1.5 to 1.8.
To observe the convergence order $O(h^2)$, finer meshes will be necessary.
The error of Scheme(2,2,$10^{-2}$) (\mfive) is almost ten times less than that of Scheme(2,1,0) (\mtwo) for $h=1/64$.
We also observe that the errors $E_{\ell^2(H^1_0)}(u)$ of Scheme(1,1,$10^{-1}$) (\mfive) is less than that of Scheme(2,1,0) (\mtwo).

\begin{figure}
\centering
\includegraphics[width=1.5in]{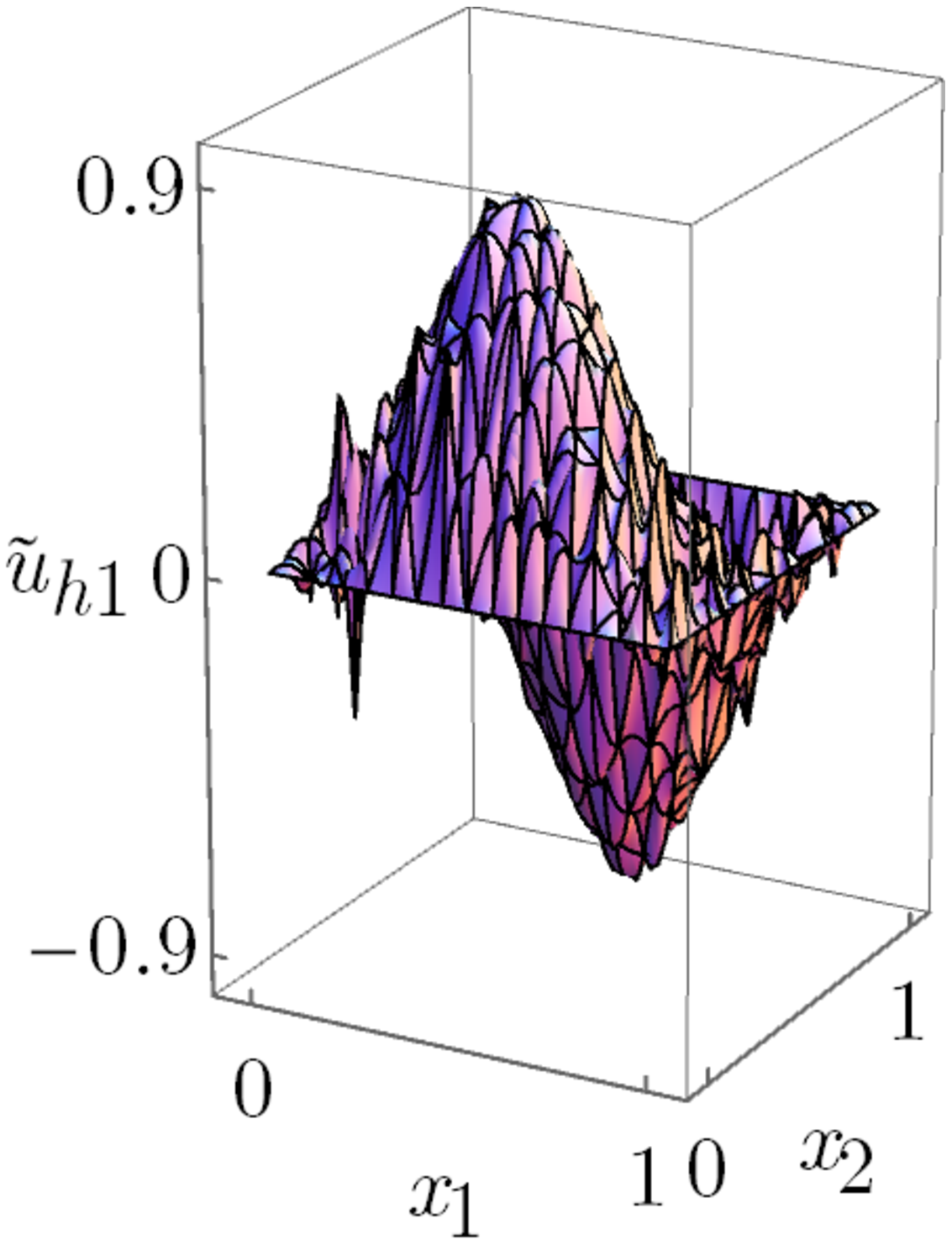}  \quad
\includegraphics[width=1.5in]{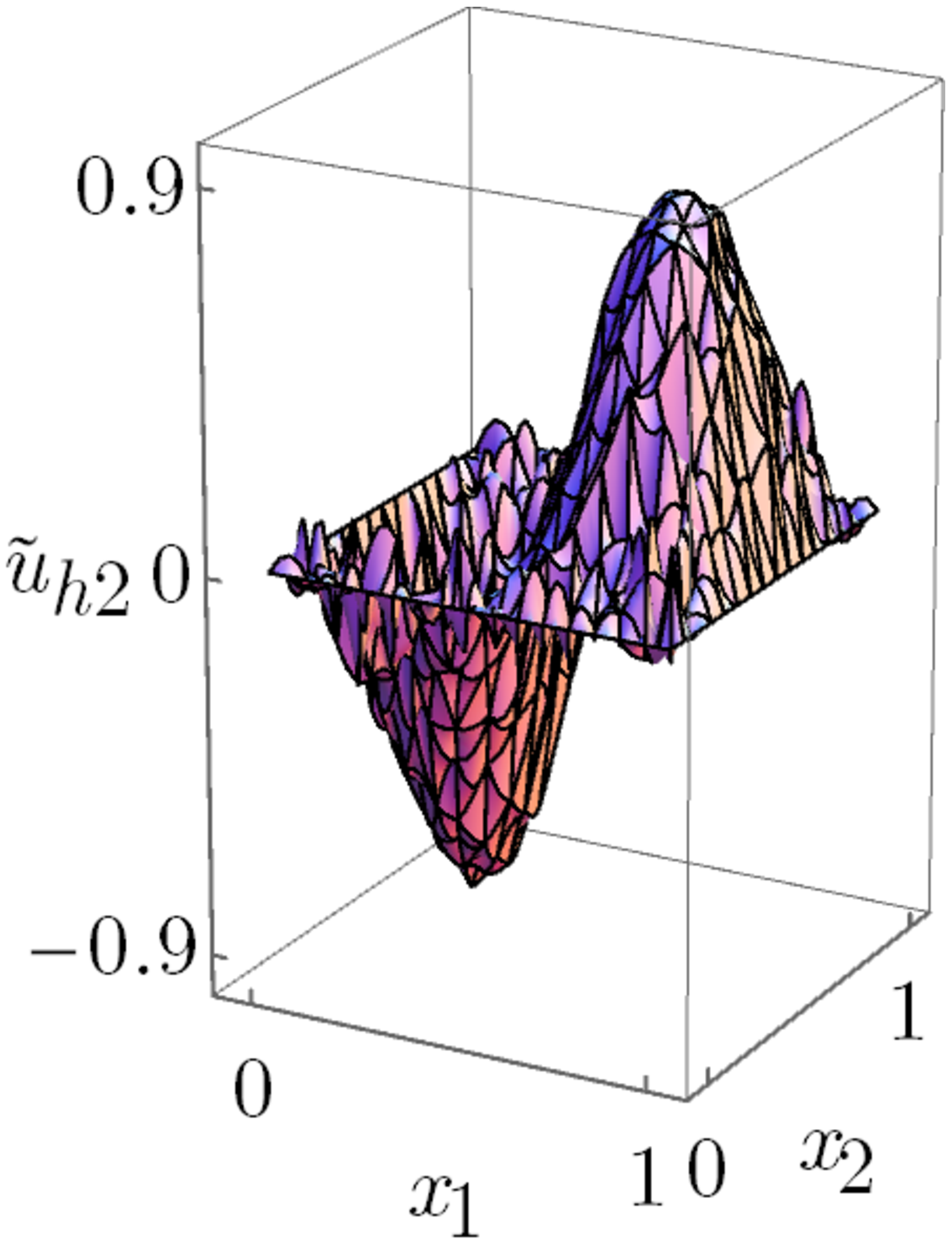} \quad
\includegraphics[width=1.5in]{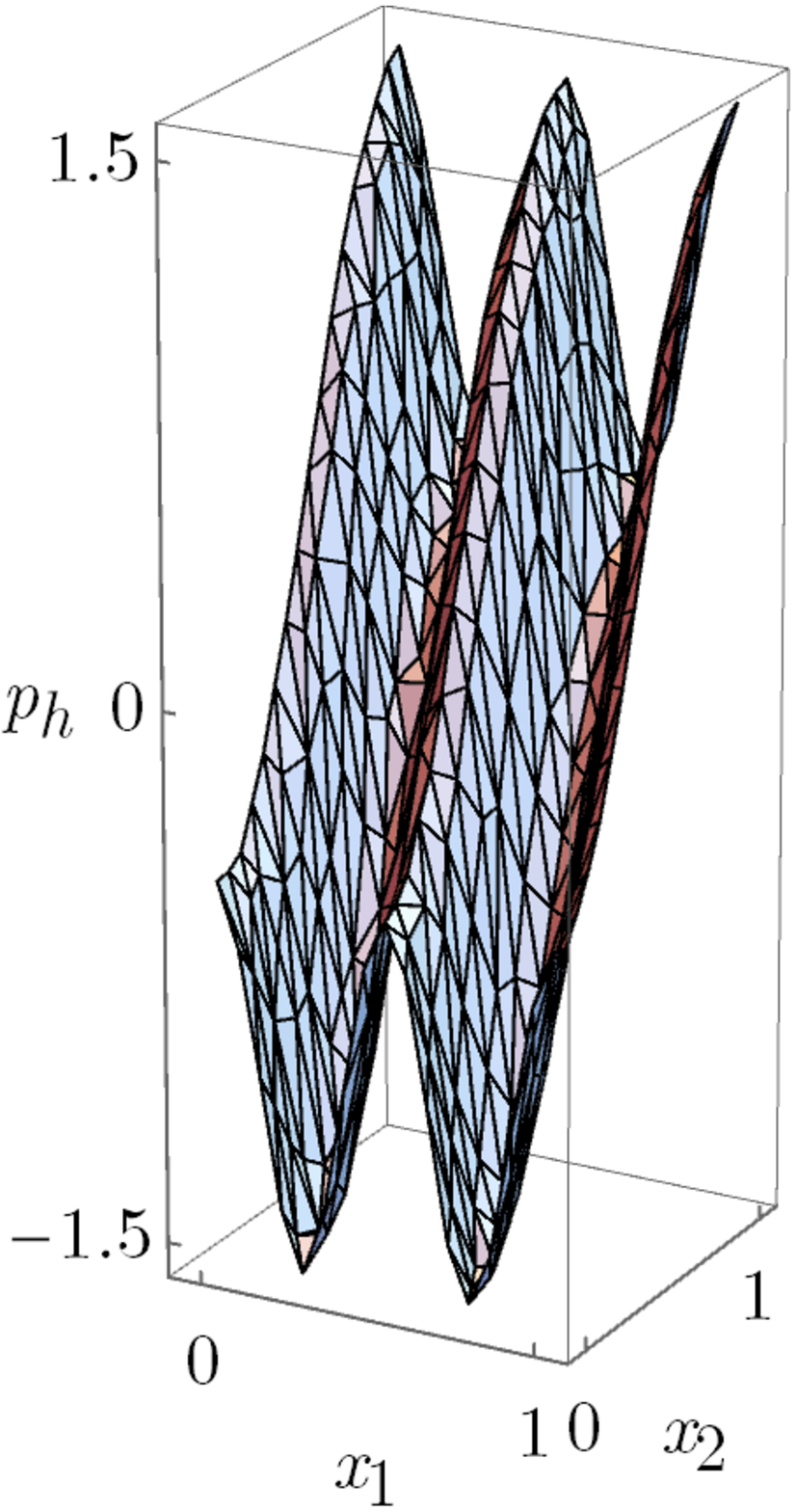}
\caption{Stereographs of $\til u_{h1}^n, \til u_{h2}^n$ and $p_{h}^n$ of Scheme(2,1,0), $\nu=10^{-4}$, $t^n=1$, $h=1/16$, $\Delta t=h^2$.}
\label{fig:stereo_p2p1}
\end{figure}

\begin{figure}
\centering
\includegraphics[width=1.5in]{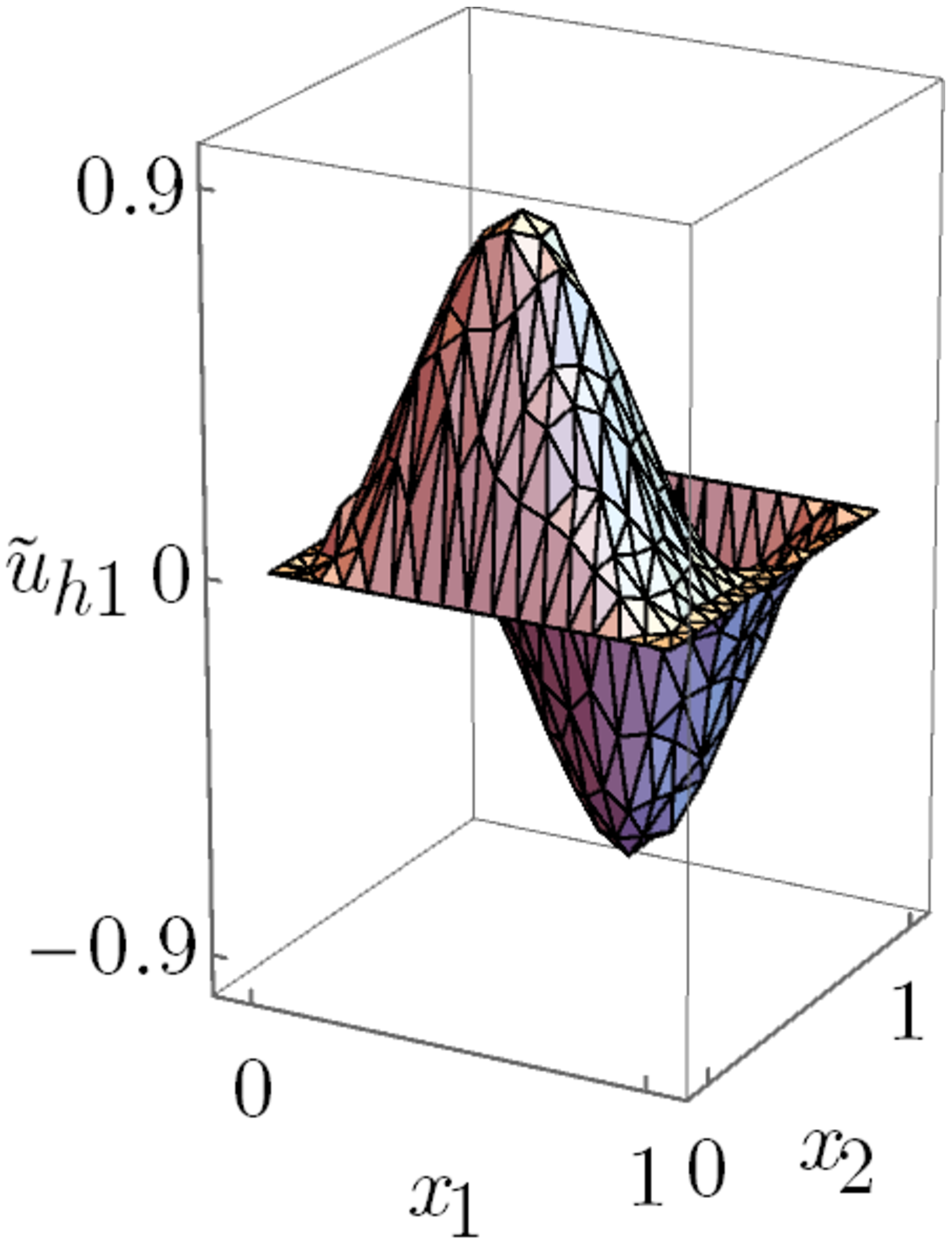}  \quad
\includegraphics[width=1.5in]{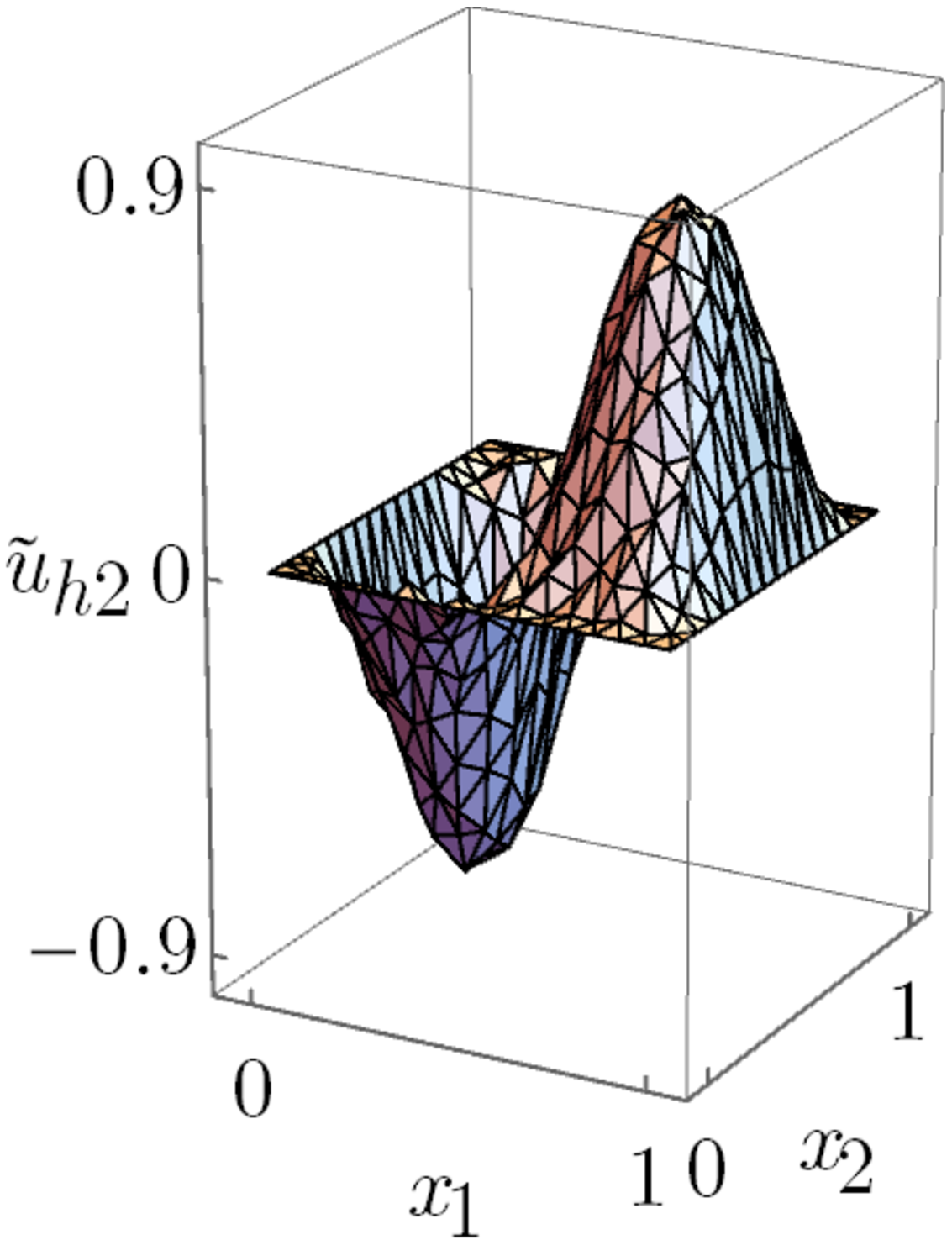} \quad
\includegraphics[width=1.5in]{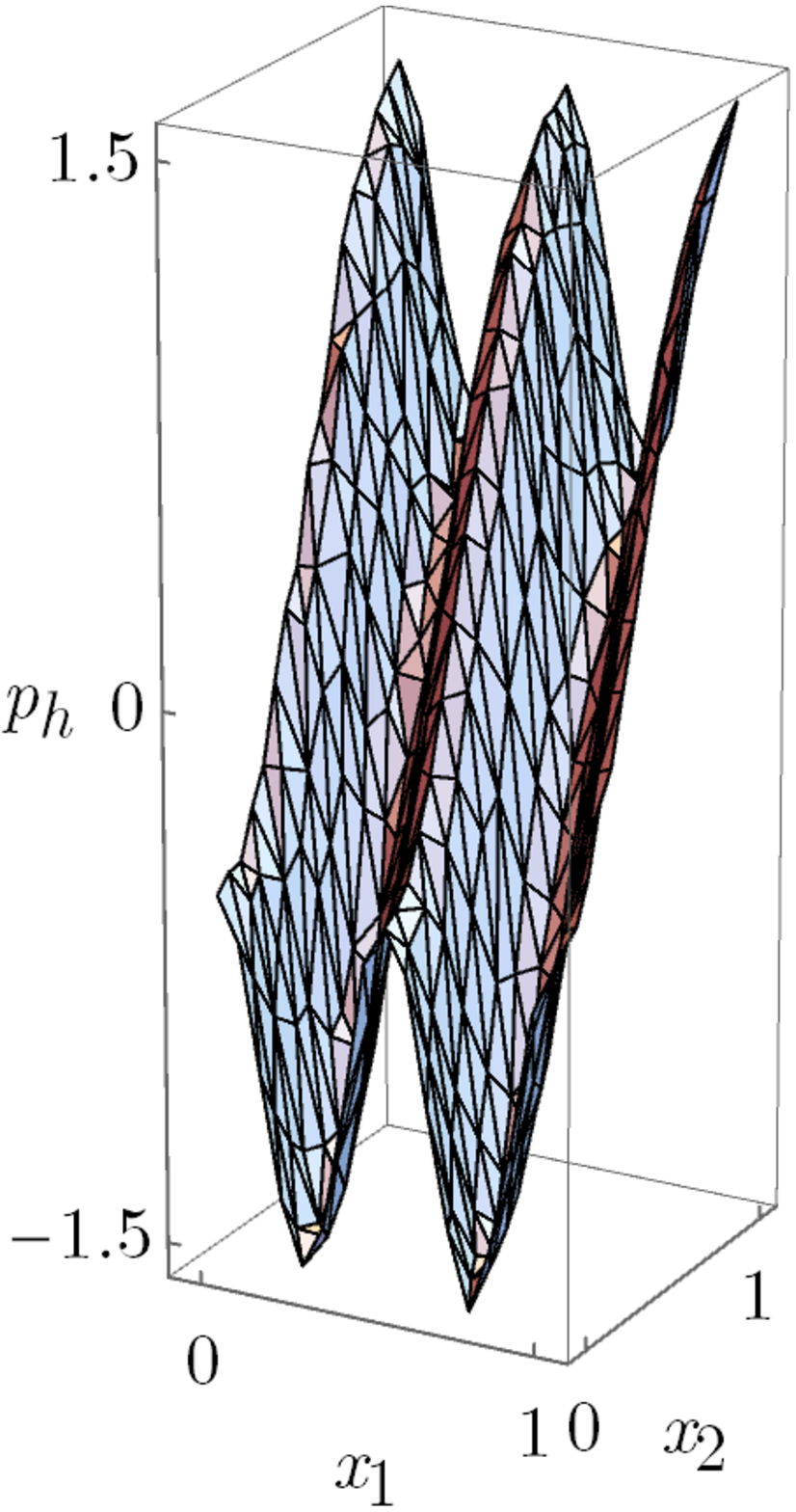}
\caption{Stereographs of $\til u_{h1}^n, \til u_{h2}^n$ and $p_{h}^n$ of Scheme(1,1,$10^{-1}$), $\nu=10^{-4}$, $t^n=1$, $h=1/16$, $\Delta t=(1/16)h$.}
\label{fig:stereo_p1p1}
\end{figure}

\begin{figure}
\centering
\includegraphics[width=1.5in]{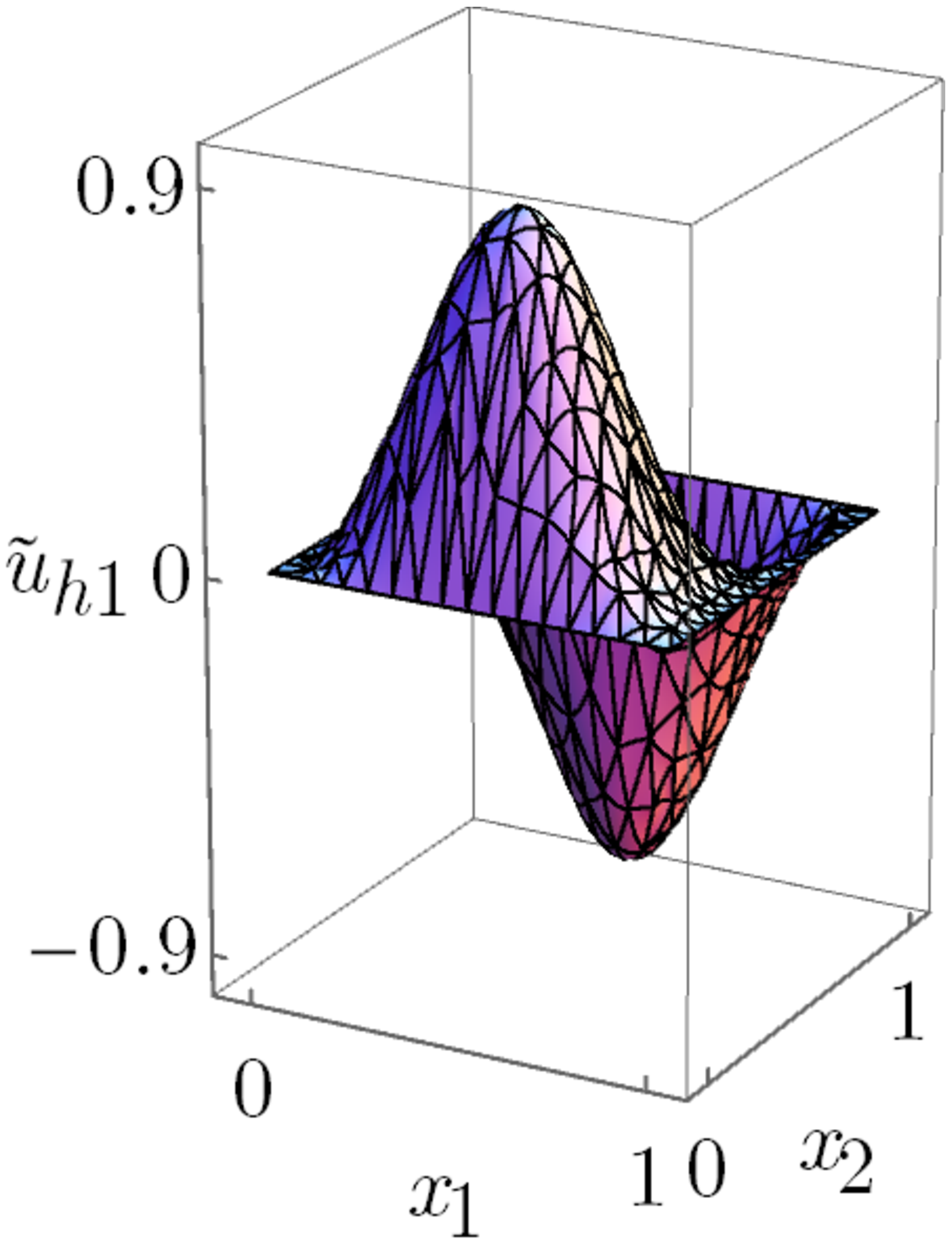}  \quad
\includegraphics[width=1.5in]{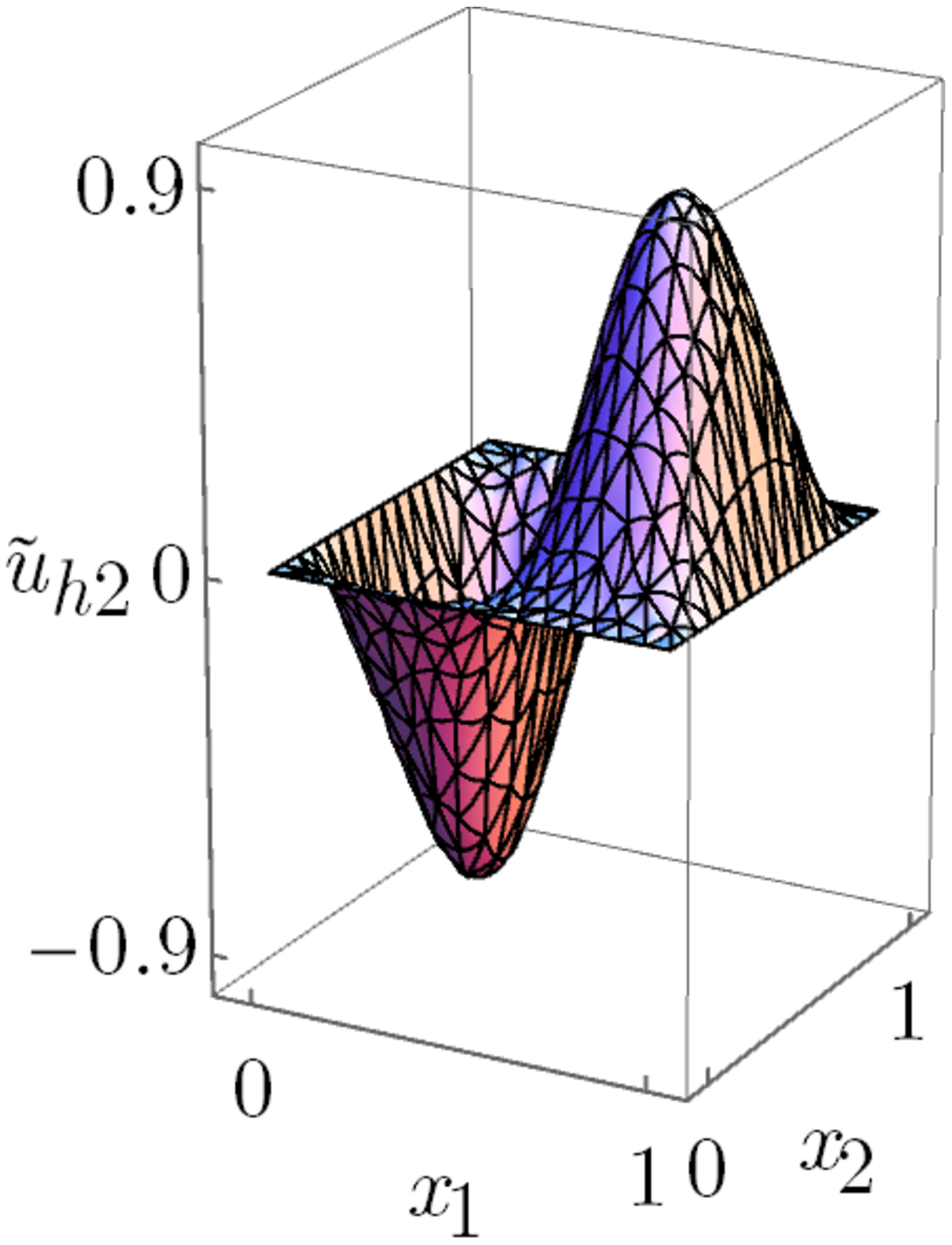} \quad
\includegraphics[width=1.5in]{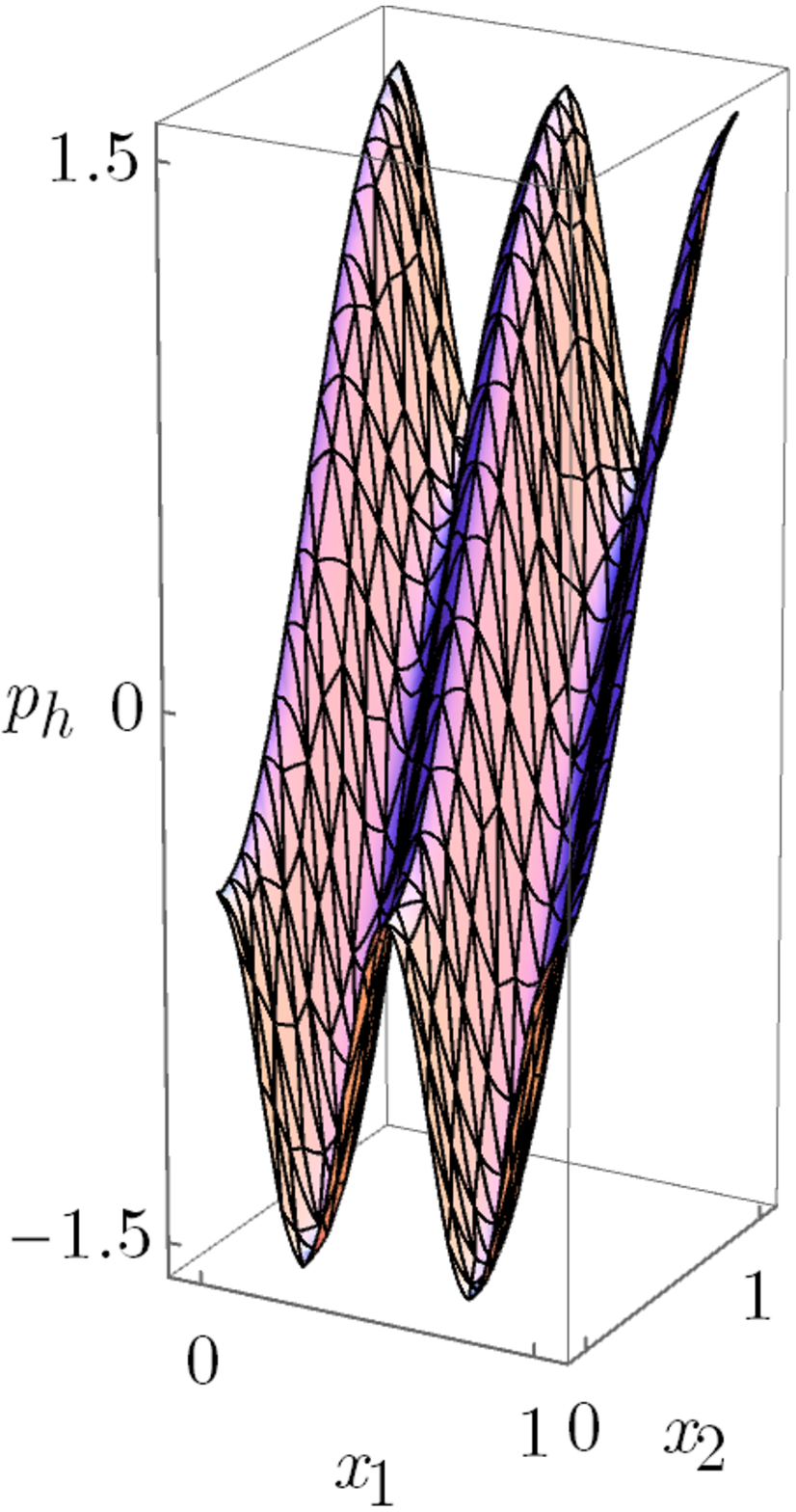}
\caption{Stereographs of $\til u_{h1}^n, \til u_{h2}^n$ and $p_{h}^n$ of Scheme(2,2,$10^{-2}$), $\nu=10^{-4}$, $t^n=1$, $h=1/16$, $\Delta t=h^2$.}
\label{fig:stereo_p2p2}
\end{figure}

Figures \ref{fig:stereo_p2p1}--\ref{fig:stereo_p2p2} show the stereographs of $\til u_{h1}^n$, $\til u_{h2}^n$ and $p_{h}^n$ when $\nu=10^{-4}$, $t^n=1$, $h=1/16$, and $\Delta t=1/256$.
In Figure \ref{fig:stereo_p2p1},
we observe unnatural oscillation in the velocity of Scheme(2,1,0),
which corresponds to the large error $E_{\ell^2(H^1_0)}(u)$ (\mtwo) in Figure \ref{fig:nu4HEgraph}.
For the solutions of Scheme(1,1,$10^{-1}$) and  Scheme(2,2,$10^{-2}$),
we have no significant oscillation.

\section{Concluding remarks}
\label{sec:conclusions}

We developed and analyzed a pressure-stabilized projection LG scheme for the Oseen problem.
The scheme inherits the advantages of computational efficiency from the projection/LG combined scheme.
Here, since the viscosity term in the Oseen equations is Laplacian, the matrices can be decoupled into each component of the velocity and the pressure.
We used the equal-order pair of the finite element for the velocity and pressure.
Approximability of the pressure space is actually used in the velocity error estimate for small viscosity.
We also derived the pressure error estimate, where the constant depends on the viscosity. 
Numerical results showed higher accuracy of the equal-order element than the Taylor--Hood element for small viscosity.

\appendix
\section{Appendix}

\subsection{Proof of Lemma \ref{lemm:dtR11R12}}
\label{subsec:proofoflemmadtR11R12}

\begin{proof}
We prove \eqref{eq:dtR11}.
We apply Taylor's theorem
\[
	g(1) = g(0) + g'(0) + \frac{1}{2} g''(0) + \int_0^1 \frac{(1-s)^2}{2} g'''(s) ds
\]
to $g(s) = u(y_j(x,s), t_j(s))$ 
for $j=1,2$,
where 
\[
	(y_j(x,s), t_j(s)) = (x - s w^{n-j}(x) \Delta t, t^{n-j+1} - s \Delta t)\]
so that $g(1) = (u^{n-j} \circ X_1(w^{n-j}))(x)$,
$g(0)=u^{n-j+1}$, $g'(0) = -\Delta t D_w^{n-j} u^{n-j+1}$
and $g''(0) = \Delta t^2 (D_w^{n-j})^2 u^{n-j+1}$.
Here we have used the following material derivative
\[
	D_w^{n} := \frac{\partial}{\partial t} + (w^{n} \cdot \nabla).
\]
We have
\begin{align}
	&R_{11}^n - R_{11}^{n-1} \notag \\ 
	= & 
	D_w^{n-1} u^n - \frac{u^n - u^{n-1} \circ X_1(w^{n-1})}{\Delta t} 
	- \biggl[ D_w^{n-2} u^{n-1} - \frac{u^{n-1} - u^{n-2} \circ X_1(w^{n-2})}{\Delta t} \biggr] \notag \\
	& + [(w^n - w^{n-1})\cdot \nabla] u^n - [(w^{n-1} - w^{n-2})\cdot \nabla] u^{n-1} \notag \\
	= & \frac{\Delta t}{2} (D_w^{n-1})^2 u^n
	- \Delta t^2 \int_0^1 \frac{(1-s)^2}{2} ( D_w^{n-1} )^3 u(y_1(\cdot,s), t_1(s)) ds \notag \\
	&- \frac{\Delta t}{2} ( D_w^{n-2})^2 u^{n-1}
	+ \Delta t^2 \int_0^1 \frac{(1-s)^2}{2} (D_w^{n-2})^3 u(y_2(\cdot,s), t_2(s)) ds \notag \\
	& + [(w^n - w^{n-1})\cdot \nabla] u^n - [(w^{n-1} - w^{n-2})\cdot \nabla] u^{n-1}. \label{eq:dtR11est1}
\end{align}
We denote the $j$-th term in \eqref{eq:dtR11est1} by $R^n_{11j}$.
We use \eqref{eq:dthalfest} to have the following bound:
\begin{align*}
	& \| R^n_{111} - R^n_{113} \|_0 
	\leq  \frac{\Delta t^{3/2}}{2} \biggl\| \frac{\partial}{\partial t} \biggl\{ \Bigl[ \frac{\partial}{\partial t} + (w(\cdot - \Delta t) \cdot \nabla) \Bigr]^2 u  \biggr\} \biggr\|_{L^2(t^{n-1},t^n; L^2)} \\
	\leq & c (\|w\|_{W^{1,\infty}(L^\infty)}) \Delta t^{3/2} \| u \|_{Z^3(t^{n-1}, t^n)}.
\end{align*}

For the second term,
\begin{align*}
	\|R_{112}^n\|_0
	& \leq \Delta t^2 \int_0^1 \biggl\| \frac{(1-s)^2}{2} \Bigl( \frac{\partial}{\partial t} + w^{n-1} \cdot \nabla \Bigr)^3 u(y_1(\cdot,s), t_1(s)) \biggr\|_0 ds \\
	& \leq c(\|w^{n-1}\|_{0,\infty}) \Delta t^{3/2} \| u \|_{Z^3(t^{n-1}, t^n)},
\end{align*}
where we have used the transformation of independent variables from $x$ to $y$ and $s$ to $t$, and the estimate $|\det(\partial x/\partial y)| \leq 2$ by virtue of Lemma \ref{lemm:bijectiveMix}-(\ref{item:jacobiest}).
By the same argument
\[
	\|R_{114}^n\|_0 \leq c(\|w^{n-2}\|_{0,\infty}) \Delta t^{3/2} \| u \|_{Z^3(t^{n-2}, t^{n-1})}.
\]

For the fifth and the sixth term,
\begin{align*}
	& R_{115}^n - R_{116}^n \\
	= & \int_{t^{n-1}}^{t^n} \frac{\partial }{\partial t} \biggl\{ \Bigl[ (w(t) - w(t-\Delta t)) \cdot \nabla \Bigr] u(t) \biggr\} dt \\
	=&  \int_{t^{n-1}}^{t^n} \Bigl[ (w_t(t) - w_t(t-\Delta t)) \cdot \nabla \Bigr] u(t) dt
	+  \int_{t^{n-1}}^{t^n} \Bigl[ (w(t) - w(t-\Delta t)) \cdot \nabla \Bigr] u_t(t) dt \\
	= & \int_{t^{n-1}}^{t^n} \Bigl[ \int_{t-\Delta t}^t w_{tt}(s) ds \cdot \nabla \Bigr] u(t) dt
	+  \int_{t^{n-1}}^{t^n} \Bigl[ \int_{t-\Delta t}^t w_t(s) ds \cdot \nabla \Bigr] u_t(t) dt,
\end{align*}
where $\Phi_t = \frac{\partial \Phi}{\partial t}$.
Thus,
\begin{align*}
	\| R_{115}^n - R_{116}^n \|_0
	\leq & c(\| w \|_{W^{2,\infty}(L^\infty)}) \Delta t^{3/2} \| u \|_{H^1(t^{n-1}, t^n; H^1)}.
\end{align*}
Gathering these estimates, from \eqref{eq:dtR11est1}, we have
the conclusion \eqref{eq:dtR11}.

We prove \eqref{eq:dtR12}.
First, we decompose the residual function as follows:
\begin{align*}
	&\frac{1}{\Delta t}  [ ( v^{n} - v^{n-1} \circ X_1^{n-1} ) -  ( v^{n-1} - v^{n-2} \circ X_1^{n-1} ) ] \\
	&- \frac{1}{\Delta t}  [v^{n-2} \circ X_1^{n-1} - v^{n-2} \circ X_1^{n-2} ]
	=: R^n_{41} - R^n_{42}.
\end{align*}
Using $y(x,t,\tau) := x - w^{n-1}(x) (t- \tau)$, we have
\begin{align*}
	R_{41}(x) 
	= & 
	\frac{1}{\Delta t} \int_{t^{n-1}}^{t^n} \Bigl[ v_t(x,t) - v_t(x-w^{n-1}(x)\Delta t, t - \Delta t) \Bigr] dt \\
	= & \frac{1}{\Delta t} \int_{t^{n-1}}^{t^n} \int_{t - \Delta t}^{t} D_w^{n-1} v_t (y(x,t,\tau),t) d\tau dt,
\end{align*}
and thus
\begin{equation}
	\| R_{41} \|_0 \leq c \Delta t^{1/2} ( \|v\|_{H^2(t^{n-2},t^n; L^2)} + \|w^{n-1}\|_{0,\infty} \| v \|_{H^1(t^{n-2}, t^n); H^1} ).
	\label{eq:r121complete}
\end{equation}
Here, we have again used the transformation of independent variables from $x$ to $y$ and the estimate $|\det(\partial x/\partial y)| \leq 2$.

The bound for $R^n_{42}$ is easily obtained by Lemma \ref{lemm:twofunc} with $q=2$, $p=\infty$, $p'=1$, $v = v^{n-2}$ and $w_i = w^{n-i}$, $i=1,2$:
\begin{equation}
\begin{split}
	\| R^n_{42} \|_0 &\leq c \| \nabla v^{n-2} \|_{0} \| w^{n-1} - w^{n-2} \|_{0,\infty}
	\leq c \Delta t \| \nabla v^{n-2} \|_{0} \|w\|_{W^{1,\infty}(L^\infty)}.
\end{split}
\label{eq:r122complete}
\end{equation}
The conclusion \eqref{eq:dtR12} follows from \eqref{eq:r121complete} and \eqref{eq:r122complete}.
\end{proof}


\paragraph{Acknowledgment}
The author was supported by Japan Society for the Promotion of Science under KAKENHI Grant Number JP18K13461.


\small

\end{document}